\newcommand{\R}{\mathbb{R}}
\newcommand{\N}{\mathbb{N}}
\newcommand{\Z}{\mathbb{Z}}
\newcommand{\cov}{\normalfont \text{Cov}}
\newcommand{\vol}{\normalfont \text{Vol}}
\newcommand{\End}{\normalfont \text{End}}
\newcommand{\pr}{\normalfont \text{pr}}
\newcommand{\GL}{\normalfont \text{GL}}
\newcommand{\SL}{\normalfont \text{SL}}
\newcommand{\SO}{\normalfont \text{SO}}
\newcommand{\Ad}{\normalfont \text{Ad}}
\newcommand{\Supp}{\normalfont \text{Supp}}
\newcommand{\s}{\normalfont \textbf{s}}
\newcommand{\st}{\sqrt{t}}
\newcommand{\e}{\epsilon}
\newcommand{\de}{\delta}
\newcommand{\La}{\Lambda}
\newcommand{\ga}{\gamma}
\newcommand{\Ga}{\Gamma}
\newcommand{\Sph}{\mathbb{S}}
\newcommand{\Span}{\normalfont \text{Span}}
\newtheorem{theorem}{Theorem}[section]
\newtheorem{lemma}[theorem]{Lemma}
\newtheorem{proposition}[theorem]{Proposition}
\newtheorem{corollary}[theorem]{Corollary}
\newtheorem{claim}{Claim}
\theoremstyle{definition}
\theoremstyle{remark}
\newtheorem{remark}[theorem]{Remark}
\numberwithin{equation}{section}
\title{Equidistribution of lattice orbits in the space of homothety classes of rank $2$ sublattices in $\R^3$}
\author{Michael Bersudsky and Hao Xing}
\begin{document}
\date{}
\maketitle
\begin{abstract}
We study the distribution of orbits of a lattice $\Gamma\leq\SL(3,\R)$ in the moduli space $X_{2,3}$ of covolume one rank-two discrete subgroups in $\R^3$. Each orbit is dense, and our main result is the limiting distribution of these orbits with respect to norm balls, where the norm is given by the sum of squares. Specifically, we consider  $\Gamma_T=\{\gamma\in\Gamma:\|\gamma\|\leq T\}$ and show that, for any fixed $x_0\in X_{2,3}$ and $\varphi\in C_c(X_{2,3})$, 
$$\lim_{T\to\infty}\frac{1}{\#\Gamma_T}\sum_{\gamma\in\Gamma_T}\varphi(x_0\cdot\gamma)=\int_{X_{2,3}}\varphi(x)d \tilde\nu_{x_0}(x),$$
where $\tilde\nu_{x_0}$ is an explicit probability measure on $X_{2,3}$ depending on $x_0$. To prove our result, we use the duality principle developed by Gorodnik and Weiss which recasts the above problem into the problem of computation of certain volume estimates of growing skewed balls in $H$ and proving ergodic theorems of the left action of the skewed balls on $\SL(3,\mathbb{R})/\Gamma$. The ergodic theorems are proven by applying theorems of Shah building on the linearisation technique. The main contribution of the paper is the application of the duality principle in the case where $H$ has infinitely many non-compact connected components.

\end{abstract}

\section{Introduction}





\subsection{The problem set-up and statement of the main result}


In this paper we study the asymptotic distributional properties of the action of lattice subgroups of the special linear group on a moduli space of discrete subgroups of the Euclidean space. Such a research direction is in particular motivated by \cite{Sargent2017DynamicsOT} and the more recent work \cite{gorodnik2022stationary}  studying random walks on such spaces. We will study the dynamics with respect to the action of growing norm balls.

We proceed to describe our specific setting. We say that $\La\subset\R^3$ is a 2-lattice, if $\La$ is the $\Z$-Span of a tuple of linearly independent vectors  ${v_1,v_2}\in\R^3$, that is, $$\Lambda:=\text{Span}_{\Z}\{v_1,v_2\}.$$ For $\La$ we let $$\cov(\Lambda):=\sqrt{\det(\langle v_i,v_j\rangle)},$$
which is the area of a fundamental parallelogram of $\Lambda$. 
The \emph{unimodular} 2-lattices $\La\subset\R^3$ are those $\La$ with $\cov(\La)=1$. 
 Next, we recall the definition of the shape of $\La$, a notion that was extensively studied in e.g \cite{emss, AES_lat}, which refined the classical work of Schmidt \cite{Schmidt1998TheDO}.
We view $\R^3$ as row vectors, and for a unimodular 2-lattice $\La\subset\R^3$, let $w\in\mathbb{S}^2$ such that $w\perp \La$. We choose $\rho \in \SO(3,\R)$ such that $w \rho=e_3:=(0,0,1)$, and we define 
\begin{equation}\label{definition of shape}
 \s(\La,w):=\La \rho\begin{bmatrix}
\SO(2,\R) & 0 \\
0 &  1 
\end{bmatrix}, 
\end{equation}
which is independent of the choice of $\rho$. By identifying the unimodular 2-lattices $\La\subset {e}_3^\perp$ with $X_2$, we identify $\s(\Lambda,w)$ as a point in $X_2\slash \SO(2,\R)$. Note that in general $$\s(\La,w)\neq\s(\La,-w).$$
\begin{remark}
 A more intrinsic definition of a shape of a discrete subgroup, see e.g. \cite{Sargent2017DynamicsOT, Schmidt1998TheDO}, is defined by the equivalence class under the equivelence relation of scaling and rotations. When defining shape in this way, one gets a point in $X_2/\text{O}(2,\R)$, and it captures slightly less information. Our definition \eqref{definition of shape} is mainly motivated by the definition in \cite{emss, AES_lat}, which produces a point in the more familiar space $X_2/\SO(2,\R)$.
\end{remark}
We consider $$X_{2,3}:=\{(\Lambda,w):\cov(\La)=1,w\in\mathbb{S}^2,w\perp\La\},$$ and note that $\s$ defined in \eqref{definition of shape} yields a map $$\s:X_{2,3}\to X_2/\SO(2,\R).$$

We define a right $\SL(3,\R)$ action on $X_{2,3}$ using the usual right matrix multiplication by \begin{equation} \label{action of SL on X_2,3}
   (\La,w). g:=\left(\frac{\La g}{\sqrt{\cov(\La g)}}, \frac{w (^tg^{-1})}{\|w (^tg^{-1})\|} \right),~g\in \SL(3,\R),~w\in \mathbb{S}^2, 
\end{equation}  where $\|\cdot\|$ is the usual Euclidean norm. 

It turns out that for each lattice subgroup $\Ga\leq \SL(3,\R)$ and each $x_0\in X_{2,3}$, the orbit $x_0\Ga$  is dense in $X_{2,3}$. This follows for example by \cite{Sargent2017DynamicsOT}, and a different proof of this fact is obtained by applying a duality argument. As we explain below $X_{2,3}$ is a homogeneous space identified with $H\backslash G$, and by Proposition 1.5 of \cite{Dani1980OrbitsOE}, it follows that all $H$-orbits in $G/\Ga$ are dense. 

Our goal in this paper will be to compute the limiting distribution of $\Ga$ orbits in $X_{2,3}$ with respect to growing Hilbert-Schmidt norm balls. Namely, let $\|g\|=\sqrt{\text{Trace}(^tgg)}=\sqrt{\sum_{ij}(g_{ij}^2)}$ be the Hilbert-Schmidt norm of $g\in\SL(3,\R)$, let $$\Ga_T:=\{\ga \in \Ga:\|\ga\|\leq T\},$$ and for $$x_0:=(\La_0,w_0)\in X_{2,3},$$ consider the probability measures $$\mu_{T,x_0}:=\frac{1}{\#\Ga_T}\sum_{\ga\in \Ga_T}\delta_{x_0.\gamma},~T>0.$$
Our main result is that the probability measures $\mu_{T,x_0}$ converge as $T\to\infty$ to a certain probability measure $\tilde\nu_{x_0}$ depending on ${x_0}\in X_{2,3}$ which we describe now. We observe that $X_{2,3}$ has a natural projection to $\mathbb{S}^2$ defined by $$\pi_{\perp}(\La,w):=w,$$ which endows $X_{2,3}$ with a fiber-bundle structure, where the fibers are isomorphic to $$X_2:=\SL(2,\Z)\backslash\SL(2,\R).$$ 
To define the measure $\tilde\nu_{x_0}$, we define a measure $\mu_{x_0,w}$ on each fiber $\pi_{\perp}^{-1}(w)$, and we integrate those measures by the unique rotation invariant probability measure $\mu_{\Sph^2}$ on $\Sph^2$. We note that the measures $\mu_{x_0,w}$ have a slightly surprising form; they are a combination of a $\SL(2,\R)$-invariant measure with a density involving the Hilbert-Schmidt norm of operators which we define now.

For an operator $T$ from a hyperplane $U\subset \R^3$ to another hyperplane $V\subset \R^3$, we define
\begin{equation}\label{defining hilbert-schmits for op. from hyp. to hyp.}
    \|T\|^2_{\text{HS}}:=\|Tu_1\|^2+\|Tu_2\|^2,
\end{equation}
where $\{u_1,u_2\}$ is an orthonormal basis of $U$, and where the norm on the right hand side is the usual Euclidean norm on $\R^3$. We note that this norm is independent of the choice of an orthonormal basis $\{u_1,u_2\}$. Moreover, this norm is bi-$\SO(3,\R)$ invariant in the following sense. If $\rho_1,\rho_2\in\SO(3,\R)$, then $\rho_2 \circ T \circ \rho_1:\rho_1^{-1}U\to \rho_2V$ satisfies $$\|\rho_2 \circ T \circ \rho_1\|_{\text{HS}}=\|T\|_{\text{HS}}.$$
For an ordered tuple of linearly independent vectors $B=(u_1,u_2)\in\R^3\times\R^3$ we define the linear map $T_B:\text{Span}_{\R}  \{e_1,e_2\} \to\text{Span}_{\R}\{u_1,u_2\}$, by sending $e_1\mapsto u_1$ and $e_2\mapsto u_2$. Now fix unimodular 2-lattice $\Lambda_0\subset\R^3$, and let $\mathscr{B}_0$ be an ordered tuple of linearly independent vectors forming a $\Z$-basis for $\La_0 $. We define for an arbitrary unimodular 2-lattice $\La\subset\R^3$, \begin{equation}\label{defining psi_La_0}
    \Psi_{\La_0} (\La):=\sum_{\Span_\Z{\mathscr{B}}=\La}\frac{1}{\|T_{\mathscr{B}}\circ T_{\mathscr{B}_0}^{-1}\|_{\text{HS}}^4}.
\end{equation}
We note that $\Psi_{\La_0}$ is independent of the choice of basis $\mathscr{B}_0$ (see \eqref{intrinsic interpretation of the sum}), and we observe that by bi-$\SO(3,\R)$ invariance of the Hilbert-Schmidt norm that the values of the function $\Psi_{\La_0}(\La)$ only depends on the shapes of $\La$ and $\La_0$.

By identifying $\pi_\perp^{-1}(e_3)$ with $\SL(2,\Z)\backslash \SL(2,\R)$, we obtain the $\SL(2,\Z)\backslash \SL(2,\R)$ invariant measure $\mu_{e_3}$ supported on $\pi_\perp^{-1}(e_3)$ scaled such that the measure $\mu_{x_0,e_3}$ defined by $$\mu_{x_0,e_3}(f):=\int_{\pi^{-1}_\perp(e_3)}f(\La,e_3)\Psi_{\La_0}(\La)d\mu_{e_3}(\La),~f\in C_c(X_{2,3}),$$ is a probability measure. Then, we obtain the measure supported on $\pi_\perp^{-1}(w), \text{ for } w\in \mathbb{S}^2$, by choosing $\rho_w\in \SO(3,\R)$ such that $w=e_3 \rho_w$, and defining,$$\mu_{x_0,w}:=(\rho_w)_*\mu_{x_0,e_3},$$ which is the push-forward of the right translation by $\rho_w$ via the right action of $\SO(3,\R)$ on $X_{2,3}$ defined in \eqref{action of SL on X_2,3}. Note that $\mu_{x_0,w}$ is independent of the choice of $\rho_w$. Finally, we define $\tilde\nu_{x_0}$ by $$\tilde\nu_{x_0}(f)=\int_{\mathbb{S}^2}\mu_{x_0,w}(f)d\mu_{\mathbb{S}^2}(w).$$   

The main goal of this paper is to prove the following:

\begin{theorem}\label{equidistribution result on  G mod H}
 Let $\Ga\leq\SL(3,\R)$ be a lattice and fix ${x_0}\in X_{2,3}$. Then, $\mu_{T,{x_0}}$ converges in the weak-* topology to $\tilde\nu_{x_0}$ as $T\to\infty$. In other words,
 for all $f\in C_c(X_{2,3})$, we have 
\begin{equation}
\lim_{T
\to\infty} \frac{1}{\#\Gamma_{T}}\sum_{\gamma\in \Ga_T}f(x_{0}\cdot \gamma)=\int_{X_{2,3}}f(x)d \tilde\nu_{x_{0}}(x).
\end{equation}

\end{theorem}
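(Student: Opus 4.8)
The plan is to follow the Gorodnik–Weiss duality recipe, which converts the counting/equidistribution problem on $X_{2,3} \cong H\backslash G$ (where $G = \SL(3,\R)$) into a volume-asymptotics problem together with an ergodic theorem for the left action of $H$ on $G/\Ga$. First I would set up the homogeneous-space picture explicitly: identify $X_{2,3} = H\backslash G$, where $H$ is the stabilizer in $G$ of the fixed point $x_0 = (\La_0, w_0)$; concretely $H$ stabilizes the plane $\La_0\otimes\R$ and the line $\R w_0$, so up to conjugation $H$ is (a finite-index-related version of) $\SL(2,\R) \ltimes \R^2$ sitting inside the parabolic preserving a $2$-plane, intersected with the condition fixing a transversal line — crucially $H$ has \emph{infinitely many} non-compact connected components, which is the feature that makes the standard recipe non-routine. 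I would then pull back the Hilbert–Schmidt norm ball $\{g : \|g\| \le T\}$ and push it to $H\backslash G$, writing the counting measure $\mu_{T,x_0}$ in terms of the $G$-invariant measure and the coset structure.

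Next I would carry out the volume computation: using a Cartan-type / $KAK$ decomposition adapted to $H$, express $\vol(H \cap B_T^G \cdot g)$ and more importantly the "skewed" volumes $\vol(H_T g)$ where $H_T = H \cap B_T$, and determine the leading-order asymptotics as $T\to\infty$, keeping track of the dependence on the transversal coordinate (the direction $w$ on $\Sph^2$ and the shape in $X_2$). This is precisely where the density $\Psi_{\La_0}(\La)$ and the measure $\mu_{\Sph^2}$ should emerge: the sum over $\Z$-bases $\mathscr B$ of $\La$ with weight $\|T_{\mathscr B}\circ T_{\mathscr B_0}^{-1}\|_{\mathrm{HS}}^{-4}$ is exactly the contribution one gets by decomposing the norm ball over the (infinitely many) components of $H$ and integrating out the radial direction, the exponent $4$ reflecting the dimension count of $H$ relative to the quadratic form defining the norm. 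I would verify that these skewed balls $\{H_T\}$ form a well-rounded (Lipschitz / boundary-regular) family in the sense required by Gorodnik–Weiss, uniformly in the relevant parameters — this uniformity across infinitely many components is the technical heart.

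Then I would invoke the equidistribution input: the family $H_T$ (suitably normalized) equidistributes in $G/\Ga$ with respect to Haar measure, which follows from Shah's theorems on limiting distributions of translates of unipotent-flow-invariant measures via the linearization technique, since $H$ is generated by unipotents (the $\SL(2,\R)$ factor and the $\R^2$ unipotent part) — one applies this component-by-component and sums, using the volume asymptotics of the previous step to control the weighting and the tails. Combining the duality identity with this ergodic theorem gives $\mu_{T,x_0} \to \tilde\nu_{x_0}$ in weak-$*$, after identifying the resulting limit measure on $H\backslash G$ with the explicitly described $\tilde\nu_{x_0}$ (fiberwise $\SL(2,\R)$-invariant measure times $\Psi_{\La_0}$, integrated against $\mu_{\Sph^2}$).

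The main obstacle I expect is the volume estimate together with the well-roundedness verification in the presence of infinitely many non-compact components of $H$: one must show the skewed balls $H_T$ grow at a uniform rate across components, that the boundary contributions are negligible uniformly, and that the sum over components converges and reproduces exactly $\Psi_{\La_0}$ — in particular showing $\Psi_{\La_0}(\La) < \infty$ (convergence of the series in \eqref{defining psi_La_0}, which needs the exponent $4 > \dim$-type threshold) and that it is locally bounded enough to define a genuine measure. A secondary difficulty is bookkeeping the non-unimodularity / modular function issues on $H$ (it is not unimodular, being essentially $\SL(2,\R)\ltimes\R^2$), which affects how the invariant measure on $H\backslash G$ disintegrates and must be tracked carefully to land on the stated normalization.
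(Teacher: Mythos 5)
Your overall strategy is the one the paper follows: the Gorodnik--Weiss duality recipe, volume asymptotics for the skewed balls $H_T[g_1,g_2]$ decomposed over the components of $H$ (this is exactly where the density $\Psi_{\La_0}$ and the convergence of $\sum_{\ga}\|G_1\ga H_1\|^{-4}$ for exponent $4>2$ enter, as you anticipate), verification of the volume-regularity properties D1--D2, an equidistribution theorem for the skewed balls acting on $G/\Ga$, and a final unfolding/normalization step. However, there is a genuine gap in your ergodic input, and it traces back to a mischaracterization of $H$. The stabilizer is
\[
H=\left\{\begin{bmatrix} t^{-1/2}\ga & 0\\ (x,y) & t\end{bmatrix}: t>0,\ \ga\in\SL(2,\Z),\ (x,y)\in\R^2\right\}=U\rtimes\bigl(\SL_{2,1}(\Z)\times A\bigr);
\]
the $\SL(2)$-part is the \emph{discrete} group $\SL(2,\Z)$ (forced by requiring the lattice $\La_0$ itself, not merely its plane, to be preserved up to homothety), and the extra factor is the one-parameter homothety group $A$. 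Hence the identity component of $H$ is the three-dimensional solvable group $AU$, and $H$ is \emph{not} ``generated by unipotents (the $\SL(2,\R)$ factor and the $\R^2$ part)'' as you assert --- there is no $\SL(2,\R)$ factor. So equidistribution of the skewed balls in $G/\Ga$ is not a direct application of theorems on translates of unipotent-invariant measures, and this step cannot be waved through as you propose.

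What is actually needed to close this gap (and what the paper does): first prove that every weak-$*$ limit of the normalized integrals over the components $V_{\ga,T}[g_1,g_2]$ is invariant under $U$ alone, via a symmetric-difference estimate for the elliptical cross-sections of the skewed balls; then apply Shah's dichotomy for polynomial trajectories to the maps $v\mapsto \ga a_t^{-1}u_v^{-1}g_1$, ruling out the degenerate alternative by an expansion lemma for the $AU$-action, obtained by factoring $a_t^{-1}u_v^{-1}$ through two embedded copies of $\SL(2,\R)$ and using the weight structure of $\SL(2,\R)$-representations (here one uses that $AU$ is epimorphic, i.e.\ the two copies generate $G$). This yields non-escape of mass and, combined with Ratner and Mozes--Shah, $G$-invariance of the limit; only then does duality apply. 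A secondary, fixable inaccuracy: the non-unimodularity of $H$ comes from $A$ acting on $U$ (the group $\SL(2,\R)\ltimes\R^2$ you invoke is in fact unimodular), and the modular bookkeeping is handled by an explicit section $Y=\mathcal F_2\cdot\SO(3,\R)$ with $dm=d\mu\,d\nu_Y$.
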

Now consider the map $$(\s\times\pi_\perp):X_{2,3}\to  X_2\slash \SO(2,\R)\times \Sph^2,$$ defined by $$(\s\times\pi_\perp)(\La,w):=(\s(\La,w),w).$$ We observe that that the push-forward of $\nu_{x_0}$ by $(\s\times\pi_\perp)$ is given by $$(\s\times\pi_\perp)_*(\nu_{x_0})=\s_*\mu_{x_0,e_3}\otimes\mu_{\mathbb{S}^2}.$$
\begin{corollary}
     Let $\Ga\leq\SL(3,\R)$ be a lattice and fix ${x_0}\in X_{2,3}$. Then, the probability measures on $X_2\slash \SO(2,\R)\times\Sph^2  $ given by $$(\s\times\pi_\perp)_*\mu_{T,{x_0}}=\frac{1}{\#\Ga_T}\sum_{\|\ga\|\leq T}\delta_{(\s\times\pi_\perp)({x_0}.\gamma)},~T>0,$$ 
    converge in the weak-* topology to the probability measure $(\s\times\pi_\perp)_*(\nu_{x_0})=\s_*\mu_{x_0,e_3}\otimes\mu_{\mathbb{S}^2}$ as $T\to\infty$.
\end{corollary}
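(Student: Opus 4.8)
The plan is to obtain the corollary from Theorem~\ref{equidistribution result on  G mod H} by pushing the probability measures $\mu_{T,x_0}$ forward along the continuous map $\s\times\pi_\perp$ and then identifying the limit. First I would record that the limit measure $\tilde\nu_{x_0}$ of Theorem~\ref{equidistribution result on  G mod H} (the measure denoted $\nu_{x_0}$ in the statement of the corollary) is itself a probability measure on $X_{2,3}$: each fibre measure $\mu_{x_0,w}$ is a probability measure by construction and $\mu_{\Sph^2}$ is a probability measure, so $\tilde\nu_{x_0}(1)=\int_{\Sph^2}\mu_{x_0,w}(1)\,d\mu_{\Sph^2}(w)=1$. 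The step I expect to need a short extra argument is that $\s\times\pi_\perp$ is not obviously proper, so continuity alone does not immediately carry weak-* convergence of $C_c(X_{2,3})$-functionals down to $X_2/\SO(2,\R)\times\Sph^2$.

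To get around this I would first upgrade the convergence of Theorem~\ref{equidistribution result on  G mod H} from $C_c(X_{2,3})$ to $C_b(X_{2,3})$, which is legitimate precisely because the $\mu_{T,x_0}$ and $\tilde\nu_{x_0}$ are all probability measures, so no mass escapes to infinity: given $h\in C_b(X_{2,3})$ and $\e>0$, pick a compact $K\subset X_{2,3}$ with $\tilde\nu_{x_0}(K)>1-\e$ and $\psi\in C_c(X_{2,3})$ with $\mathbbm{1}_K\le\psi\le 1$, and write $h=h\psi+h(1-\psi)$ with $h\psi\in C_c(X_{2,3})$; then $\limsup_{T\to\infty}|\mu_{T,x_0}(h)-\tilde\nu_{x_0}(h)|\le 2\|h\|_\infty(1-\tilde\nu_{x_0}(\psi))\le 2\|h\|_\infty\e$, and $\e$ was arbitrary. (Alternatively one can verify directly that $\s\times\pi_\perp$ is proper, using that a point of $X_{2,3}$ escapes to infinity exactly when the shortest nonzero vector of the underlying unimodular $2$-lattice shrinks to $0$, equivalently when its shape leaves every compact subset of $X_2/\SO(2,\R)$.) With convergence against $C_b(X_{2,3})$ in hand, for any $g\in C_c\bigl(X_2/\SO(2,\R)\times\Sph^2\bigr)$ the composite $g\circ(\s\times\pi_\perp)$ lies in $C_b(X_{2,3})$, so
$$(\s\times\pi_\perp)_*\mu_{T,x_0}(g)=\mu_{T,x_0}\bigl(g\circ(\s\times\pi_\perp)\bigr)\xrightarrow[T\to\infty]{}\tilde\nu_{x_0}\bigl(g\circ(\s\times\pi_\perp)\bigr)=(\s\times\pi_\perp)_*\tilde\nu_{x_0}(g),$$
which is the asserted weak-* convergence of $(\s\times\pi_\perp)_*\mu_{T,x_0}$ once the limit is identified.

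Finally I would invoke the identity $(\s\times\pi_\perp)_*\tilde\nu_{x_0}=\s_*\mu_{x_0,e_3}\otimes\mu_{\Sph^2}$ recorded just before the corollary; for completeness, here is the verification. Unwinding the definition of $\tilde\nu_{x_0}$,
$$\tilde\nu_{x_0}\bigl(g\circ(\s\times\pi_\perp)\bigr)=\int_{\Sph^2}\int_{\pi_\perp^{-1}(w)}g\bigl(\s(\La,w),w\bigr)\,d\mu_{x_0,w}(\La)\,d\mu_{\Sph^2}(w).$$
Since $\mu_{x_0,w}=(\rho_w)_*\mu_{x_0,e_3}$ and the right $\SO(3,\R)$-action defined in \eqref{action of SL on X_2,3} sends $(\La,e_3)$ to $(\La\rho_w,w)$, the inner integral equals $\int_{\pi_\perp^{-1}(e_3)}g\bigl(\s(\La\rho_w,w),w\bigr)\,d\mu_{x_0,e_3}(\La)$. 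From \eqref{definition of shape} one has the equivariance $\s(\La\rho,w\rho)=\s(\La,w)$ for $\rho\in\SO(3,\R)$: if $\rho'\in\SO(3,\R)$ sends $w\rho$ to $e_3$, then $\rho\rho'$ sends $w$ to $e_3$, and both $\s(\La\rho,w\rho)$ and $\s(\La,w)$ are computed by \eqref{definition of shape} as the right $\SO(2,\R)$-orbit of $\La\rho\rho'$, hence agree. Thus $\s(\La\rho_w,w)=\s(\La\rho_w,e_3\rho_w)=\s(\La,e_3)$ does not depend on $w$, so the inner integral is $\int_{\pi_\perp^{-1}(e_3)}g\bigl(\s(\La,e_3),w\bigr)\,d\mu_{x_0,e_3}(\La)=\int g(\xi,w)\,d(\s_*\mu_{x_0,e_3})(\xi)$, and integrating this against $\mu_{\Sph^2}$ (Fubini) gives $(\s_*\mu_{x_0,e_3}\otimes\mu_{\Sph^2})(g)$. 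As flagged above, the only genuine (though mild) subtlety is the tightness step of the second paragraph, needed to push the convergence through the non-proper direction of $\s\times\pi_\perp$; everything else is direct bookkeeping with the definitions.
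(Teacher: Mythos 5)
Your proposal is correct and follows the same route the paper intends: the paper states the corollary without proof as an immediate consequence of Theorem \ref{equidistribution result on  G mod H} together with the displayed identity $(\s\times\pi_\perp)_*(\nu_{x_0})=\s_*\mu_{x_0,e_3}\otimes\mu_{\mathbb{S}^2}$, and you simply supply the routine details (pushing the weak-* convergence through $\s\times\pi_\perp$ and verifying the product form via the $\SO(3,\R)$-equivariance of $\s$). Your handling of the $C_c$ versus $C_b$ issue is sound, and as you note in the parenthetical, the quickest justification is that $\s\times\pi_\perp$ is in fact proper (the shape map $X_2\to X_2/\SO(2,\R)$ is proper since $\SO(2,\R)$ is compact), so $g\circ(\s\times\pi_\perp)\in C_c(X_{2,3})$ and the convergence applies directly.
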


\subsection{Connection to homogeneous dynamics - the duality principle}

To simplify notation, we denote $G:=\SL(3,\R)$. We note that the $G$-action on $X_{2,3}$ is transitive, and we observe that the stabilizer subgroup of the base point $(\Span_\Z\{(1,0,0),(0,1,0)\},(0,0,1))$ is  
\begin{equation}
H=\left \{\begin{bmatrix}
t^{-\frac{1}{2}}\ga & 0 \\
(x,y) &  t 
\end{bmatrix}:t> 0, \ga \in \SL(2,\Z), (x,y)\in \R^2 \right\}.
\end{equation} 
Then we obtain the identification $H \backslash G \cong X_{2,3}$.

\vspace{5mm}
This connects our problem to the study of distribution of orbits of closed subgroups in homogeneous spaces. In general, let $G$ be a connected Lie group, $\Ga$ be a lattice in $G$ and $H$ be a closed subgroup of $G$ (or more generally for two closed subgroups $H_1$ and $H_2$). The duality principle allows us to connect the equidistributional properties of the $\Ga$-orbits on $X$ to the equidistributional 
properties of the $H$-orbits in the dual action of $H$ on $G/\Ga$, see Section 1.7 of \cite{GN14duality} for an extensive exposition of the existing literature on this principle.  A general recipe for applying the duality principle was developed in \cite{GN14duality} and \cite{Gorodnik2004DistributionOL}.  Our approach in this paper uses a theorem of Gorodnik and Weiss (\cite{Gorodnik2004DistributionOL}), since it allows to prove equidistribution for every starting point. 

\vspace{5mm}
A novelty of our work is the application of the duality principle for a closed subgroup $H$ with infinitely many non-compact connected components. We note that the case when $H$ is connected algebraic was studied in great generality in \cite{GorodnikNevo2012,Gorodnik2004DistributionOL}, and the case where $H$ is discrete was studied in \cite{Oh05} and in greater generality in \cite{Gorodnik2004DistributionOL}.

To begin with, we shall study the invariant measure on $H$ and introduce an equidistribution theorem on $G/\Ga$.

\subsection{An equidistribution theorem on $G/\Ga$ with respect to the expanding skew balls in $H$}

Let us first describe the left invariant measure on $H$. Put
\begin{equation}
    \SL_{2,1}(\Z):=\begin{bmatrix}
\SL(2,\Z) & 0 \\
0 &  1 
\end{bmatrix}, A=\left \{\begin{bmatrix}
t^{-\frac{1}{2}}I_2 & 0 \\
0 & t 
\end{bmatrix}:t>0 \right\}, U:=\begin{bmatrix}
I_2 & 0 \\
\mathbb{\R}^2 & 1 
\end{bmatrix}
\end{equation}

Then the group $H$ has the decomposition:
$$H=U\rtimes(\SL_{2,1}(\Z)\times A)=U\rtimes(A \times \SL_{2,1}(\Z)).$$
Notice that any element in $H$ can be uniquely represented as $u\ga a$, where $u\in U$, $\ga \in \SL_{2,1}(\Z)$, $a \in A$ (note that $a$ commutes with $\ga$),

where the semidirect product is given by conjugation:
\begin{equation}
    (u_1,\ga_1,a_1)\cdot (u_2,\ga_2, a_2):= (u_1 \ga_1 a_1 u_2 (\ga_1 a_1)^{-1}, \ga_1 \ga_2, a_1 a_2)
\end{equation}


The left Haar measure $\mu$ on $H$ can be given by the following proposition:
\begin{proposition}\label{Haar measure on $H$}
For an integrable function $f=f(x)$ on $H$, write $x=u_v \ga a_t$, then under the parametrization

\begin{equation}
    \ga:=\begin{bmatrix}
\ga & 0 \\
0 &  1 
\end{bmatrix} \footnote{Here and henceforth we shall abuse the notation $\ga$, allowing it to represent both the $2\times 2$ and $3 \times 3$ matrices whenever its meaning is evident from the context.}, 
a_t:=\begin{bmatrix}
t^{-\frac{1}{2}}I_2 & 0 \\
0 & t 
\end{bmatrix}, 
u_v:=\begin{bmatrix}
I_2 & 0 \\
v & 1 
\end{bmatrix},   
\end{equation}
where $\ga\in \SL(2,\Z),t>0, v=(x,y)\in \R^2$, we have
\begin{align}
    \int_H f(x)d\mu(x)=& \int_{\SL_{2,1}(\Z)}\int_0^{\infty}\int_{\R^2} f(u_va_t\ga) dv \frac{1}{t^4} dt d\ga\\
    =&\sum_{\ga\in \SL_{2,1}(\Z)}\int_0^{\infty}\int_{\R^2} f(u_v a_t\ga) dv \frac{1}{t^4} dt, 
\end{align}
where the measure $dv \frac{1}{t^4} dt d\ga=dx dy \frac{1}{t^4} dt d\ga$ is left $H$-invariant.
\end{proposition}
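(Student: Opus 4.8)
The plan is to compute the left Haar measure on $H$ directly using the semidirect product structure $H = U \rtimes (A \times \SL_{2,1}(\Z))$, since for a semidirect product $N \rtimes L$ the left Haar measure is obtained by combining the Haar measure on $N$, the Haar measure on $L$, and a modular factor accounting for how $L$ acts on $N$ by conjugation. First I would recall the general principle: if $H = N \rtimes L$ with $N$ normal, and $L$ acts on $N$ with $\delta(l)$ denoting the modulus of the conjugation automorphism $n \mapsto l n l^{-1}$ on $N$ (i.e. the factor by which it scales a Haar measure $dn$ on $N$), then a left Haar measure on $H$ is $d\mu(nl) = \tfrac{1}{\delta(l)}\, dn\, d\mu_L(l)$ when we write elements in the order $n \cdot l$, provided $d\mu_L$ is left Haar on $L$ and $L$ is unimodular. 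I would verify left-invariance by hand rather than invoke a black box, since the ordering conventions (writing $x = u_v a_t \ga$ versus $u_v \ga a_t$) matter.

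**Key computational steps.**

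Second, I would identify the three pieces concretely. On $U \cong \R^2$ the Haar measure is $dv = dx\,dy$. On $A \cong (0,\infty)$ the left (and right) Haar measure is $dt/t$ (the multiplicative Haar measure), since $t \mapsto ta$ preserves $dt/t$. On $\SL_{2,1}(\Z) \cong \SL(2,\Z)$ the Haar measure is counting measure $d\ga$. The group $L = A \times \SL_{2,1}(\Z)$ is unimodular (direct product of two unimodular groups). Third, and this is the crux, I would compute the modulus $\delta$ of the conjugation action of $a_t \ga$ on $U$. Conjugating $u_v = \begin{bmatrix} I_2 & 0 \\ v & 1 \end{bmatrix}$ by $a_t\ga = \begin{bmatrix} t^{-1/2}\ga & 0 \\ 0 & t \end{bmatrix}$ gives $\begin{bmatrix} I_2 & 0 \\ t^{3/2} v \ga^{-1} & 1 \end{bmatrix}$ (up to the precise left/right placement of $\ga^{-1}$, which I would pin down carefully). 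Since $\ga \in \SL(2,\Z)$ has $\det \ga = \pm 1$, the map $v \mapsto t^{3/2} v\ga^{-1}$ on $\R^2$ has Jacobian determinant $\pm t^{3}$, so the modulus is $t^3$. Hence the correction factor $1/\delta = t^{-3}$, and combined with the $dt/t$ from $A$ we get $t^{-3} \cdot t^{-1}\,dt = t^{-4}\,dt$, matching the claimed density $dv\, \tfrac{1}{t^4}\,dt\,d\ga$.

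**Assembling and verifying.**

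Fourth, I would assemble: writing a general element as $x = u_v a_t \ga$ (using that $a_t$ and $\ga$ commute, so $A \times \SL_{2,1}(\Z)$ really is a direct product sitting inside $H$), the formula $\int_H f\, d\mu = \sum_{\ga \in \SL_{2,1}(\Z)} \int_0^\infty \int_{\R^2} f(u_v a_t \ga)\, dv\, \tfrac{dt}{t^4}$ follows. Then I would directly check left-invariance against the three types of generators — left translation by some $u_{v_0}$, by some $a_{t_0}$, and by some $\ga_0$ — substituting into the integral and performing the change of variables in $v$ and $t$; each case reduces to the Jacobian computation above plus the invariance of $dv$, $dt/t$, and counting measure. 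Establishing invariance under all three suffices since they generate $H$.

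**Main obstacle.** The genuinely delicate point is bookkeeping the ordering conventions and the placement of $\ga$ versus $\ga^{-1}$ in the conjugation formula, together with matching the statement's convention $x = u_v \ga a_t$ (as in the displayed parametrization) against $x = u_v a_t \ga$ (as in the integral) — these agree because $a_t$ and $\ga$ commute, but one must say so. The conceptual content (modulus $= t^3$, coming from the weight-$3/2$ dilation on the $2$-dimensional unipotent together with $\det\ga = \pm 1$) is short; the risk is purely in sign/inverse/transpose errors, so I would carry out the left-invariance check explicitly for at least the $a_{t_0}$-translation to be safe.
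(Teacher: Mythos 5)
Your proposal is correct and follows essentially the same route as the paper: the paper verifies left-invariance directly by left-translating by a general element $u_{v_0}a_{t_0}\ga_0$, using the conjugation identity $a_{t_0}\ga_0 u_v(a_{t_0}\ga_0)^{-1}=u_{t_0^{3/2}v\ga_0^{-1}}$ and the resulting Jacobian $t_0^{3}$ in $v$ together with the invariance of $dt/t$ and of counting measure, which is exactly your modulus computation $\delta(a_t\ga)=t^3$ packaged as a single change of variables. (One trivial point: $\det\ga=1$, not $\pm 1$, for $\ga\in\SL(2,\Z)$, though only $|\det\ga|$ matters.)
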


\begin{proof}
 To check the left invariance, fix $\ga_0 \in \SL_{2,1}(\Z)$, $a_{t_0}:=\begin{bmatrix}
t_0^{-\frac{1}{2}}I_2 & 0 \\
0 & t_0 
\end{bmatrix}, t_0\ne 0$ and $u_{v_0}:=\begin{bmatrix}
I_2 & 0 \\
v_0 & 1 
\end{bmatrix}$, $v_0:=(x_0,y_0)\in \R^2$, then 
\begin{align*}
      u_{v_0} a_{t_0} \ga_0 \cdot u_v a_t \ga
      =& u_{v_0} [a_{t_0} \ga_0 u_v (a_{t_0} \ga_0)^{-1}] a_{t_0} \ga_0 a_t \ga \\
      =& u_{v_0} u_{t_0^{\frac{3}{2}} v\ga_0^{-1}}a_{t_0} a_t \ga_0 \ga \\
      =& u_{v_0+t_0^{\frac{3}{2}} v\ga_0^{-1}}a_{t_0 t} \ga_0 \ga
\end{align*}
It follows that
\begin{align*}
 & \int_{\SL_{2,1}(\Z)}\int_0^{\infty}\int_{\R^2} L_{u_{v_0} a_{t_0} \ga_0}[f(ua\ga) ] dx dy \frac{1}{t^4}dt d\ga \\
 =&  \int_{\SL_{2,1}(\Z)}\int_0^{\infty}\int_{\R^2} f(u_{v_0} a_{t_0} \ga_0ua\ga)  dx dy \frac{1}{t^4}dt d\ga \\
 =&  \int_{\SL_{2,1}(\Z)}\int_0^{\infty}\int_{\R^2} f(u_{v_0+t_0^{\frac{3}{2}} v\ga_0^{-1}}a_{t_0 t} \ga_0 \ga) dx dy \frac{1}{t^4}dt  d\ga
\end{align*}
By the change of variable
\begin{equation*}
    v:=(x,y)\mapsto v':=(x',y')=v_0+t_0^{\frac{3}{2}}v\ga_0^{-1}, t\mapsto t':=t_0t, \ga \mapsto \ga':=\ga_0 \ga,
\end{equation*}
and noticing that $\ga_0$ (with determinant $1$) does not contribute to the Jacobian, the last line of the equation above becomes
\begin{equation*}
    \int_{\SL_{2,1}(\Z)}\int_0^{\infty}\int_{\R^2} f(u_{v'}a_{t'}\ga') (t_0^{-\frac{3}{2}})^2 dx' dy' \frac{t_0^4}{t'^4}d(\frac{t'}{t_0})  d\ga'=    \int_{\SL_{2,1}(\Z)}\int_0^{\infty}\int_{\R^2} f(u_{v'}a_{t'}\ga') dx' dy' t'^4 dt'  d\ga'
\end{equation*}
This verifies the invariance of measure. Finally, by the uniqueness of Haar measure, the measure $d\ga$ on $\SL_{2,1}(\Z)$ is the counting measure and therefore the out-most integral reduces to the sum. 
\end{proof}

Let $\|\cdot\|$ denote the Hilbert-Schmidt norm on matrices. Namely $\|A\|:=\sqrt{\text{Trace}(A^TA)}$, or the square root of the sum of squares of all entries of the matrix $A$.

For any subgroup $L$ of $G=\SL(3,\R)$, let
\begin{equation}\label{meaning of the subscript T}
    L_T:=\{g\in L: \|g\|<T\}.
\end{equation}

Following \cite{Gorodnik2004DistributionOL}, for $g_1,g_2,g \in G$ and $T>0$, we define the so-called ``skewed balls" as follows:
\begin{equation}
    H_T[g_1,g_2]:=\{h\in H: \|g_1hg_2\|<T \}.
\end{equation}
Let 
\begin{equation}
    V_{\ga,T}[g_1,g_2]:=\{h\in U A \ga: \|g_1hg_2\|<T \},
\end{equation}
then it follows that 
\begin{equation}\label{decomposition of H_T into V_T}
    H_T[g_1,g_2]=\bigsqcup_{\ga \in \SL_{2,1}(\Z)}V_{\ga,T}[g_1,g_2].
\end{equation}

\vspace{5mm}

For $F\in C_c(G/\Ga)$ and $g_1,g_2\in G$, we consider the measure defined by the integral 
\begin{equation}\label{definition of measure funtional}
    \phi_{\mu}(F,T)=\phi_{\mu}(F,T,g_1,g_2):=\frac{1}{\mu \left(H_T[g_1,g_2] \right)}\int_{H_T[g_1,g_2]}F(h^{-1}g_1\Gamma)d\mu(h).
\end{equation}

In view of Theorem 2.3 in \cite{Gorodnik2004DistributionOL}, the following equidistribution theorem plays the central role in the proof of Theorem \ref{equidistribution result on  G mod H}:

\begin{theorem}\label{The $G$-invariance of limiting measure}
Let $H$ and $H_T$ be defined as above, and let $\mu=\mu_H$ denote the left Haar measure on $H$, and $\mu_X$ denote the normalized $G$-invariant probability measure on $X=G/\Ga$. For any $f\in C_c(X)$, let $\phi_{\mu}(F,T)$ be defined as in \ref{definition of measure funtional} with $g_1, g_2\in G$ arbitrary. Then
\begin{equation}
    \lim_{T\to \infty} \phi_{\mu}(F,T)= \int_{X}F(x)d\mu_X(x).
\end{equation}
In other words, the limiting measure is the $G$-invariant probably measure on $X$. 
\end{theorem}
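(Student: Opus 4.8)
The plan is to prove the theorem by first establishing equidistribution along the "building blocks" $V_{\gamma,T}[g_1,g_2]$ (whose defining condition $\|g_1 h g_2\| < T$ involves a single coset $UA\gamma$, so essentially an expanding translate of a piece of $UA$), then assembling the pieces over $\gamma \in \SL_{2,1}(\Z)$ via the decomposition \eqref{decomposition of H_T into V_T}, and finally using $G$-invariance arguments to pin down the limit. I would organize the argument around Shah's theorems on equidistribution of expanding translates of unipotent (or more generally $A U$-type) orbits, combined with the linearization technique, exactly as advertised in the abstract.

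First I would fix $g_1, g_2$ and analyze a single $V_{\gamma,T}[g_1,g_2]$. Writing $h = u_v a_t \gamma$, the norm constraint $\|g_1 u_v a_t \gamma g_2\| < T$ becomes, for each fixed $t$ large, an affine constraint on $v \in \R^2$; as $t \to \infty$ the $a_t$-factor expands the $U$-directions while contracting others, so $F(h^{-1} g_1 \Gamma) = F(\gamma^{-1} a_t^{-1} u_v^{-1} g_1 \Gamma)$ is being integrated over an expanding family of translated pieces of a $U$-orbit pushed by the diagonal flow $a_t^{-1}$. The key input is that $\{a_t^{-1}\}_{t>0}$ together with $U$ generates enough of the dynamics that Shah's results (on limits of $(g_n)$-translates of orbits of the subgroup generated by unipotents, after checking no intermediate-subgroup obstruction) give: for any $x \in G/\Gamma$ and any nice averaging family, $\frac{1}{\mu(V_{\gamma,T})}\int_{V_{\gamma,T}} F(h^{-1}g_1\Gamma)\,d\mu(h) \to \mu_X(F)$. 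Here I would need to verify the non-divergence / non-escape-of-mass hypotheses (Dani–Margulis) and rule out concentration on orbits of proper closed subgroups containing the relevant unipotents — the candidate intermediate subgroups are few because $U$ together with the $a_t$-flow and the $\SL(2)$-part already generates $\SL(3,\R)$-sized behavior.

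Next I would handle the sum over $\gamma$. By \eqref{decomposition of H_T into V_T}, $\phi_\mu(F,T)$ is a convex combination $\sum_\gamma \frac{\mu(V_{\gamma,T})}{\mu(H_T)}\,\phi_\mu^{(\gamma)}(F,T)$ of the per-$\gamma$ averages. The subtlety — and this is where the "infinitely many non-compact connected components" novelty bites — is that infinitely many $\gamma$ contribute, with weights $\mu(V_{\gamma,T})/\mu(H_T)$ that must be controlled uniformly, and for the $\gamma$ with very large norm the set $V_{\gamma,T}$ may be small or "thin" so that the single-piece equidistribution is not uniform. I would therefore split $\SL_{2,1}(\Z)$ into a finite part $\{\|\gamma\| \le R\}$ and a tail; on the finite part apply the equidistribution above (finitely many limits, each $\mu_X(F)$); for the tail, use a volume estimate showing $\sum_{\|\gamma\|>R} \mu(V_{\gamma,T}) / \mu(H_T) \to$ (something small as $R\to\infty$, uniformly in $T$), so the tail contributes negligibly to the convex combination. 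Establishing this uniform tail bound on the skewed-ball volumes is, I expect, the main obstacle: it requires precise asymptotics for $\mu(H_T[g_1,g_2])$ and for $\mu(V_{\gamma,T}[g_1,g_2])$ as functions of $T$ and $\gamma$, of the kind one gets by carefully integrating out the $U$ and $A$ variables against the explicit Haar density $dv\, t^{-4} dt$ from Proposition 1.1, and tracking how the Hilbert–Schmidt norm of $g_1 u_v a_t \gamma g_2$ grows.

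Finally, once $\lim_T \phi_\mu(F,T)$ is shown to exist and to equal a fixed probability measure $\nu$ on $X$ independent of the minor data, I would identify $\nu = \mu_X$ by a $G$-invariance argument: perturbing $g_1 \mapsto g g_1$ changes $H_T[g_1,g_2]$ in a way that, after the $T\to\infty$ limit, translates $\nu$ by $g$ while leaving the limit unchanged (the norm ball is "asymptotically invariant" under the relevant translations), so $\nu$ is $G$-invariant, hence equals the unique $G$-invariant probability measure $\mu_X$ by uniqueness of Haar measure on $G/\Gamma$. I would assemble these: single-piece equidistribution (Shah + linearization + Dani–Margulis non-divergence), uniform tail control of skewed-ball volumes, convex-combination passage to the limit, and the $G$-invariance rigidity step to name the limit.
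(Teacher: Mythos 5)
Your overall architecture --- decompose $\phi_\mu(F,T)$ into the convex combination over the pieces $V_{\ga,T}[g_1,g_2]$, control the tail over $\|\ga\|>R$ uniformly in $T$ via volume asymptotics, and handle each fixed $\ga$ by Shah-type theorems plus linearization --- matches the paper's. But two of your steps have genuine gaps. First, you assert that the per-piece averages converge to $\mu_X(F)$ as a direct application of Shah's equidistribution results for expanding translates of unipotent orbits. The sets $V_{\ga,T}$ are not of that form: they are norm-defined regions in $UA$ integrated against the singular density $dv\,t^{-4}dt$, and the mass concentrates near the lower root $t=a_{\ga,T}\asymp\|G_1\ga H_1\|^2/T^2\to 0$, where for each $t$ the $v$-domain is an ellipse whose center drifts like $t^{3/2}$. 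Making Shah's dichotomy applicable requires (i) a truncation of the $t$-range showing the contributions near $a$ and beyond $c=\|G_1\ga H_1\|^{1.5}/T^{1.5}$ are negligible, (ii) verifying that in the middle range the ellipse contains a disk of radius $\gg t^{0.4}$, and (iii) an expansion lemma ruling out the first alternative of the dichotomy, which in the paper is proved by decomposing $a_t^{-1}u_v^{-1}$ into elements of two embedded copies of $\SL(2,\R)$ and invoking a representation-theoretic lower bound on projections to the fixed subspace (Lemma 4.4 and Lemma 4.9). This is the technical heart of the proof and is entirely absent from your sketch; without it, "apply Shah" is not an argument. Note also that the paper does not prove per-piece equidistribution directly: it extracts a weak-* subsequential limit $\eta$ on the one-point compactification, proves $\eta$ is $U$-invariant by an elementary symmetric-difference estimate on the ellipses, shows $\eta(\infty)=0$ and $\eta(Y)=0$ (where $Y$ is the union of the linearization sets $N(H',U)\Ga$), and only then identifies $\eta=\mu_X$ via the Mozes--Shah/Ratner ergodic decomposition. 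This is a strictly weaker input than full equidistribution of each piece.

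Second, your closing step --- identifying the limit by arguing that perturbing $g_1\mapsto gg_1$ translates the limit measure by $g$ while the norm balls are "asymptotically invariant" --- does not work. Replacing $g_1$ by $gg_1$ changes the integrand to $F(h^{-1}gg_1\Ga)$, which is not a left translate of $F(h^{-1}g_1\Ga)$ by $g$, and it changes the skewed ball $H_T[g_1,g_2]$ itself; the volume asymptotics of Proposition 2.1 show the normalization changes by the nontrivial factor $\alpha(g_1,g_2)$, so there is no asymptotic invariance to exploit. (This step is also logically redundant given your first step, which already claims the limit is $\mu_X$.) In the paper the $G$-invariance of the limit is not obtained by varying $g_1$ at all; it is a consequence of Ratner's measure classification applied to the $U$-invariant limit measure after excluding the singular set $Y\cup\{\infty\}$. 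You should replace your final step by that mechanism, and supply the $U$-invariance and non-escape-of-mass arguments it requires.
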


\subsection{Structure of the paper and further work}

\begin{itemize}
    \item The outline of our proof of Theorem \ref{The $G$-invariance of limiting measure} is as follows. We will first give an estimate for the volume of $H_T[g_1,g_2]$ (Section 2). This estimate will help us to show the unipotent invariance of limiting measure (Section 3), which in turn opens the gate to the application of Ratner theory. In particular, a dichotomy theorem (Theorem \ref{Shah dichotomy theorem}) due to Shah plays the central role here. To exclude the first outcome of the dichotomy theorem, we use a lemma (Lemma \ref{very important inequaity lemma of nimish-gorodnik}) on the expansion of the norms of vectors under the action of unipotent subgroups. This allows us to prove the non-escape of mass on the space of one-point compactification of $G/\Ga$ and the $G$-invariance will follow from Ratner's measure classification theorem (\cite{Ratner91a}, \cite{Mozes1995OnTS}).
    \item In Section 5 we prove Theorem \ref{equidistribution result on  G mod H} as a consequence of the duality principle stated in Theorem 2.2 to Corollary 2.4 in \cite{Gorodnik2004DistributionOL}. 
\end{itemize}

\vspace{5mm}
We remark that despite that the theorem is stated from the case of rank-two lattices in $\R^3$. Our work in progress is to generalize the results to higher ranks and dimensions. The main difficulty lies in the estimtate of the volumes of balls as well as the expansion of norms under unipotent actions. 

\section*{Acknowledgements}
The authors would like to thank Nimish Shah for helpful discussions and generous sharing of his ideas. We would also like to thank Uri Shapira and Barak Weiss for their comments on the manuscript.

\section{Estimate of the Haar measure growth of skewed balls in $H$}

For $g_1, g_2 \in G$, by using Iwasawa decomposition (for block-lower-triangular matrix), we may assume 
\begin{equation}\label{matrix representation of g1 and g2}
    g_1:=k_1\begin{bmatrix} G_1 & 0 \\G_3 & G_4 \end{bmatrix}, g_2:=\begin{bmatrix} H_1 & 0 \\H_3 & H_4 \end{bmatrix} k_2
\end{equation}
where $k_1,k_2 \in \SO(3,\R)$, $G_1,H_1\in \R^{2\times 2}$, $G_3, H_3\in \R^{2\times 1}$ and $G_4, H_4\in \R$.

The main goal of this section is to prove the following:
\begin{proposition}\label{computation for H and V} Recall 
    $H_T[g_1,g_2]:=\{h\in H: \|g_1hg_2\|<T \}$ and $V_{\ga,T}[g_1,g_2]:=\{h\in U A \ga: \|g_1hg_2\|<T \}$. Then
    \begin{equation}
    V_{\ga,T}[g_1,g_2]=\frac{\pi}{G_4^2 |\det(H_1)|}\frac{T^6}{6\|G_1\ga H_1\|^4}+O_{g_1,g_2}(T^4),
and
   \end{equation}
   \begin{equation}
    H_T[g_1,g_2]=\frac{\pi}{G_4^2 |\det(H_1)|}\frac{T^6}{6}\sum_{\ga\in \SL(2,\Z)}\frac{1}{\|G_1\ga H_1\|^4}+O_{g_1,g_2}(T^4).  
   \end{equation}
Moreover, if $g_1$ and $g_2$ lie in a bounded subset of $G$, then the implied constant in $O_{g_1,g_2}$ is bounded.

In particular, for $g_1=g_2=e$:

\begin{equation}
    \mu(H_T)=\frac{\pi}{6}\sum_{\ga\in \SL(2,\Z)}\frac{1}{\|\ga\|^4} T^6+O(T^4).
\end{equation}
\end{proposition}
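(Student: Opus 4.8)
The plan is to compute $\mu(V_{\ga,T}[g_1,g_2])$ directly from the Haar measure formula in Proposition \ref{Haar measure on $H$}, extract the leading term, and then sum over $\ga\in\SL(2,\Z)$ using the decomposition \eqref{decomposition of H_T into V_T}. Writing $h=u_v a_t\ga$ with $v\in\R^2$, $t>0$, the measure is $dv\,t^{-4}dt$, so I must understand the region $\{(v,t): \|g_1 u_v a_t \ga g_2\|<T\}$. Using the Iwasawa forms \eqref{matrix representation of g1 and g2} and bi-$\SO(3,\R)$-invariance of the Hilbert–Schmidt norm, I can drop $k_1,k_2$ and reduce to $\|L_1 u_v a_t \ga L_2\|$ with $L_1=\begin{bmatrix} G_1 & 0\\ G_3 & G_4\end{bmatrix}$, $L_2=\begin{bmatrix} H_1 & 0\\ H_3 & H_4\end{bmatrix}$. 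Multiplying out the block matrices, the product $L_1 u_v a_t \ga L_2$ has block entries that are explicit in $v$ and $t$: the dominant contributions as $T\to\infty$ come from the block $t^{-1/2}G_1\ga H_1$ (growing like $t^{-1/2}$, i.e.\ small) versus terms involving $t$ and $tv$. The key observation is that the norm-squared is a quadratic form in the variables $(v,t^{1/2},\dots)$ whose leading behavior, for large $t$, is governed by a term of the shape $t\|\cdot\|^2$ plus $t\|v\cdot(\text{something})\|^2$; pinning down exactly which entries dominate is the crux.

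Concretely, I expect that after the dust settles, $\|g_1 u_v a_t\ga g_2\|^2 = t\,G_4^2\|$ (row from $G_3,G_4$ meeting $H$-blocks)$\|^2 + (\text{cross terms linear in } v) + (\text{the } t^{-1}\|G_1\ga H_1\|^2 \text{ term})$, and the constraint $\|\cdot\|^2<T^2$ becomes, to leading order, $t < c\,T^2$ together with $v$ ranging over an ellipse of area proportional to $t^{-1}$ (because the $v$-dependence enters multiplied by a factor that scales like $t$, from $a_t$ acting on the bottom row). Then
\[
\mu(V_{\ga,T}) = \int_0^{c T^2} \big(\text{area of }v\text{-ellipse}\big)\,\frac{dt}{t^4} \approx \int_0^{cT^2} \frac{\pi\,\kappa}{t}\cdot\frac{dt}{t^4}\cdot(\text{wrong — recheck powers}),
\]
so the real work is bookkeeping the powers of $t$ so that the $t$-integral produces $\int^{cT^2} t^{?}\,dt \sim T^6$. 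Given the target $T^6/6$, the integrand in $t$ must be $\sim t^5\,dt$ after accounting for the $t^{-4}$ from Haar and the $t^{-1}$-area of the $v$-ellipse, which forces $cT^2$-type cutoffs with $c$ built from $G_4^{-2}|\det H_1|^{-1}\|G_1\ga H_1\|^{-?}$; I will track constants carefully to land on $\frac{\pi}{G_4^2|\det(H_1)|}\frac{T^6}{6\|G_1\ga H_1\|^4}$. The error term $O(T^4)$ comes from: (i) the difference between the true region and its leading-order ellipsoidal approximation, a boundary-layer estimate of codimension-one size $T^{-2}$ relative to the main term, and (ii) the contribution of small $t$, where the Haar density $t^{-4}$ is large but the region is correspondingly constrained — I will split at $t=1$ and bound the small-$t$ part crudely.

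For the summation step, once $\mu(V_{\ga,T}[g_1,g_2]) = \frac{\pi}{G_4^2|\det(H_1)|}\frac{T^6}{6\|G_1\ga H_1\|^4} + O_{g_1,g_2}(T^4)$ holds with the error uniform over $\ga$ in the relevant range, I sum over $\ga\in\SL(2,\Z)$. Convergence of $\sum_\ga \|G_1\ga H_1\|^{-4}$ is the standard Eisenstein-type fact that $\sum_{\ga\in\SL(2,\Z)}\|\ga\|^{-s}<\infty$ for $s>2$ (here $s=4$), and $\|G_1\ga H_1\|$ is comparable to $\|\ga\|$ up to constants depending on $g_1,g_2$, so the series converges and the tail $\sum_{\|\ga\|>R}$ is negligible. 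The only subtlety is that the $O(T^4)$ error, summed over infinitely many $\ga$, must still be $O(T^4)$: I handle this by noting that $V_{\ga,T}[g_1,g_2]$ is empty once $\|\ga\|$ exceeds a constant multiple of $T$ (since $\|g_1 h g_2\|\gtrsim \|$relevant block$\|\gtrsim t^{-1/2}\|G_1\ga H_1\|$ and also $\gtrsim$ a $t$-growing term, balancing gives a bound forcing $\|\ga\|\lesssim T^{\text{something}}$), so only $O(T^{?})$ values of $\ga$ contribute; choosing the split point so that $(\#\{\ga\})\cdot O(T^4)$ and the tail of the main series both stay within $O(T^4)$ closes the argument. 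The case $g_1=g_2=e$ is immediate with $G_1=H_1=I_2$, $G_4=1$, $\det H_1=1$. The main obstacle is the first step: correctly identifying the leading-order shape of the region $H_T[g_1,g_2]$ in the $(v,t)$ coordinates and matching constants — the multivariable norm expansion is where any error would hide.
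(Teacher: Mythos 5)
Your overall architecture (reduce to a $(v,t)$-integral against $dv\,t^{-4}dt$, show the $v$-slice is an ellipse, integrate in $t$, then sum over $\ga$ using convergence of $\sum\|G_1\ga H_1\|^{-4}$ and a cutoff on which $\ga$ contribute) matches the paper's. But the core step — the one you yourself flag as "the crux" and leave with "wrong — recheck powers" — is a genuine gap, and the heuristic picture you sketch for it is incorrect. The exact condition $\|g_1hg_2\|\le T$ is (after multiplying through by $t$) $\|G_3\ga H_1+G_4v\ga H_1+G_4H_3t^{3/2}\|^2\le -G_4^2H_4^2t^3+tT^2-\|G_1\ga H_1\|^2$, so the $v$-ellipse has area $\pi\bigl(-G_4^2H_4^2t^3+tT^2-\|G_1\ga H_1\|^2\bigr)/(G_4^2|\det H_1|)$ — growing like $tT^2$ in the bulk, not like $t^{-1}$ — and $t$ ranges between the two positive roots $a<b$ of the cubic, with $a\asymp\|G_1\ga H_1\|^2/T^2$ and $b\asymp T$ (not $cT^2$). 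Consequently the volume does \emph{not} come from an integrand $\sim t^5\,dt$ up to $t\asymp T^2$: the mass concentrates at the \emph{lower} endpoint $t\approx a\asymp T^{-2}$, where the antiderivative contributes $\frac{T^2}{2a^2}-\frac{\|G_1\ga H_1\|^2}{3a^3}$, and the constant $\frac16=\frac12-\frac13$ emerges only after the refined expansion $a=\frac{\|G_1\ga H_1\|^2}{T^2}\bigl(1+O(T^{-2})\bigr)$ of the small root. Without locating where the volume concentrates and without this second-order root expansion you cannot get the stated leading coefficient, only (at best) the order of magnitude $\Theta(T^6/\|G_1\ga H_1\|^4)$. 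Also, there is no "boundary-layer" error to estimate: for each fixed $t$ the slice is exactly an ellipse, so the errors instead come from the $O(\log T)$ evaluation at $t=b$ and from the correction to $a$.

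Two smaller points on the summation step. The correct cutoff is $\|G_1\ga H_1\|^2<M_T\asymp T^3$, i.e.\ $\|\ga\|\lesssim T^{3/2}$, not $\lesssim T$; the paper splits the sum at $\|G_1\ga H_1\|=T$, uses the refined asymptotic below that threshold (where the per-$\ga$ error is $\|G_1\ga H_1\|^{-4}O(T^4)$, hence summable), and controls the range $T<\|G_1\ga H_1\|<\sqrt{M_T}$ by the tail bound $\sum_{\|A\ga B\|>K}\|A\ga B\|^{-4}=O(K^{-2})$, which in turn rests on the lattice-point count $N(\tau)=c\tau^2+O_\eta(\tau^{5/3+\eta})$ from Gorodnik--Nevo. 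Your proposal gestures at "choosing the split point" but does not supply this counting input, which is what actually makes $(\text{number of contributing }\ga)\times(\text{error})$ land inside $O(T^4)$.
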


\begin{proof}
Recall that our $H_T[g_1,g_2]$ is the disjoint union of 
    $V_{\ga,T}[g_1,g_2]:=\{h\in U A \ga: \|g_1hg_2\|<T \}$. We will first evaluate $V_{\ga,T}[g_1,g_2]$ and then take the sum over $\SL(2,\Z)$. Since $k_1,k_2$  preserve the Hilbert-Schmidt norm, we may assume $k_1$ and $k_2$ are identity matrices.

Every matrix in $H$ can be written as
\begin{equation*}
\begin{bmatrix}
I_2 & 0 \\
v & 1 
\end{bmatrix}
\begin{bmatrix}
t^{-\frac{1}{2}}I_2 & 0 \\
0 & t 
\end{bmatrix}
\begin{bmatrix}
\ga & 0 \\
0 &  1 
\end{bmatrix}    
=\begin{bmatrix}
t^{-\frac{1}{2}}\ga & 0 \\
t^{-\frac{1}{2}}v\ga &  t 
\end{bmatrix}, 
\end{equation*}
where $t\ne 0, \ga \in \SL(2,\Z), v=(x,y)\in \R^2$.
Note
\begin{align*}
    g_1hg_2=&\begin{bmatrix} G_1 & 0 \\G_3 & G_4 \end{bmatrix}
    \begin{bmatrix}
t^{-\frac{1}{2}}\ga & 0 \\
t^{-\frac{1}{2}}v\ga &  t 
\end{bmatrix}
    \begin{bmatrix} H_1 & 0 \\H_3 & H_4 \end{bmatrix}\\
    =&\begin{bmatrix}
t^{-\frac{1}{2}}G_1\ga & 0 \\
t^{-\frac{1}{2}}G_3\ga+t^{-\frac{1}{2}}G_4v\ga &  G_4t 
\end{bmatrix}\begin{bmatrix} H_1 & 0 \\H_3 & H_4 \end{bmatrix}\\
=&\begin{bmatrix}
t^{-\frac{1}{2}}G_1\ga H_1 & 0 \\
t^{-\frac{1}{2}}G_3\ga H_1+t^{-\frac{1}{2}}G_4v\ga H_1 + G_4 H_3t &  G_4H_4t 
\end{bmatrix}
\end{align*}
Upon taking the sum of squares and rearranging terms, we conclude that $\|g_1hg_2\| \le T$ is equivalent to
\begin{equation}\label{equation defining the disk of integration}
    \|G_3\ga H_1+G_4 v\ga H_1+ G_4 H_3 t^{\frac{3}{2}}\|^2 \le -G_4^2 H_4^2 t^3+tT^2-\|G_1\ga H_1\|^2
\end{equation}

In view of \eqref{equation defining the disk of integration}, let 
\begin{equation}\label{the notation D for the disk}
    D=D_{\ga, t}:=\{v\in \R^2: \|G_3\ga H_1+G_4 v\ga H_1 + G_4 H_3 t^{\frac{3}{2}}\|^2 \le -G_4^2 H_4^2 t^3+tT^2-\|G_1\ga H_1\|^2\}.
\end{equation}

Note
\begin{equation}
    \mu \left(V_{\ga,T}[g_1,g_2]\right):=\int_0^{\infty}\int_{\R^2} \mathbf{1}_{V_{\ga,T}[g_1,g_2]}(u_va_t\ga) dv \frac{1}{t^4}dt=\int_0^{\infty}m(D_{\ga,t})\frac{1}{t^4}dt,
\end{equation}
where $m(D_{\ga,t})$ is the Lebesgue measure of $D_{\ga,t}$ with the understanding that it is zero for $t$ with $-G_4^2 H_4^2 t^3+tT^2-\|G_1\ga H_1\|^2<0$.

We  observe that $D_{\ga,t}$ is the interior of an ellipse on $\R^2$, whose area is 
\begin{equation}
    \pi \frac{-G_4^2 H_4^2 t^3+tT^2-\|G_1\ga H_1\|^2}{G_4^2 |\det(H_1)|}.
\end{equation}


The maximal value of $-G_4^2 H_4^2 t^3+tT^2$ for $t>0$ is $M_{T}:=\frac{2T^3}{3\sqrt{3}G_4 H_4}$, attained at $t_0:=\frac{T}{\sqrt 3 G_4H_4}$. It is easy to see that $-G_4^2 H_4^2 t^3+tT^2-\|G_1\ga H_1\|^2$ has two positive roots which we denote by $a=a_{\ga,T}$ and $b=b_{\ga,T}$ with $0<a<b<\frac{T}{G_4H_4}$, whenever $\|G_1\ga H_1\|^2<M_T$. Moreover, $-G_4^2 H_4^2 t^3+tT^2-\|G_1\ga H_1\|^2$ is positive on $(a,b)$. Observe that we only have to consider those $\ga$ with $\|G_1\ga H_1\|^2<M_T$ and $0<a<t_0<b<\sqrt{3}t_0$.

Since $-G_4^2 H_4^2 a^3+aT^2-\|G_1\ga H_1\|^2=0$,
\begin{equation}\label{range for a}
    a=\frac{\|G_1\ga H_1\|^2}{T^2-G_4^2H_4^2 a^2}\in
    \left(\frac{\|G_1\ga H_1\|^2}{T^2}, \frac{3\|G_1\ga H_1\|^2}{2T^2}\right).
\end{equation}

In particular,
\begin{equation}
        a=\Theta \left( \frac{\|G_1\ga H_1\|^2}{T^2} \right) \label{estimate for a in all range}
\end{equation}

Meanwhile by Taylor expansion,
\begin{align}
    a=&\frac{\|G_1\ga H_1\|^2}{T^2}\frac{1}{1-\frac{G_4^2H_4^2 a^2}{T^2}} \nonumber\\ 
    =&\frac{\|G_1\ga H_1\|^2}{T^2}\left(1+\frac{G_4^2H_4^2 a^2}{T^2}+\cdots \right) \label{taylor expansion for a}
\end{align}

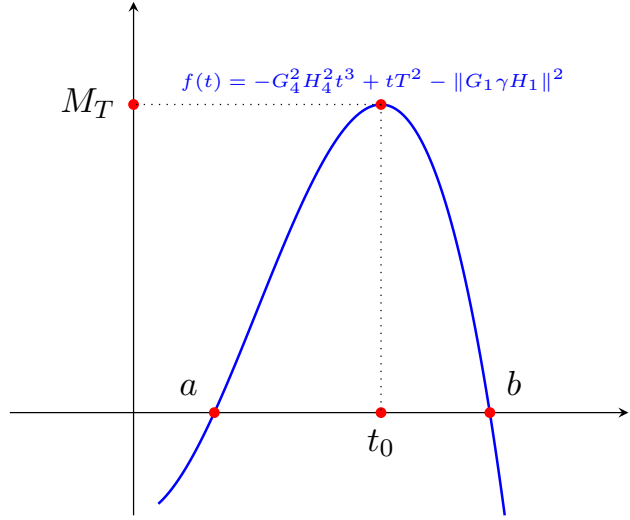
\begin{figure}[hbt!] 
    \centering
    \pgfplotsset{ticks=none}
\begin{tikzpicture}[scale=1.2]
    \begin{axis}[
        axis lines=center,
        xmax = 4,
        xmin = -1,
        ymax = 4,
        ]
        \addplot [domain=0.2:3,samples=250, thick, blue] {-x^3+3*x^2-1}
        node [font=\tiny, pos=0.5, above]{$f(t)=-G_4^2 H_4^2 t^3+tT^2-\|G_1\ga H_1\|^2$};
        \node[red,label={[yshift=0.1ex]135:{$a$}},circle,fill,inner sep=1.2pt] at (axis cs:0.6527,0) {};
        \node[red,label={[yshift=0.1ex]45:{$b$}},circle,fill,inner sep=1.2pt] at (axis cs:2.88,0) {};
        \node[red,label={[yshift=0.1ex]45:{}},circle,fill,inner sep=1.2pt] at (axis cs:2,3) {};
        \node[label={[yshift=0.1ex]180:{$M_T$}},circle,fill=red,inner sep=1.2pt] at (axis cs:0,3) {};
        \node[label={[yshift=0.1ex]270:{$t_0$}},circle,fill=red,inner sep=1.2pt] at (axis cs:2,0) {};
        \addplot+[
      black,thin,dotted,
      mark=none,
      const plot,
      empty line=jump,
    ]
    coordinates {
        (2,3)
        (2,0)
    };
      \addplot+[
      black,thin,dotted,
      mark=none,
      const plot,
      empty line=jump,
    ]
    coordinates {
        (2,3)
        (0,3)
    };
    \end{axis}
    
    \end{tikzpicture}
    \label{fig:finding the roots of cubic equation}
    \caption{Estimating the roots of the polynomial.}
\end{figure}

Moreover, for the $\ga$ with smaller norms, it follows from \eqref{taylor expansion for a} that
\begin{equation}\label{estimate for a in smaller range}
    a=\frac{\|G_1\ga H_1\|^2}{T^2}\left(1+O_{g_1,g_2}\left(\frac{1}{T^2} \right)\right), \text{ whenever }\|G_1\ga H_1\|\le T.
\end{equation}

\vspace{5mm}
Now we compute the measure of $H_T[g_1,g_2]$. By Proposition \ref{Haar measure on $H$} with $f=\mathbf{1}_{H_T[g_1,g_2]}$,
\begin{align}
    \mu \left(H_T[g_1,g_2] \right)
    =& \sum_{\substack{\ga\in \SL(2,\Z),\\ \|G_1\ga H_1\|^2<M_T}} \mu \left(V_{\ga,T}[g_1,g_2]\right)\\
    = &\sum_{\substack{\ga\in \SL(2,\Z),\\ \|G_1\ga H_1\|^2<M_T}} \frac{\pi }{G_4^2 |\det(H_1)|}\int_a^b \frac{-G_4^2 H_4^2 t^3+tT^2-\|G_1\ga H_1\|^2}{t^4}dt \nonumber\\
    =&\sum_{\substack{\ga\in \SL(2,\Z),\\ \|G_1\ga H_1\|^2<M_T}} \frac{\pi }{G_4^2 |\det(H_1)|}\int_a^b -\frac{G_4^2 H_4^2}{t}+\frac{T^2}{t^3}-\frac{\|G_1\ga H_1\|^2}{t^4}dt 
\end{align}
But
\begin{align*}
    &\int_a^b -\frac{G_4^2 H_4^2}{t}+\frac{T^2}{t^3}-\frac{\|G_1\ga H_1\|^2}{t^4}dt\\
    =& \left(-G_4^2 H_4^2 \log(t)-\frac{T^2}{2t^2}+\frac{\|G_1\ga H_1\|^2}{3t^3}\right)\Bigg|_{t=a}^{t=b}\\
    =& \left(-G_4^2 H_4^2 \log(b)-\frac{T^2}{2b^2}+\frac{\|G_1\ga H_1\|^2}{3b^3}\right)-\left(-G_4^2 H_4^2 \log(a)-\frac{T^2}{2a^2}+\frac{\|G_1\ga H_1\|^2}{3a^3}\right)
\end{align*}
Since $0<a<t_0<b<\sqrt{3}t_0:=\frac{T}{G_4H_4}$, $\|G_1\ga H_1\|^2<M_T=\frac{2T^3}{3\sqrt{3}G_4 H_4}$ and by \eqref{range for a}, the expression above is asymptotically 
\begin{equation}\label{before plugging in a}
    \frac{T^2}{2a^2}-\frac{\|G_1\ga H_1\|^2}{3a^3}+O_{g_1,g_2}(\log(T)).
\end{equation}

Now by \eqref{estimate for a in all range}, the equation \eqref{before plugging in a} has the estimate 
\begin{equation}\label{estimate for V ga T}
   \Theta \left(\frac{T^6}{\|G_1\ga H_1\|^4} \right), 
\end{equation}
for all $\ga$.

In view of \eqref{estimate for a in smaller range}, if furthermore $\|G_1\ga H_1\|\le T$, then \eqref{before plugging in a} becomes
\begin{align}
    =&\frac{T^6}{2\|G_1\ga H_1\|^4}\left(1+O_{g_1,g_2}\left(\frac{1}{T^2} \right)\right)^{-2}-\frac{T^6}{3\|G_1\ga H_1\|^4}\left(1+O_{g_1,g_2}\left(\frac{1}{T^2} \right)\right)^{-3}\nonumber\\
    =&\frac{T^6}{2\|G_1\ga H_1\|^4}(1-O_{g_1,g_2}(\frac{1}{T^2})+\cdots)^2-\frac{T^6}{3\|G_1\ga H_1\|^4}(1-O_{g_1,g_2}(\frac{1}{T^2})+\cdots)^3 \nonumber\\
    =&\frac{T^6}{6\|G_1\ga H_1\|^4}-\frac{1}{\|G_1\ga H_1\|^4}O_{g_1,g_2}\left(T^4\right)
\end{align}
It follows that 
\begin{align}
    \mu \left(H_T[g_1,g_2] \right)
    = & \frac{\pi}{G_4^2 |\det(H_1)|}\sum_{\substack{\ga\in \SL(2,\Z), \nonumber\\ \|G_1\ga H_1\|\le T}}\left(\frac{T^6}{6\|G_1\ga H_1\|^4}+\frac{1}{\|G_1\ga H_1\|^4}O\left(T^4\right)\right) \nonumber\\
    +&\frac{\pi }{G_4^2 |\det(H_1)|}\sum_{\substack{\ga\in \SL(2,\Z),\\ T<\|G_1\ga H_1\|<\sqrt{M_T}}}   O\left(\frac{T^6}{\|G_1\ga H_1\|^4} \right) \nonumber \\
    =&  \frac{\pi}{G_4^2 |\det(H_1)|}\frac{T^6}{6}\sum_{\ga\in \SL(2,\Z)}\frac{1}{\|G_1\ga H_1\|^4}+O_{g_1,g_2}(T^4), \label{measure of H_T}
\end{align}

where the last line of the equation is the consequence of the lemma below.
\end{proof}

\begin{lemma}
For any invertible $2\times 2$ matrices $A$ and $B$ and positive numbers $\alpha>2$ and $K$, we have
\begin{equation}\label{head estimate for the sum over SL2}
    \sum_{\substack{\ga\in \SL(2,\Z),\\ \|A\ga B\|\le K}}\frac{1}{\|A\ga B\|^{\alpha}}
\end{equation}
is convergent as $K\to \infty$ and we have the tail estimate:
\begin{equation}\label{tail estimate for the sum over SL2}
    \sum_{\substack{\ga\in \SL(2,\Z),\\ \|A\ga B\|> K}}\frac{1}{\|A\ga B\|^{\alpha}}=O_{A,B}(K^{2-\alpha})
\end{equation}
\end{lemma}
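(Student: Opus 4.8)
The plan is to prove the lemma by comparing the sum over $\SL(2,\Z)$ with an integral over $\SL(2,\R)$, exploiting that the function $\ga\mapsto 1/\|A\ga B\|^\alpha$ decays fast enough. First I would reduce to the case $A = B = I$: since $A$ and $B$ are invertible, there are constants $c_1, c_2 > 0$ (depending on $A,B$) with $c_1\|\ga\| \le \|A\ga B\| \le c_2\|\ga\|$ for all $\ga$ — this follows from submultiplicativity of the Hilbert–Schmidt norm applied to $A\ga B$ and to $A^{-1}(A\ga B)B^{-1}$. Hence $\|A\ga B\|\le K$ forces $\|\ga\|\le K/c_1$, and $1/\|A\ga B\|^\alpha \le 1/(c_1\|\ga\|)^\alpha$, so both the convergence and the tail bound $O_{A,B}(K^{2-\alpha})$ reduce to the corresponding statements for $\sum_{\ga\in\SL(2,\Z),\,\|\ga\|>K'}\|\ga\|^{-\alpha}$ with $K' = K/c_2$ (for the tail, one must be slightly careful and use $\|A\ga B\| > K \Rightarrow \|\ga\| > K/c_2$, giving the tail sum over $\|\ga\| > K/c_2$, which is what we want up to adjusting constants).

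Next I would count lattice points of $\SL(2,\Z)$ in norm balls. The standard fact (e.g. from the theory of lattice point counting in $\SL(2,\R)$, or a direct elementary argument) is that $\#\{\ga\in\SL(2,\Z): \|\ga\|\le R\} = \Theta(R^2)$ as $R\to\infty$. Granting this, I would estimate the tail sum by a dyadic decomposition: write $\sum_{\|\ga\|>K}\|\ga\|^{-\alpha} = \sum_{j\ge 0}\sum_{2^jK<\|\ga\|\le 2^{j+1}K}\|\ga\|^{-\alpha}$, bound each inner sum by $(2^jK)^{-\alpha}\cdot\#\{\|\ga\|\le 2^{j+1}K\} = O((2^jK)^{-\alpha}(2^{j+1}K)^2) = O(2^{(2-\alpha)j}K^{2-\alpha})$, and sum the geometric series in $j$ (which converges precisely because $\alpha > 2$) to get $O(K^{2-\alpha})$. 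The same dyadic estimate starting from $\|\ga\|\le K$ and decomposing $[1,K]$ into dyadic shells (here $\SL(2,\Z)$ has no elements of norm $<\sqrt 2$, so the smallest shell contributes $O(1)$) shows the full sum is bounded, hence convergent as $K\to\infty$; convergence also follows immediately from the tail estimate since the tail tends to $0$.

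I expect the main (minor) obstacle to be pinning down the lattice-point count $\#\{\ga\in\SL(2,\Z):\|\ga\|\le R\} = O(R^2)$ cleanly. The upper bound $O(R^2)$ — which is all that is needed — can be obtained elementarily: parametrize $\ga = \begin{bmatrix} a & b \\ c & d\end{bmatrix}$ with $ad - bc = 1$; the constraint $a^2+b^2+c^2+d^2 \le R^2$ confines $(a,b,c,d)$ to a ball of radius $R$, and for each of the $O(R^2)$ choices of the pair $(a,c)$ (or $(b,d)$) with $\gcd$ considerations, the equation $ad - bc = 1$ restricts $(b,d)$ to a coset of a rank-one sublattice intersected with a disc of radius $R$, contributing $O(R)$ points, except one must handle the degenerate case $(a,c) = (0,0)$ separately — but that forces $bc = -1$, contributing $O(1)$. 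This gives $O(R^3)$ crudely, but a sharper accounting (using that $a,c$ fixed with $(a,c)\ne(0,0)$ determines $(b,d)$ up to $O(1 + R/\sqrt{a^2+c^2})$ points and summing over $a^2+c^2 \le R^2$) yields $O(R^2\log R)$ or, with more care, $O(R^2)$; in fact for the purposes of this lemma even $O(R^{2+\e})$ for every $\e>0$ suffices, since $\alpha > 2$ gives room, and one can simply replace $\alpha > 2$ by $\alpha \ge 2 + \delta$ in the dyadic sum with $\e < \delta$. Alternatively, I would just cite the known asymptotic $\#\{\|\ga\|\le R\}\sim c R^2$ for $\SL(2,\Z)$ and proceed. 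With the count in hand, the rest is the routine dyadic summation described above.
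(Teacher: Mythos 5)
Your proposal is correct and follows essentially the same route as the paper: both arguments rest on the lattice-point count $\#\{\gamma\in\SL(2,\Z):\|A\gamma B\|\le\tau\}=O_{A,B}(\tau^{2})$ (the paper cites the asymptotic $N(\tau)=c\tau^{2}+O_{\eta}(\tau^{5/3+\eta})$ from Gorodnik--Nevo and transfers it to general $A,B$ by the same operator-norm equivalence you invoke), and then convert that count into the sum estimate using $\alpha>2$. The only differences are technical packaging: you sum over dyadic shells and a geometric series, whereas the paper uses Abel summation against the integral $\int\alpha N(\tau)\tau^{-\alpha-1}\,d\tau$; your sketched elementary $O(R^{2})$ count is a self-contained alternative to the paper's citation, but is not needed if one accepts that reference.
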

\begin{proof}
Let $0=\tau_0<\tau_1<\tau_2<\tau_3 \cdots$ be an enumeration of $\|A\ga B\|$ with $\ga \in \SL(2,\Z)$. For $i=1,2,3\cdots$ and $\tau>0$, let 
\begin{equation}
    N(\tau):=\#\{\ga\in \SL(2,\Z):\|A\ga B \|\le \tau\}
\end{equation}
and for $K>0$ $M(K):=\max\{i:N(\tau_i)\le K\}$. Observe that $M(K)\gtrsim
 K$ and  $N(\tau_i)-N(\tau_{i-1})=\#\{\ga\in \SL(2,\Z):\|A\ga B \|=\tau_i\}$.


It follows from \cite{GorodnikNevo2012} (see Appendix B for computation details) that for $A=B=I_2$
\begin{equation}
    N(\tau)=c\tau^2+O_{\eta}(\tau^{\frac{5}{3}+\eta}).
\end{equation}

Since for operator norms we have the inequality $$\|A^{-1}\|^{-1}_{\text{op}}\|\ga \|_{\text{op}}\|B^{-1}\|^{-1}_{\text{op}} \le \|A\ga B\|_{\text{op}}\le \|A\|_{\text{op}}\|\ga\|_{\text{op}}\|B\|_{\text{op}},$$
and all norms on finite-dimensional spaces are equivalent, we have for general $A$ and $B$, $N(\tau)\asymp O_{A,B}(\tau^2)$. For $K>0$,

\begin{align}
     \sum_{\substack{\ga\in \SL(2,\Z),\\ \|A\ga B\|\ge K}}\frac{1}{\|A\ga B\|^{\alpha}}
     =&\sum_{i=M(K)}^{\infty}\frac{N(\tau_i)-N(\tau_{i-1})}{\tau_i^\alpha}\\
     =&\sum_{i=M(K)}^{\infty}\frac{N(\tau_i)}{\tau_i^\alpha}-\sum_{i=1}^{\infty}\frac{N(\tau_{i})}{\tau_{i+1}^\alpha}\tag{sum is convergence when $\alpha>2$}\\
     =&\sum_{i=M(K)}^{\infty}\left(\frac{N(\tau_i)}{\tau_i^\alpha}-\frac{N(\tau_{i})}{\tau_{i+1}^\alpha}\right)\\
     = &\sum_{i=M(K)}^{\infty}\int_{\tau_i}^{\tau_{i+1}}\frac{\alpha N(\tau)}{\tau^{\alpha+1}}d\tau\\ 
      =&\int_{\tau_{M(K)}}^{\infty}\frac{\alpha N(\tau)}{\tau^{\alpha+1}}d\tau\\ 
     \lesssim_{A,B} &\int_{\tau_{M(K)}}^{\infty}\frac{\alpha c }{\tau^{\alpha-1}}d\tau+\int_{\tau_{M(K)}}^{\infty}\frac{\alpha O_{\eta}(\tau^{\frac{5}{3}+\eta}) }{\tau^{\alpha+1}}d\tau\\ 
     \asymp & \tau_{M(K)}^{2-\alpha} \le K^{2-\alpha}
    \end{align}

This gives \eqref{head estimate for the sum over SL2}. The proof for \eqref{tail estimate for the sum over SL2} is similar.
\end{proof}

\vspace{5mm}
The following properties follow immediately from \eqref{measure of H_T}:
\begin{proposition}[Uniform volume growth for skewed balls in $H$, property D1 in \cite{Gorodnik2004DistributionOL}]
For any bounded subset $B\subset G$ and any $\e>0$, there are $T_0$ and $\de>0$ such that for all $T>T_0$ and all $g_1,g_2 \in B$ we have:
\begin{equation}
    \mu \left(H_{(1+\de)T}[g_1,g_2] \right)\le (1+\e)\mu \left(H_T[g_1,g_2] \right).
\end{equation}
\end{proposition}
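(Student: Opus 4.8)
The plan is to read the statement straight off the asymptotic formula \eqref{measure of H_T}. Write
$$c(g_1,g_2):=\frac{\pi}{6\,G_4^2\,|\det(H_1)|}\sum_{\ga\in \SL(2,\Z)}\frac{1}{\|G_1\ga H_1\|^{4}}$$
for the leading coefficient occurring there, so that \eqref{measure of H_T} reads $\mu(H_T[g_1,g_2])=c(g_1,g_2)\,T^6+O_{g_1,g_2}(T^4)$, the implied constant being uniform for $g_1,g_2$ in a bounded set by the last sentence of Proposition \ref{computation for H and V}. First I would reduce to the case that $B$ is compact: a bounded subset of $G=\SL(3,\R)$ has compact closure in $3\times 3$ matrix space, and since $\det\equiv 1$ on it the closure lies in $G$, so replacing $B$ by its closure only strengthens the conclusion.

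The substance is then to show that $c(g_1,g_2)$ stays bounded above and bounded below by a positive constant as $(g_1,g_2)$ ranges over $B\times B$. By the Iwasawa decomposition \eqref{matrix representation of g1 and g2} the blocks $G_1,G_4,H_1,H_4$ are continuous functions of $(g_1,g_2)$, and invertibility of $g_1,g_2$ forces $G_4\neq 0$, $H_4\neq 0$ and $G_1,H_1\in \GL(2,\R)$; hence on the compact set $B\times B$ the quantities $|G_4|$ and $|\det(H_1)|$ are bounded away from $0$ while $\|G_1^{\pm 1}\|_{\text{op}}$ and $\|H_1^{\pm 1}\|_{\text{op}}$ are bounded above. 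The tail estimate \eqref{tail estimate for the sum over SL2}, whose implied constant depends only on the operator norms of the two fixed matrices and their inverses, then shows that $\sum_{\ga}\|G_1\ga H_1\|^{-4}$ converges uniformly on $B\times B$, so $c$ is continuous there; it is also strictly positive, since the summand for $\ga=I_2$ is $\|G_1H_1\|^{-4}>0$. Consequently there are constants $0<c_1\le c_2<\infty$ and $M<\infty$, all depending only on $B$, with $c_1\le c(g_1,g_2)\le c_2$ and $|\mu(H_T[g_1,g_2])-c(g_1,g_2)T^6|\le M T^4$ for every $T\ge 1$ and every $g_1,g_2\in B$.

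Finally I would carry out the routine ratio estimate. For $0<\de\le 1$, $T\ge 1$, and $g_1,g_2\in B$, using $(1+\de)^4\le 16$,
$$\frac{\mu\!\left(H_{(1+\de)T}[g_1,g_2]\right)}{\mu\!\left(H_T[g_1,g_2]\right)}\;\le\;\frac{c(g_1,g_2)(1+\de)^6T^6+16M T^4}{c(g_1,g_2)T^6-M T^4}\;\le\;\frac{(1+\de)^6+16M/(c_1T^2)}{1-M/(c_1T^2)},$$
valid once $T^2>M/c_1$ so that the denominator is positive; the last inequality uses $c(g_1,g_2)\ge c_1$ together with monotonicity of $u\mapsto \big((1+\de)^6+16u\big)/(1-u)$ on $[0,1)$. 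Given $\e>0$, choose $\de\in(0,1]$ with $(1+\de)^6\le 1+\e/2$, and then $T_0$ so large that the right-hand side is $\le 1+\e$ for all $T>T_0$; this $T_0$ is uniform over $g_1,g_2\in B$ because $c_1$ and $M$ depend only on $B$. This yields $\mu(H_{(1+\de)T}[g_1,g_2])\le(1+\e)\mu(H_T[g_1,g_2])$, as required. No step is genuinely hard; the only point needing care is the uniformity over $B$ of all implied constants — both the $O(T^4)$ error in \eqref{measure of H_T} and the convergence and positivity of the leading coefficient — which is precisely where the tail estimate \eqref{tail estimate for the sum over SL2}, with constants controlled by the operator norms of the Iwasawa blocks, enters.
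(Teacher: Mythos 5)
Your proof is correct and follows the same route the paper intends: the paper simply asserts that property D1 "follows immediately" from the volume asymptotic \eqref{measure of H_T} together with the uniformity of the error term stated at the end of Proposition \ref{computation for H and V}, and your argument is exactly the fleshing-out of that claim (uniform two-sided bounds on the leading coefficient via the tail estimate \eqref{tail estimate for the sum over SL2}, then the routine ratio computation). No gaps.
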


\begin{proposition}[Limit volume ratios, property D2 in \cite{Gorodnik2004DistributionOL}]
For any $g_1,g_2 \in G$. the limit 
\begin{equation}
    \alpha(g_1,g_2):=\lim_{T\to \infty}\frac{\mu \left(H_T[g_1^{-1},g_2] \right)}{\mu \left(H_T \right)},
\end{equation}
exists and is positive and finite. 
\end{proposition}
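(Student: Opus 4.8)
The plan is to read this off directly from the volume asymptotics established in Proposition~\ref{computation for H and V}. First I would apply that proposition with $g_1^{-1}$ in place of $g_1$: writing Iwasawa-type decompositions
$$g_1^{-1}=k_1'\begin{bmatrix} G_1' & 0 \\ G_3' & G_4' \end{bmatrix},\qquad g_2=\begin{bmatrix} H_1 & 0 \\ H_3 & H_4 \end{bmatrix}k_2$$
as in \eqref{matrix representation of g1 and g2}, Proposition~\ref{computation for H and V} gives
$$\mu\!\left(H_T[g_1^{-1},g_2]\right)=\frac{\pi}{(G_4')^2\,|\det(H_1)|}\cdot\frac{T^6}{6}\sum_{\ga\in\SL(2,\Z)}\frac{1}{\|G_1'\ga H_1\|^4}+O_{g_1,g_2}(T^4),$$
while the case $g_1=g_2=e$ of the same proposition gives $\mu(H_T)=\frac{\pi}{6}\big(\sum_{\ga}\|\ga\|^{-4}\big)T^6+O(T^4)$. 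Dividing these two expressions, the error terms are of relative size $O(T^{-2})$, so the ratio converges as $T\to\infty$ to
$$\alpha(g_1,g_2)=\frac{\sum_{\ga\in\SL(2,\Z)}\|G_1'\ga H_1\|^{-4}}{(G_4')^2\,|\det(H_1)|\,\sum_{\ga\in\SL(2,\Z)}\|\ga\|^{-4}},$$
which in particular shows the limit exists; it is manifestly independent of the chosen decomposition, since the two volumes in the ratio are.

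It then remains to verify that the right-hand side is a positive real number. For finiteness, the series $\sum_{\ga}\|G_1'\ga H_1\|^{-4}$ converges by the preceding Lemma (the head estimate \eqref{head estimate for the sum over SL2} applied with $A=G_1'$, $B=H_1$, $\alpha=4>2$), and likewise $\sum_{\ga}\|\ga\|^{-4}<\infty$; the scalar $(G_4')^2|\det(H_1)|$ is of course finite. For positivity, every term of each series is strictly positive, so both sums are positive, and the denominator scalar does not vanish: in the decomposition \eqref{matrix representation of g1 and g2} of an element $g\in\SL(3,\R)$ one has $1=\det g=\det(k)\det(G_1)G_4=\det(G_1)G_4$, forcing $G_4\ne 0$ and $\det(G_1)\ne 0$; applying this to $g_1^{-1}$ yields $G_4'\ne 0$ and to $g_2$ yields $\det(H_1)\ne 0$. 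Hence $\alpha(g_1,g_2)\in(0,\infty)$, as claimed.

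I do not expect a genuine obstacle here: the statement is essentially a corollary of Proposition~\ref{computation for H and V}, and the only two points needing attention are the convergence of the $\SL(2,\Z)$-sums, which is exactly the content of the preceding Lemma, and the non-vanishing of the leading constants, which is immediate from the determinant identity forced by $g_1,g_2\in\SL(3,\R)$. Uniformity of the error term over bounded sets of $(g_1,g_2)$ is available from Proposition~\ref{computation for H and V} but is not needed for the mere existence, positivity and finiteness asserted here.
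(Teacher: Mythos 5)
Your proposal is correct and follows essentially the same route as the paper: the paper's proof is exactly to read the limit off from Proposition \ref{computation for H and V}, dividing the $T^6$ asymptotics for $\mu(H_T[g_1^{-1},g_2])$ and $\mu(H_T)$ and writing the resulting explicit formula for $\alpha$. Your added verifications of convergence of the $\SL(2,\Z)$-sums (via the head estimate with exponent $4>2$) and of the non-vanishing of $G_4'$ and $\det(H_1)$ are details the paper leaves implicit, but they are correct and do not change the argument.
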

\begin{proof}
It follows from Proposition \ref{computation for H and V} that 
\begin{equation}\label{definition and computation of alpha}
    \alpha(g_1^{-1},g_2):=\lim_{T\to \infty}\frac{\mu \left(H_T[g_1,g_2] \right)}{\mu \left(H_T \right)}=\frac{1}{G_4^2 |\det(H_1)|}\frac{\sum_{\ga\in \SL(2,\Z)}\frac{1}{\|G_1\ga H_1\|^4}}{\sum_{\ga\in \SL(2,\Z)}\frac{1}{\|\ga \|^4}}.
\end{equation}
\end{proof}

\section{The $U$-invariance of limiting measure}
Let $F\in C_c(G/\Ga)$, for $g_1,g_2\in G$ with matrix representation as in \eqref{matrix representation of g1 and g2}, recall the measure defined via the funtional 
\begin{equation}
    \phi_{\mu}(F,T)=\phi_{\mu}(F,T,g_1,g_2):=\frac{1}{\mu \left(H_T[g_1,g_2] \right)}\int_{H_T[g_1,g_2]}F(h^{-1}g_1\Gamma)d\mu(h)
\end{equation}

By Banach-Alaoglu theorem, for any fixed, $\phi_{\mu}(\cdot,T), T>0$ form a sequencially compact set in weak-star topology. By passing to a subsequence if necessary and with an abuse of notations, henceforth we assume that $\lim_{T\to \infty}\phi_{\mu}(F,T)$ exists. 

The main goal of this section is to prove that the limiting measure is $U$-invariant:
\begin{theorem}\label{unipotent invariance of haar measure}
$\lim_{T\to \infty}\phi_{\mu}(F,T)$ is left-invariant under $U$, where $U:=\begin{bmatrix}
I_2 & 0 \\
\mathbb{\R}^2 & 1 
\end{bmatrix}$. Specifically, for any $u_0\in U$, define $L_{u_0}(F)(x):=F(u_0^{-1}x)$, then 
\begin{equation}
    \lim_{T\to \infty}\left(\phi_{\mu}(F,T)-\phi_{\mu}(L_{u_0} F,T)\right)=0.
\end{equation}
\end{theorem}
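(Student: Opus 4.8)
The plan is to show that right translation by $u_0\in U$ perturbs the skewed ball $H_T[g_1,g_2]$ by a set whose measure is of strictly smaller order than $\mu(H_T[g_1,g_2])$; since that set controls the difference of the two functionals, $U$-invariance of the limit follows. The first step is a reduction. The left Haar measure $\mu$ is in fact also right $U$-invariant — one sees this either from the normality of $U$ in $H$ together with the fact that its elements are unipotent (so the modular character of $H$ is trivial on $U$), or directly from the parametrisation $d\mu=dv\,t^{-4}\,dt\,d\ga$, under which right multiplication by $u_{v_0}$ acts as the measure-preserving map $v\mapsto v+t^{1/2}v_0\ga^{-1}$, $t\mapsto t$, $\ga\mapsto\ga$. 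Writing $u_0^{-1}h^{-1}=(hu_0)^{-1}$ and substituting $h\mapsto hu_0$ in \eqref{definition of measure funtional} then gives
\begin{equation*}
\phi_\mu(L_{u_0}F,T)=\frac{1}{\mu(H_T[g_1,g_2])}\int_{H_T[g_1,g_2]u_0}F(h^{-1}g_1\Ga)\,d\mu(h),
\end{equation*}
so that, since $F$ is bounded,
\begin{equation*}
\bigl|\phi_\mu(F,T)-\phi_\mu(L_{u_0}F,T)\bigr|\le\|F\|_\infty\,\frac{\mu\bigl(H_T[g_1,g_2]\,\triangle\,H_T[g_1,g_2]u_0\bigr)}{\mu(H_T[g_1,g_2])}.
\end{equation*}
By Proposition~\ref{computation for H and V} the denominator is $\asymp_{g_1,g_2}T^6$, so it will be enough to prove $\mu\bigl(H_T[g_1,g_2]\triangle H_T[g_1,g_2]u_0\bigr)=O_{g_1,g_2,u_0}(T^5)$.

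For that bound I would argue fibrewise. By \eqref{decomposition of H_T into V_T}, and because right multiplication by $u_0$ preserves each of the sets $UA\ga$, the symmetric difference splits as $\bigsqcup_{\ga\in\SL(2,\Z)}\bigl(V_{\ga,T}[g_1,g_2]\triangle V_{\ga,T}[g_1,g_2]u_0\bigr)$. Using $h=u_va_t\ga$ together with the conjugation identity $u_va_t\ga\cdot u_{v_0}=u_{v+t^{1/2}v_0\ga^{-1}}\,a_t\ga$, the $v$-slice of $V_{\ga,T}[g_1,g_2]u_0$ at height $t$ is the ellipse $D_{\ga,t}$ of \eqref{the notation D for the disk} translated by $t^{1/2}v_0\ga^{-1}$. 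I would then pass to the linear coordinate $w=v\ga H_1$ (Jacobian $|\det H_1|$), in which $D_{\ga,t}$ becomes a round disc of radius $R_\ga(t)=G_4^{-1}\sqrt{-G_4^2H_4^2t^3+tT^2-\|G_1\ga H_1\|^2}$, while the translation becomes $t^{1/2}v_0H_1$, of Euclidean length $c_0 t^{1/2}$ with $c_0:=\|v_0H_1\|$ \emph{independent of $\ga$ and of $t$}. Since the symmetric difference of a disc of radius $R$ and its translate by a vector of length $d$ has area at most $6\pi Rd$, this gives
\begin{equation*}
m\bigl(D_{\ga,t}\triangle(D_{\ga,t}+t^{1/2}v_0\ga^{-1})\bigr)\le\frac{6\pi}{|\det H_1|}\,R_\ga(t)\,c_0\,t^{1/2}\le\frac{6\pi c_0}{G_4|\det H_1|}\,Tt,
\end{equation*}
the last inequality using the crude bound $R_\ga(t)\le G_4^{-1}T\sqrt t$. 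Integrating against $t^{-4}\,dt$ over the support $(a_{\ga,T},b_{\ga,T})$ of $D_{\ga,t}$ and using $a_{\ga,T}\asymp\|G_1\ga H_1\|^2/T^2$ from \eqref{estimate for a in all range} yields $\mu\bigl(V_{\ga,T}[g_1,g_2]\triangle V_{\ga,T}[g_1,g_2]u_0\bigr)\ll_{g_1,g_2,u_0}T^5/\|G_1\ga H_1\|^4$; summing over $\ga$ and invoking the convergence of $\sum_\ga\|G_1\ga H_1\|^{-4}$ (the lemma following Proposition~\ref{computation for H and V}) gives the desired $O_{g_1,g_2,u_0}(T^5)$, which together with the reduction step completes the proof.

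The step I expect to be the main obstacle is the uniformity of the fibrewise estimate over $\ga$. For $\ga$ with small $\|G_1\ga H_1\|$ the set $V_{\ga,T}[g_1,g_2]$ is supported on $t$-intervals reaching close to $t=0$, where the Haar density $t^{-4}$ blows up, and a naive perimeter-times-displacement estimate of the symmetric difference is not summable over $\SL(2,\Z)$. The two features that rescue the argument are: (i) the coordinate $w=v\ga H_1$, chosen so that the displacement has length $c_0 t^{1/2}$ with $c_0$ independent of $\ga$; and (ii) the bound $R_\ga(t)\le G_4^{-1}T\sqrt t$, which after integration against $t^{-3}\,dt$ converts the singularity at $t=0$ into the factor $a_{\ga,T}^{-2}\asymp T^4/\|G_1\ga H_1\|^4$, hence into the convergent series already used in Section~2.
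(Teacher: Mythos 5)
Your overall strategy is sound and is essentially a streamlined, global repackaging of the paper's argument: the paper also reduces matters to the area of the symmetric difference of each elliptic slice $D_{\ga,t}$ and its translate, but does so fibre by fibre in $\ga$ (splitting the $t$-range at $c=\|G_1\ga H_1\|^{1.5}/T^{1.5}$ and bounding the two pieces separately) and then needs an extra convex-combination lemma to pass from fixed $\ga$ to the full sum. Your version, which converts left translation of the integrand into right translation of the domain via the right $U$-invariance of $\mu$ and then bounds $\mu\bigl(H_T[g_1,g_2]\triangle H_T[g_1,g_2]u_0\bigr)$ directly with a bound summable over $\ga$, avoids that lemma and gives a rate. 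The reduction step itself (right $U$-invariance of $\mu$, the substitution $h\mapsto hu_0$, the bound by the normalized symmetric difference) is correct.

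However, there is a concrete computational error in the conjugation formula that propagates through all your intermediate estimates. One has $a_t\ga\, u_{v_0}\,(a_t\ga)^{-1}=u_{t^{3/2}v_0\ga^{-1}}$, not $u_{t^{1/2}v_0\ga^{-1}}$ (this is exactly the identity used in the proof of Proposition \ref{Haar measure on $H$}). So the $v$-slice of $V_{\ga,T}[g_1,g_2]u_0$ at height $t$ is $D_{\ga,t}$ translated by $t^{3/2}v_0\ga^{-1}$, and in your coordinate $w=v\ga H_1$ the displacement has length $c_0t^{3/2}$, not $c_0t^{1/2}$. The corrected slice estimate is therefore $m\bigl(D_{\ga,t}\triangle(D_{\ga,t}+t^{3/2}v_0\ga^{-1})\bigr)\ll R_\ga(t)\,t^{3/2}\ll Tt^{2}$, and integrating $Tt^{2}\cdot t^{-4}$ over $(a_{\ga,T},b_{\ga,T})$ gives $\mu\bigl(V_{\ga,T}[g_1,g_2]\triangle V_{\ga,T}[g_1,g_2]u_0\bigr)\ll T/a_{\ga,T}\asymp T^{3}/\|G_1\ga H_1\|^{2}$, not $T^{5}/\|G_1\ga H_1\|^{4}$. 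This changes the last step: $\sum_\ga\|G_1\ga H_1\|^{-2}$ diverges (the counting function $N(\tau)\asymp\tau^2$ forces $\alpha>2$ in the convergence lemma), so you cannot invoke convergence of the series over all of $\SL(2,\Z)$. The fix is to note that both $V_{\ga,T}[g_1,g_2]$ and its translate are empty unless $\|G_1\ga H_1\|^{2}<M_T\asymp T^{3}$, and a dyadic summation using $N(\tau)\asymp\tau^2$ over this range gives $\sum_{\|G_1\ga H_1\|\lesssim T^{3/2}}T^{3}\|G_1\ga H_1\|^{-2}=O(T^{3}\log T)$, which is still $o(T^{6})$. With these corrections your proof goes through (and in fact yields a better bound than the $O(T^5)$ you claimed), but as written the displacement exponent, the per-$\ga$ bound, and the summation step are all incorrect and must be repaired as indicated.
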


In view of \eqref{decomposition of H_T into V_T}, we can decompose $\phi_{\mu}(F,T)$ as the following convex linear combination:
\begin{equation}
    \phi_{\mu}(F,T)= \sum_{\substack{\ga\in \SL(2,\Z),\\ \|G_1\ga H_1\|^2<M_T}}\frac{\mu \left(V_{\ga,T}[g_1,g_2] \right)}{\mu \left(H_T[g_1,g_2] \right)}\cdot \frac{1}{\mu \left(V_{\ga,T}[g_1,g_2] \right)}\int_{V_{\ga,T}[g_1,g_2]} F(h^{-1}g_1\Gamma) d\mu(h).
\end{equation}

\vspace{5mm}
We will first prove that individual terms are $U$-invariant. The following elementary lemma will be needed: 

\begin{lemma}\label{lemma on symmetric difference}
Let $f:\R^{d}\to \R$ be a bounded continuous function. Let $E\subset \R^{d}$ be a ellipsoid with surface area $S$, then for $y\in \R^{d}$, with $\|y\|$ less than the shortest radius of $E$, we have the estimate:
\begin{equation}
    \left|\int_{E}[f(v)-f(y+v)]dv\right| \le C_f \|y\|S,
\end{equation}
where $C_f$ is a constant depending on $f$.
\end{lemma}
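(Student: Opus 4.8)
The plan is to reduce the statement to a purely geometric estimate for the symmetric difference $E\triangle(E+y)$. First I would rewrite
$$\int_E\bigl[f(v)-f(y+v)\bigr]\,dv=\int_{\R^d}f(v)\bigl(\mathbf 1_E(v)-\mathbf 1_{E+y}(v)\bigr)\,dv,$$
so that the integrand is supported on $E\triangle(E+y)$, whence
$$\left|\int_E\bigl[f(v)-f(y+v)\bigr]\,dv\right|\le \|f\|_\infty\cdot m\bigl(E\triangle(E+y)\bigr),$$
with $m$ Lebesgue measure. Thus it suffices to prove $m\bigl(E\triangle(E+y)\bigr)\le 2\|y\|\,S$ and then take $C_f:=2\|f\|_\infty$. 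Note that only boundedness and measurability of $f$ enter; continuity is just a convenient sufficient hypothesis.

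Second, to bound $m\bigl(E\triangle(E+y)\bigr)$ I would slice along the direction of $y$. Write $y=\|y\|\,u$ with $u$ a unit vector, and for $w$ in the hyperplane $u^\perp$ let $\ell_w:=E\cap(w+\R u)$, a (possibly empty) segment by convexity of $E$. For each $w$, the one-dimensional symmetric difference of $\ell_w$ with its translate $\ell_w+\|y\|u$ has total length $2\min\bigl(\|y\|,\operatorname{length}(\ell_w)\bigr)\le 2\|y\|$, and it is empty unless $w$ lies in the orthogonal projection $\pi_{u^\perp}(E)$. Integrating over $u^\perp$ by Fubini gives
$$m\bigl(E\triangle(E+y)\bigr)\le 2\|y\|\cdot\mathcal H^{d-1}\bigl(\pi_{u^\perp}(E)\bigr).$$

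Third, I would control the projection by the surface area. Since $\pi_{u^\perp}$ is $1$-Lipschitz and, for the convex body $E$, $\pi_{u^\perp}(E)=\pi_{u^\perp}(\partial E)$ (each fiber of the projection is a segment with endpoints on $\partial E$), the area formula for Lipschitz maps yields $\mathcal H^{d-1}\bigl(\pi_{u^\perp}(E)\bigr)\le \mathcal H^{d-1}(\partial E)=S$; alternatively Cauchy's projection formula gives the sharper $\mathcal H^{d-1}\bigl(\pi_{u^\perp}(E)\bigr)=\tfrac12\int_{\partial E}|\langle n,u\rangle|\,d\mathcal H^{d-1}\le S/2$. Combining the three steps gives $\bigl|\int_E(f(v)-f(y+v))\,dv\bigr|\le 2\|f\|_\infty\,\|y\|\,S$. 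The hypothesis that $\|y\|$ be smaller than the shortest semi-axis of $E$ is not actually needed for this argument (it merely ensures $E\cap(E+y)\ne\emptyset$), but since it holds in every application we retain it.

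The only mildly delicate point — and hence the step to state carefully — is the geometric bound $m\bigl(E\triangle(E+y)\bigr)\le C\|y\|\,S$: one must handle empty and short slices correctly in the Fubini step and make sure the reduction to $\pi_{u^\perp}(E)$ is valid for a general (possibly very eccentric) ellipsoid rather than a ball. Everything else is routine.
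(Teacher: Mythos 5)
Your proof is correct, and its first step coincides exactly with the paper's: rewrite the integral as $\int_E f - \int_{E+y} f$ and bound it by $\|f\|_\infty\, m\bigl(E\triangle(E+y)\bigr)$. Where you diverge is in how the geometric estimate $m\bigl(E\triangle(E+y)\bigr)\lesssim \|y\|\,S$ is obtained: the paper simply cites Theorem 1 of an external reference on symmetric differences of convex bodies, whereas you prove it self-containedly by slicing along the direction of $y$ (each line parallel to $y$ meets the symmetric difference in a set of length at most $2\|y\|$, and the relevant slices are parametrized by the shadow $\pi_{u^\perp}(E)$), then bounding the shadow by the surface area via Cauchy's projection formula or the $1$-Lipschitz area estimate. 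Your route buys an explicit constant ($C_f=\|f\|_\infty$ with Cauchy, $2\|f\|_\infty$ without), works for arbitrary convex bodies rather than just ellipsoids, and removes any dependence on the eccentricity of $E$ --- which matters here, since in the application the ellipse $D_{\gamma,t}$ degenerates as $\gamma$ and $t$ vary; it also makes explicit that continuity of $f$ and the hypothesis $\|y\|<$ (shortest radius) are superfluous. The paper's route is shorter but leaves the key input as a black box (and its displayed computation is written for balls $B(x,r)$ even though the lemma concerns ellipsoids). Both are valid; yours is the more careful and more general argument.
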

\begin{proof}
Indeed, 
\begin{align*}
    \left|\int_{B(x,r)}[f(v)-f(y+v)]dv\right|
  = &\left|\int_{B(x,r)}f(v)dv - \int_{B(x+y,r)}f(v)dv\right|\\
  \le & \int_{B(x,r)\triangle
 B(x+y,r)}|f(v)|dv\\
  \le & C_f \|y\|S, \tag{$f$ is bounded}
\end{align*}
where the last line follows from the Theorem 1 of \cite{symmetricdifferencesch2010}.
\end{proof}

\begin{lemma}For any $\ga\in \SL(2,\Z)$ with $\|G_1\ga H_1\|^2<M_T$, let
\begin{equation}
    \phi_{\ga,\mu}(F,T):=\frac{1}{\mu \left(V_{\ga,T}[g_1,g_2] \right)}\int_{V_{\ga,T}[g_1,g_2]} F(h^{-1}g_1\Gamma) d\mu(h).
\end{equation}
For any $u_0\in U$, define $L_{u_0}(F)(x):=F(u_0^{-1}x)$, then 
\begin{equation}
    \lim_{T\to \infty}\left(\phi_{\ga,\mu}(F,T)-\phi_{\ga,\mu}(L_{u_0} F,T)\right)=0.
\end{equation}
\end{lemma}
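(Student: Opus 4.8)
The plan is to show that the averaging operator over the skewed slice $V_{\ga,T}[g_1,g_2]$ becomes insensitive to a left translation by a fixed $u_0 \in U$ as $T\to\infty$, by comparing the two integrals via an explicit change of variables on the parameter $v\in\R^2$. First I would parametrize $V_{\ga,T}[g_1,g_2]$ concretely: by Proposition~\ref{Haar measure on $H$}, writing $h = u_v a_t \ga$, we have
\begin{equation*}
\phi_{\ga,\mu}(F,T) = \frac{1}{\mu(V_{\ga,T}[g_1,g_2])}\int_0^\infty \int_{D_{\ga,t}} F\bigl((u_v a_t\ga)^{-1} g_1\Ga\bigr)\, dv\, \frac{dt}{t^4},
\end{equation*}
where $D_{\ga,t}$ is the ellipse from \eqref{the notation D for the disk}. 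The key computation is to identify how $L_{u_0}F$, i.e. precomposition of the argument with $u_0^{-1}$, interacts with this parametrization. Since $u_0^{-1}(u_v a_t\ga)^{-1} = (u_v a_t\ga \, (a_t\ga)^{-1} u_0 (a_t\ga))^{-1} \cdot (\text{adjust})$ — more precisely one uses that $U$ is normal in $H$ and that conjugating $u_0$ by $(a_t\ga)^{-1}$ multiplies the $\R^2$-coordinate by a factor of $t^{3/2}$ (times $\ga^{-1}$), exactly as in the proof of Proposition~\ref{Haar measure on $H$} — we see that $F((u_v a_t\ga)^{-1} u_0 g_1\Ga)$ is of the form $F((u_{v + w(t)}a_t\ga)^{-1}g_1\Ga)$ for a translation vector $w(t)$ whose norm grows polynomially in $t$, hence is $O(T^{3/2+\e})$ on the relevant $t$-range (recall $t$ ranges over $(a,b)$ with $b < \sqrt{3}\,t_0 = \Theta(T)$). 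Wait — I need $u_0$ acting on the \emph{left} of $g_1$; in fact the statement is $L_{u_0}F(x) = F(u_0^{-1}x)$, so $\phi_{\ga,\mu}(L_{u_0}F,T)$ integrates $F(u_0^{-1} h^{-1} g_1\Ga)$, and since $u_0^{-1}h^{-1} = (h u_0)^{-1}$ with $h u_0 = u_v a_t\ga u_0 = u_v \,(a_t\ga u_0 (a_t\ga)^{-1})\, a_t\ga = u_{v + t^{3/2} u_0' \ga^{-1}} a_t \ga$ (writing $u_0 = u_{u_0'}$), the translation in $v$ is by $t^{3/2} u_0'\ga^{-1}$.

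Having reduced to this, I would write
\begin{equation*}
\phi_{\ga,\mu}(F,T) - \phi_{\ga,\mu}(L_{u_0}F,T) = \frac{1}{\mu(V_{\ga,T})}\int_0^\infty \left(\int_{D_{\ga,t}} \bigl[\widetilde F(v) - \widetilde F(v + t^{3/2}u_0'\ga^{-1})\bigr]\,dv\right)\frac{dt}{t^4},
\end{equation*}
where $\widetilde F(v) := F((u_v a_t\ga)^{-1}g_1\Ga)$ depends also on $t,\ga$ but is a bounded continuous function of $v$ with bound uniform in $t$. For each fixed $t$ I apply Lemma~\ref{lemma on symmetric difference} with $E = D_{\ga,t}$, $y = t^{3/2}u_0'\ga^{-1}$: this bounds the inner integral by $C_F \, \|t^{3/2}u_0'\ga^{-1}\| \cdot S(D_{\ga,t})$, where $S(D_{\ga,t})$ is the perimeter of the ellipse. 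The perimeter is comparable to $\sqrt{\text{(area)}}$ up to the ellipse's eccentricity; using that the semi-axes of $D_{\ga,t}$ are controlled by $G_1,\ga,H_1$ (the eccentricity depends on $\ga$ but not $T$) and the area is $\Theta\bigl((-G_4^2H_4^2 t^3 + tT^2 - \|G_1\ga H_1\|^2)\bigr) = O(T^3)$ on $(a,b)$, we get $S(D_{\ga,t}) = O_{\ga}(T^{3/2})$. Hence the inner integral is $O_{\ga,u_0}(t^{3/2}\cdot T^{3/2})$, and (being slightly more careful, using $t = \Theta(T)$ on the bulk of the range and that $D_{\ga,t}$ degenerates near the endpoints $a,b$) integrating against $t^{-4}\,dt$ over $(a,b)$ with $b = \Theta(T)$ yields a total of $O_{\ga,u_0}(T^{3/2} \cdot T^{3/2} \cdot T \cdot T^{-4}) = O_{\ga,u_0}(T^{-1})$ — wait, I should double-check by using $\mu(V_{\ga,T}) = \Theta(T^6/\|G_1\ga H_1\|^4)$ from \eqref{estimate for V ga T} in the denominator; even keeping only the crude bound the numerator is $o(T^6)$, so the ratio tends to $0$. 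I also need to handle the caveat in Lemma~\ref{lemma on symmetric difference} that $\|y\|$ be less than the shortest semi-axis of $E$: near $t = a$ and $t = b$ the ellipse $D_{\ga,t}$ shrinks to a point, so this fails there; I would split the $t$-integral into a central region where the hypothesis holds and two small end-regions of length $O(1)$ (in fact $O(T^{-?})$ after rescaling) near $a$ and $b$ where I instead use the trivial bound $2\|F\|_\infty \cdot m(D_{\ga,t})$ and note that $\int_{\text{end}} m(D_{\ga,t}) t^{-4}\,dt$ is lower order than $\mu(V_{\ga,T})$.

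The main obstacle I anticipate is the bookkeeping near the endpoints $a = a_{\ga,T}$ and $b = b_{\ga,T}$ of the $t$-interval: there the ellipse $D_{\ga,t}$ degenerates, Lemma~\ref{lemma on symmetric difference} does not apply directly, and one must check that the contribution of these thin shells is genuinely negligible compared to $\mu(V_{\ga,T})$ — this requires a careful estimate of $m(D_{\ga,t})$ as $t \downarrow a$ or $t \uparrow b$ (the measure vanishes linearly in the defining quadratic, so like $|t-a|$ and $|b-t|$ near the respective roots, modulo the behavior of the double root case which does not occur since $a < b$ strictly). A secondary technical point is making the dependence of the implied constants on $\ga$ explicit enough that, when this lemma is later summed over $\ga \in \SL(2,\Z)$ weighted by $\mu(V_{\ga,T})/\mu(H_T) \asymp \|\ga\|^{-4}$, the resulting series converges and the interchange of limit and sum is justified — but that summation is the business of Theorem~\ref{unipotent invariance of haar measure}, not of this lemma, so here it suffices to obtain the $\ga$-by-$\ga$ statement with any (possibly $\ga$-dependent) rate.
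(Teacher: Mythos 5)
Your proposal is correct and follows essentially the same route as the paper: conjugate $u_0$ past $a_t\ga$ to turn the left translation into a shift of $v$ by $t^{3/2}v_0\ga^{-1}$, apply the symmetric-difference lemma to the ellipse $D_{\ga,t}$ where its hypothesis holds, and use the trivial bound $2\|F\|_\infty m(D_{\ga,t})$ elsewhere, comparing everything against $\mu(V_{\ga,T})=\Theta(T^6/\|G_1\ga H_1\|^4)$. One caveat: the region where the lemma's hypothesis fails near $b$ is \emph{not} a short end-interval — since $\|t^{3/2}v_0\ga^{-1}\|$ grows like $t^{3/2}$ it can exceed the shortest semi-axis on essentially the whole upper part of $(a,b)$ — but your fallback still works because $\int_c^b m(D_{\ga,t})t^{-4}dt=o(T^6)$ for any cutoff $c\gg a$ (the mass of $\mu(V_{\ga,T})$ concentrates near $t\approx a\asymp T^{-2}$), which is exactly why the paper splits at the explicit $c=\|G_1\ga H_1\|^{1.5}/T^{1.5}$ and applies the symmetric-difference lemma only on $[a,c]$.
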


\begin{proof}Assume that $F\in C_c(G/\Ga)$ is bounded by $M_F$.
Recall from Proposition\eqref{computation for H and V} that 
\begin{equation}\label{estimate for the measure of V_T}
   \mu \left(V_{\ga,T}[g_1,g_2] \right)=\Theta \left(\frac{T^6}{\|G_1\ga H_1\|^4} \right), 
\end{equation}
Fix $u_0:=u_{v_0}=U:=\begin{bmatrix}
I_2 & 0 \\
v_0 & 1 
\end{bmatrix}\in U$. The integral
\begin{align*}
    &\left| \int_{V_{\ga,T}[g_1,g_2]} F(h^{-1}g_1\Gamma) d\mu(h)-\int_{V_{\ga,T}[g_1,g_2]} F(u_0^{-1}h^{-1}g_1\Gamma) d\mu(h) \right |\\
    =&\left|\int_{V_{\ga,T}[g_1,g_2]} \left(F(h^{-1}g_1\Gamma) - F(u_0^{-1}h^{-1}g_1\Gamma) \right) d\mu(h)\right |\\
    =&\left|\int_{a}^b \int_{D} \left(F(\ga^{-1} a_t^{-1} u_v^{-1} g_1\Gamma) - F(u_{v_0}^{-1}\ga^{-1} a_t^{-1} u_v^{-1}g_1\Gamma) \right) dv \frac{1}{t^4}dt\right |\\
    =&\left|\int_{a}^b \int_{D} \left(F(\ga^{-1} a_t^{-1} u_v^{-1} g_1\Gamma) - F(\ga^{-1} (\ga u_{v_0}^{-1}\ga^{-1}) a_t^{-1} u_v^{-1}g_1\Gamma) \right) dv \frac{1}{t^4}dt\right |\\
    =&\left|\int_{a}^b \int_{D} \left(F(\ga^{-1} a_t^{-1} u_v^{-1} g_1\Gamma) - F(\ga^{-1}a_t^{-1} [a_t\ga u_{v_0}^{-1}\ga^{-1}a_t^{-1}]  u_v^{-1}g_1\Gamma) \right) dv \frac{1}{t^4}dt\right |\\
    =&\left|\int_{a}^b \int_{D} \left(F(\ga^{-1} a_t^{-1} u_{-v} g_1\Gamma) - F(\ga^{-1}a_t^{-1} u_{-t^{\frac{3}{2}}v_0\ga^{-1}-v}  g_1\Gamma) \right) dv \frac{1}{t^4}dt\right |\\
    =&\left| \left(\int_a^c+\int_c^b \right) \int_{D} \left(F(\ga^{-1} a_t^{-1} u_v^{-1} g_1\Gamma) - F(\ga^{-1}a_t^{-1} u_{-t^{\frac{3}{2}}v_0\ga^{-1}-v}  g_1\Gamma) \right) dv \frac{1}{t^4}dt\right |,\tag{note that $u_v^{-1}=u_{-v}$}
\end{align*}
where, as before, $a=a_{\ga,T}$ and $b=b_{\ga,T}$ are two positive roots of the cubic equation $-G_4^2 H_4^2 t^3+tT^2-\|G_1\ga H_1\|^2=0$ and satisfy $0<a<t_0<b<\sqrt{3}t_0$ with $t_0:=\frac{T}{\sqrt 3 G_4H_4}$. Here 
\begin{equation}\label{the truncation c}
    c=c_{\ga,T}=\frac{\|G_1\ga H_1\|^{1.5}}{T^{1.5}}
\end{equation}
is a parameter for the separation of integral. The domain $D$ is as defined in \eqref{the notation D for the disk}, whose area is $\pi \frac{-G_4^2 H_4^2 t^3+tT^2-\|G_1\ga H_1\|^2}{G_4^2 |\det(H_1)|}$.

The second integral $\int_c^b$ has the estimate
\begin{align*}
    &\left| \int_c^b  \int_{D} \left(F(\ga^{-1} a_t^{-1} u_{-v} g_1\Gamma) - F(\ga^{-1}a_t^{-1} u_{-t^{\frac{3}{2}}v_0\ga^{-1}-v}  g_1\Gamma) \right) dv \frac{1}{t^4}dt \right|\\
    \le &  \int_c^b  \int_{D} 2M_F dv \frac{1}{t^4}dt \\
    = & 2M_F\int_c^b \pi \frac{-G_4^2 H_4^2 t^3+tT^2-\|G_1\ga H_1\|^2}{G_4^2 |\det(H_1)|} \frac{1}{t^4}dt\\
    =& 2M_F\frac{\pi}{G_4^2 |\det(H_1)|} \left[\left(-G_4^2 H_4^2 \log(b)-\frac{T^2}{2b^2}+\frac{\|G_1\ga H_1\|^2}{3b^3}\right)-\left(-G_4^2 H_4^2 \log(c)-\frac{T^2}{2c^2}+\frac{\|G_1\ga H_1\|^2}{3c^3}\right)\right]
\end{align*}

By our choice of $c$ and the bounds for $b$,  after normalization by $V_{\ga,T}[g_1,g_2]=\Theta\left(\frac{T^6}{\|G_1\ga H_1\|^4} \right)$, each one of the six terms in the last line of equation will vanish as $T\to \infty$, provided that $\ga$ is fixed.

Now it remains to estimate the first integral $\int_a^c$.
\begin{align*}
    &\left| \int_a^c  \int_{D} \left(F(\ga^{-1} a_t^{-1} u_{-v} g_1\Gamma) - F(\ga^{-1}a_t^{-1} u_{-t^{\frac{3}{2}}v_0\ga^{-1}-v}  g_1\Gamma) \right) dv \frac{1}{t^4}dt \right|\\
    \le &  C_F \int_a^c  \left( \|-t^{\frac{3}{2}}v_0\ga^{-1}\| \sqrt{\frac{-G_4^2 H_4^2 t^3+tT^2-\|G_1\ga H_1\|^2}{G_4^2 |\det(H_1)|}}\right) \frac{1}{t^4}dt  \tag{by Lemma \ref{lemma on symmetric difference}}\\
    = &  C_F \int_a^c  \left( \|-t^{-\frac{5}{2}}v_0\ga^{-1}\| \sqrt{\frac{-G_4^2 H_4^2 t^3+tT^2-\|G_1\ga H_1\|^2}{G_4^2 |\det(H_1)|}}\right) dt . 
\end{align*}
Observe that for fixed $\ga$, our choice of $c=\frac{\|G_1\ga H_1\|^{1.5}}{T^{1.5}}>>a\asymp \frac{\|G_1\ga H_1\|^{2}}{T^{2}}$ as $T\to \infty$. So the equation above can be bounded from above by (recall that $M_T=\frac{2T^3}{3\sqrt{3}G_4 H_4}$ is the maximum of $-G_4^2 H_4^2 t^3+tT^2$ for $t>0$.)
\begin{align*}
    & (c-a)\cdot C_F\left( \|-a^{-\frac{5}{2}}v_0\ga^{-1}\| \sqrt{\frac{\frac{2T^3}{3\sqrt{3}G_4 H_4}-\|G_1\ga H_1\|^2}{G_4^2 |\det(H_1)|}}\right) \\
    = & O_{F, g_1, g_2}(T^{-1.5})O_{F, g_1, g_2}(T^5\cdot T^{\frac{3}{2}})\\
    = & O_{F, g_1, g_2}(T^5),
\end{align*}
and again, this term goes to zero as $T\to \infty$, after normalization by $V_{\ga,T}[g_1,g_2]=\Theta \left(\frac{T^6}{\|G_1\ga H_1\|^4} \right)$. 
\end{proof}

Then Theorem \ref{unipotent invariance of haar measure} follows from the following elementary lemma from calculus, with $a_{\ga,T}=\phi_{\ga,\mu}(F,T), b_{\ga,T}=\phi_{\ga,\mu}(L_{u_0}F,T)$ and $c_{\ga,T}=\frac{\mu \left(V_{\ga,T}[g_1,g_2] \right)}{\mu \left(H_T[g_1,g_2] \right)}=\Theta_{g_1,g_2}\left(\frac{1}{\|\ga\|^4}\right)$:

\begin{lemma}Let $a_{\ga,T},b_{\ga,T},c_{\ga,T}$ be three bounded (in both $n$ and $T$) real sequences with $\ga \in \SL(2,\Z)$, and $c_{\ga,T}, T\in \R_+$. Assume  
\begin{itemize}
    \item For any $T>0$, we have $\sum_{\ga} c_{\ga,T}=1$;
    \item For any $\e>0$, there exists $N_{\e}$, such that $\sum_{\|\ga\|>N_{\e}} c_{\ga,T}<\e$ for all $T>0$;
    \item For any $\ga$, we have $\lim_{T\to \infty}(a_{\ga,T}-b_{\ga,T})=0$,
\end{itemize}
then $$\lim_{T\to \infty}\sum_{\ga} c_{\ga,T}(a_{\ga,T}-b_{\ga,T})=0.$$
\end{lemma}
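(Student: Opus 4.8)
The plan is a routine $\epsilon/2$ splitting of the sum into a finite ``head'' over small $\|\ga\|$ and a ``tail'' that is small uniformly in $T$; this is just the discrete analogue of dominated convergence, with the second hypothesis playing the role of uniform integrability. I would fix a common bound $M$ with $|a_{\ga,T}|,|b_{\ga,T}|\le M$ for all $\ga\in\SL(2,\Z)$ and $T>0$ (such $M$ exists by assumption; in the application $M=\|F\|_\infty$ works), so that $|a_{\ga,T}-b_{\ga,T}|\le 2M$ throughout. I also record that the coefficients are nonnegative: in the application $c_{\ga,T}=\mu(V_{\ga,T}[g_1,g_2])/\mu(H_T[g_1,g_2])\ge 0$, so $\sum_\ga c_{\ga,T}=1$ genuinely expresses a convex combination, and we may drop absolute values on the $c_{\ga,T}$.

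Now fix $\epsilon>0$. First I would invoke the tail hypothesis with $\epsilon/(4M)$ in place of $\epsilon$ to obtain $N:=N_{\epsilon/(4M)}$ such that $\sum_{\|\ga\|>N}c_{\ga,T}<\epsilon/(4M)$ for every $T>0$; hence the tail contribution satisfies $\sum_{\|\ga\|>N}c_{\ga,T}\,|a_{\ga,T}-b_{\ga,T}|\le 2M\cdot\tfrac{\epsilon}{4M}=\tfrac{\epsilon}{2}$ for all $T$. Next, the set $\{\ga\in\SL(2,\Z):\|\ga\|\le N\}$ is finite, so by the pointwise-convergence hypothesis there is $T_0$ with $|a_{\ga,T}-b_{\ga,T}|<\epsilon/2$ for every $T>T_0$ and every $\ga$ in this finite set; then the head contribution is at most $\tfrac{\epsilon}{2}\sum_{\|\ga\|\le N}c_{\ga,T}\le\tfrac{\epsilon}{2}$.

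Combining the two bounds, $\bigl|\sum_\ga c_{\ga,T}(a_{\ga,T}-b_{\ga,T})\bigr|\le\tfrac{\epsilon}{2}+\tfrac{\epsilon}{2}=\epsilon$ for all $T>T_0$, which yields $\lim_{T\to\infty}\sum_\ga c_{\ga,T}(a_{\ga,T}-b_{\ga,T})=0$. There is no genuine difficulty in this lemma: the only place the hypotheses are used essentially is the uniform-in-$T$ smallness of the tail, which is precisely what legitimizes exchanging the limit with the infinite sum, and in the application this is supplied by the volume estimate $c_{\ga,T}=\Theta_{g_1,g_2}(\|\ga\|^{-4})$ together with the convergence of $\sum_\ga\|\ga\|^{-4}$ established earlier.
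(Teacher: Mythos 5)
Your proof is correct and follows essentially the same head/tail splitting as the paper's own argument: a uniformly small tail via the second hypothesis and pointwise convergence on the finite head, the only cosmetic difference being that you fix an explicit $T_0$ while the paper phrases the same estimate with a $\limsup$ and then lets $\e\to 0$. Your explicit remark that the $c_{\ga,T}$ are nonnegative in the application (so the tail bound controls the sum of absolute values) is a point the paper leaves implicit.
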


\begin{proof}Assume $a_{\ga,T},b_{\ga,T}$ are bounded by $B$. For any $\e>0$, choose $N_{\e}$, such that $\sum_{\|\ga\|>N_{\e}} c_{\ga,T}<\e$ for all $T>0$. Then
\begin{align*}
    \limsup_{T\to \infty}\left| \sum_n c_{\ga,T}(a_{\ga,T}-b_{\ga,T})\right|
    \le&\limsup_{T\to \infty}\left(\left|\sum_{\|\ga\|\le N_{\e}} c_{\ga,T}(a_{\ga,T}-b_{\ga,T})\right|+\left|\sum_{\|\ga\|> N_{\e}} c_{\ga,T}(a_{\ga,T}-b_{\ga,T})\right|\right)\\
    \le & \limsup_{T\to \infty}\left|\sum_{\|\ga\|\le N_{\e}} c_{\ga,T}(a_{\ga,T}-b_{\ga,T})\right|+2B\e\\
    \le& 2B\e
\end{align*}
Letting $\e \to 0$, we are done.
\end{proof}

\section{Proof of Theorem \ref{The $G$-invariance of limiting measure}: The $G$-invariance of limiting measure on $G/\Ga$.} 

Our goal of this section and the next is to generalize Theorem \ref{unipotent invariance of haar measure} from $U$ to $G$. In contrast to the proof of the $U$-invariance in the previous section, our approach here is much more convoluted, which involves the linearization techniques as well as some representation results (cf. \cite{Dani1993LimitDO} and \cite{Shah1996LimitDO}).

To begin with, let $\overline{X} = \overline{G/\Ga}$ denote the one-point compactification of $X=G/\Ga$. Our plan is to show that if $\lim_{T\to \infty} \phi_{\mu}(\cdot,T)=\eta(\cdot)$ (recall this is by applying the Banach-Alaoglu theorem and passing to a subsequence \footnote{recall if any convergent subsequence of a bounded sequence converges to the same limit, then so does the original bounded sequence.}) for some normalized measure $\eta$ on $\overline{X}$, then 
\begin{enumerate}
    \item $\eta(\infty)=0$, and
    \item $\eta$ is $G$-invariant.
\end{enumerate}
As a result we must have $\eta|_X=\mu_X$.

\vspace{3mm}

To prove the first statement, we shall utilize the following theorem due to Shah \cite{Shah1996LimitDO} (also cf. \cite{Dani1993LimitDO}) on the divergence of polynomial trajectories. For a general connected semisimple Lie group $G$ without compact factors and $\Ga$ a lattice in $G$. Let $\frak g$ be the Lie algebra of $G$. For positive integers $d$ and $n$, denote by
$\mathcal {P}_{d,n}(G)$ the set of functions $q: \R^n \to G$ such that for any $a, b \in \R^n$, the map
\begin{equation}
    \tau\in \R \mapsto \Ad(q(\tau a+b))\in \frak{g}
\end{equation}
is a polynomial of degree at most $d$ with respect to some basis of $\frak g$.

Let $V_G = \sum_{i=1}^{\dim \frak g} \bigwedge^i \frak{g}$. There is a natural action of $G$ on $V_G$ induced from the adjoint
representation (in other words, we are considering a representation $\pi:G \to \text{GL}(V_G)$ but sometimes omit the symbol $\pi$). Fix a norm on $V_G$. For a Lie subgroup $H$ of $G$ with Lie algebra $\frak{h}$, take a unit vector $p_H \in \bigwedge^{\dim \frak h} \frak{h}$.

\begin{theorem}[Special case of the theorems 2.1 and 2.2 in \cite{Shah1996LimitDO}, combined]\label{Shah dichotomy theorem}
With notations above, there exist closed subgroups $U_i (i=1,2,...,l)$ such that each $U_i$ is the unipotent radical of a parabolic subgroup, $U_i\Ga$ is compact in $X=G/\Ga$ and for any $d,n\in \N$, $\e,\de>0$, there exists a compact set $C \subset G/\Ga$ such that for any $q \in \mathcal{P}_{d,n}(G)$ and a bounded open convex set $D\subset \R^n$, one of the following holds:
\begin{enumerate}
    \item there exist $\ga'\in \Ga$ and $i=1,...,l$ such that $\sup_{v\in D}\|q(v)\ga'.p_{U_i}\| \le \de$; 
    \item $m(v\in D:q(v)\Ga \notin C)< \e m(D)$, where $m$ is the Lebesgue measure on $\R^n$.
\end{enumerate}
\end{theorem}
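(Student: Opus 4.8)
The plan is to deduce Theorem~\ref{Shah dichotomy theorem} directly from the general results of Shah, by specializing to our concrete group and lattice and then merging his two dichotomy statements. First I would check that the standing hypotheses hold: $G=\SL(3,\R)$ is connected, simple (in particular semisimple with no compact factors) and has trivial center, and $\Ga\le G$ is a lattice by assumption. Hence the families $\mathcal P_{d,n}(G)$, the representation $\pi\colon G\to\GL(V_G)$ on $V_G=\bigoplus_{i=1}^{\dim\frak g}\bigwedge^{i}\frak g$ induced by $\Ad$, and the rays $p_H$ are all defined exactly as in \cite{Shah1996LimitDO}, so that Theorems~2.1 and~2.2 there apply without modification.

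Next I would assemble the two inputs. The quantitative non-divergence (\cite{Shah1996LimitDO}, Theorem~2.1) rests on the observation that for $q\in\mathcal P_{d,n}(G)$ and any closed connected subgroup $H$ with $H\cap\Ga$ a lattice in $H$, the coordinate functions of $v\mapsto\pi(q(v)\ga').p_H$ are polynomials in $v$ of degree bounded in terms of $d$ and $\dim\frak g$, hence $(C,\alpha)$-good on $\R^n$ with $C,\alpha$ depending only on $d$, $n$, $\dim\frak g$; feeding this into the Dani--Margulis covering argument in the form adapted to polynomial trajectories (cf.\ \cite{Dani1993LimitDO}) produces a compact set $C\subset G/\Ga$ depending only on $d,n,\e,\de$, and \emph{not} on $q$ or $D$, such that for every $q\in\mathcal P_{d,n}(G)$ and every bounded open convex $D\subset\R^n$ either alternative~(2) holds, or $\sup_{v\in D}\|q(v)\ga'.p_H\|\le\de$ for some $\ga'\in\Ga$ and some $H$ in a fixed countable collection. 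Theorem~2.2 of \cite{Shah1996LimitDO} is then used to cut this collection down: the only subgroups that can genuinely obstruct non-divergence along a whole convex set — equivalently, those for which $\Ga.p_H$ is discrete in $V_G$, so that $\|q(v)\ga'.p_H\|\le\de$ pins the trajectory near a single point — can be taken, up to finitely many $G$-conjugacy classes, to be the unipotent radicals $U_i$ of the parabolic subgroups, which moreover satisfy $U_i\Ga$ compact. In $\SL(3,\R)$ these are, up to conjugacy, the two abelian $2$-dimensional radicals of the maximal parabolics and the $3$-dimensional Heisenberg radical of the Borel; since $\Ga$ is arithmetic (every lattice in $\SL(3,\R)$ is, by Margulis), $U_i\cap\Ga$ is a cocompact lattice in $U_i$ by Mal'cev's theorem, giving the compactness of $U_i\Ga$. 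Choosing one representative from each conjugacy class and absorbing the remaining conjugates into the quantifier over $\ga'$ yields the finite list $U_1,\dots,U_l$ and finishes the deduction.

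The genuinely hard ingredient — which I would quote from \cite{Shah1996LimitDO} rather than reprove — is the uniformity of $C$ over the entire family $\mathcal P_{d,n}(G)$ and over all convex domains $D$: this demands both $(C,\alpha)$-good estimates with constants controlled purely by the combinatorial data $(d,n,\dim\frak g)$, and a version of the Dani--Margulis induction on the finite poset of relevant subgroups in which one tracks that the base of the induction lands in a prescribed neighbourhood of the discrete set $\Ga.p_{U_i}$. A secondary point requiring care is the reduction from the full collection $\mathcal H$ of subgroups appearing in the linearization to the finite sublist of parabolic radicals, which uses that uniform $\de$-smallness of $\pi(q(v)\ga').p_H$ along all of $D$ forces $H$ to be normalised by a unipotent subgroup transverse to it, hence to lie in a conjugate of such a radical. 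Since $G$, $\Ga$, $d$ and $n$ are fixed throughout this paper, none of this requires new work: Theorem~\ref{Shah dichotomy theorem} is the off-the-shelf specialization of \cite{Shah1996LimitDO} that will be applied in the proof of Theorem~\ref{The $G$-invariance of limiting measure} to establish non-escape of mass for the limit measure on $\overline{G/\Ga}$.
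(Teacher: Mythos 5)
The paper offers no proof of this statement at all: it is imported verbatim as a special case of Theorems 2.1 and 2.2 of \cite{Shah1996LimitDO}, and your proposal does essentially the same thing — quote Shah's two theorems and observe that the standing hypotheses ($G=\SL(3,\R)$ connected semisimple without compact factors, $\Ga$ a lattice) are met — so in substance the approaches coincide and your sketch of how the two results combine is a reasonable gloss on the citation.

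One aside in your write-up is wrong and worth correcting, even though nothing downstream depends on it because the relevant property is part of Shah's \emph{conclusion} rather than something to be re-derived. You claim that for each unipotent radical $U_i$ of a parabolic, arithmeticity of $\Ga$ plus Mal'cev gives that $U_i\cap\Ga$ is a cocompact lattice in $U_i$, hence $U_i\Ga$ is compact. This fails whenever $\Ga$ is cocompact in $\SL(3,\R)$: such a lattice contains no nontrivial unipotent elements, so $U_i\cap\Ga$ is trivial and $U_i\Ga$ is never compact; in that case the correct reading of Shah's theorem is that the list $U_1,\dots,U_l$ is empty and alternative (1) never occurs. Even for non-uniform $\Ga$, only suitably chosen $\Ga$-conjugates of the standard parabolic radicals intersect $\Ga$ cocompactly, and this is exactly the data the cited theorem supplies; it should not be asserted for arbitrary parabolic radicals.
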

\vspace{5mm}


For $g_1\in G=\SL(3,\R)$ and $\ga' \in \Ga$, we shall consider the following family of maps: Let $n=2, t>0$, $v\in \R^2$ and define $q_t(v):=\ga a_t^{-1} u_v^{-1} g_1$ (Here $\ga \in \SL_{2,1}(\Z)$ shouldn't be confused with $\ga'\in \Ga$ above). 

Our strategy is to investigate how $q_t(v)$ fails the condition 1 of the Theorem \ref{Shah dichotomy theorem} by studying the expanding phenomenon of the map $q_t(v)$. As a result, the second statement holds and an approximation argument will give $\eta(\infty)=0$. 

\vspace{5mm}


Denote by $\mathcal{H}_{\Ga}$ the family of all proper closed connected subgroups $H$ of $G$ such that $\Ga \cap H$ is a lattice in H, and $\Ad(H \cap \Ga)$ is Zariski-dense in $\Ad(H)$. We have the following important theorem:

\begin{theorem}[Theorem 1.1 \cite{Ratner91a}, Theorems 2.1 and 3.4 \cite{Dani1993LimitDO}. see also Section 3 and Proposition 4.1 in \cite{Shah1996LimitDO}]\label{ratner's theorem on discreteness}
The set $\mathcal{H}_{\Ga}$ is countable. For any $H\in \mathcal{H}_{\Ga}, \Ga.p_H$ is discrete.    
\end{theorem}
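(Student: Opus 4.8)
The plan is to establish the two assertions in turn; the argument is the standard one underlying the linearisation technique, and I would follow \cite{Dani1993LimitDO} (compare also \cite{Ratner91a, Shah1996LimitDO}), importing its main lemmas rather than reproving them.

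\textit{Countability of $\mathcal{H}_\Ga$.} The key point is that the assignment $H \mapsto \Ga\cap H$ is injective on $\mathcal{H}_\Ga$. If $H\in\mathcal{H}_\Ga$, then $\Ad(\Ga\cap H)$ is Zariski dense in $\Ad(H)$, and since the centre of $\SL(3,\R)$ is trivial $\Ad$ is faithful; using these facts one recovers $\Ad(H)$, hence $H$, from the discrete group $\Ga\cap H$ (this is the content of the corresponding step in \cite{Dani1993LimitDO}). Moreover each $\Ga\cap H$ is a lattice in the connected Lie group $H$, hence finitely generated. Since $\Ga$ is a countable group it has only countably many finite subsets, hence only countably many finitely generated subgroups; therefore $\mathcal{H}_\Ga$ is countable. (Alternatively, countability is a direct consequence of Ratner's orbit-closure theorem \cite{Ratner91a}.)

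\textit{Discreteness of $\Ga\cdot p_H$.} Fix $H\in\mathcal{H}_\Ga$ and set $L:=\mathrm{Stab}_G(p_H)=\{g\in G:\Ad(g)\mathfrak h=\mathfrak h,\ \det(\Ad(g)|_{\mathfrak h})=1\}$, a closed subgroup of $N_G(H)$. Since $H$ possesses a lattice it is unimodular, so $\det(\Ad(h)|_{\mathfrak h})=1$ for every $h\in H$; hence $H\subseteq L$, and $H$ is normal in $L$ because $H$ is connected. I would then proceed in three steps. First, the $G$-action on $V_G$ is algebraic, so the orbit $G\cdot p_H$ is locally closed in $V_G$ and the orbit map gives a homeomorphism $G/L\xrightarrow{\,\sim\,}G\cdot p_H$; consequently $\Ga\cdot p_H$ is discrete in $V_G$ as soon as $\Ga L/L$ is discrete in $G/L$. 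Second, a standard closedness criterion (see \cite{Dani1993LimitDO}) gives that $\Ga L$ is closed in $G$ — equivalently $\Ga L/L$ is discrete in $G/L$ — provided $\Ga\cap L$ is a lattice in $L$. Third, one shows $\Ga\cap L$ is a lattice in $L$: we know $\Ga\cap H$ is a lattice in the normal subgroup $H\trianglelefteq L$, and the normaliser lemma of \cite{Dani1993LimitDO}, which uses the Zariski density of $\Ad(\Ga\cap H)$ in $\Ad(H)$ together with unimodularity of $H$, upgrades this to the assertion that $\Ga\cap L$ is a lattice in $L$.

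The main obstacle is the third step, namely passing from ``$\Ga\cap H$ is a lattice in $H$'' to ``$\Ga\cap L$ is a lattice in $L$'', because $L$ can be strictly larger than $H$: its Lie algebra is the normaliser $\mathfrak n_{\mathfrak g}(\mathfrak h)$ cut out by the trace condition, which may properly contain $\mathfrak h$. The Zariski-density hypothesis in the definition of $\mathcal{H}_\Ga$ is precisely what ensures that the extra directions of $L$ beyond $H$ are still ``filled'' by $\Ga$; making this rigorous is exactly the normaliser lemma, and I would quote it from \cite{Dani1993LimitDO} rather than reprove it. As with countability, one may also derive discreteness directly from Ratner's measure-classification and orbit-closure theorems \cite{Ratner91a}.
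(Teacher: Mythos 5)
The paper does not actually prove this statement---it is imported as a black box from \cite{Ratner91a,Dani1993LimitDO,Shah1996LimitDO}---so the question is whether your sketch is a faithful reconstruction of the cited argument. Your countability argument is the standard one and is correct: $H$ is recovered from the Zariski closure of $\Ad(\Ga\cap H)$, each $\Ga\cap H$ is a finitely generated subgroup of the countable group $\Ga$, and there are only countably many such subgroups.

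The discreteness argument, however, has a genuine gap at your third step: it is \emph{false} in general that $\Ga\cap L$ is a lattice in $L=\mathrm{Stab}_G(p_H)=N^1_G(H):=\{g\in N_G(H):\det(\Ad(g)|_{\frak h})=1\}$, and there is no ``normaliser lemma'' of that strength in \cite{Dani1993LimitDO}. Take $G=\SL(3,\R)$, $\Ga=\SL(3,\Z)$ and $H=\{I+tE_{13}:t\in\R\}$ the root subgroup through the matrix unit $E_{13}$. Then $\Ga\cap H\cong\Z$ is a lattice in $H$ and $\Ad(\Ga\cap H)$ is Zariski dense in $\Ad(H)$, so $H\in\mathcal H_\Ga$. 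A direct computation gives $N^1_G(H)=\{g \text{ upper triangular}: g_{11}=g_{33}\}$, a four--dimensional group whose identity component is $\{\diag(a,a^{-2},a)\}\ltimes N$ with $N$ the full upper unipotent group; but $\Ga\cap N^1_G(H)$ is virtually $N(\Z)$, whose image in $N^1_G(H)/N\cong\R^{*}$ is finite, so it is \emph{not} a lattice in $N^1_G(H)$. (The same failure occurs for $H=N$ itself, the unipotent radical of the Borel, which is one of the groups $U_i$ the paper actually feeds into Theorem \ref{Shah dichotomy theorem}.) Note that the conclusion still holds in this example---$\Ga\cdot p_H$ consists of integer rank-one nilpotent matrices, hence is discrete---which shows that $\Ga L$ can be closed without $\Ga\cap L$ being a lattice in $L$: your step 2 is a one-way implication whose hypothesis fails here. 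What the covolume/Zariski-density argument genuinely yields is only that $|\det(\Ad(\delta)|_{\frak h})|=1$ for every $\delta\in\Ga\cap N_G(H)$ (conjugation by $\delta$ preserves the lattice $\Ga\cap H$ and hence its covolume), so that the orbit meets each line $\R\,p_{\ga H\ga^{-1}}$ in at most two points; the proof of Theorem 3.4 in \cite{Dani1993LimitDO} must then separately exclude accumulation of these countably many lines, and that step cannot be replaced by the lattice claim you propose.
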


Apply Theorem \ref{Shah dichotomy theorem} with $G=\SL(3,\R)$ to $q_t(v)$. The theorem gives a finite family of unipotent radicals of parabolic subgroups, denoted $U_1,...,U_l$. By Theorem \ref{ratner's theorem on discreteness}, we have that the set $\Ga.p_{U_i}$ is discrete in $V_G$.

Write $V=V_G=V_0 \bigoplus V_1$, where $V_0$ is the space of vectors fixed by $G$ and $V_1$ is its $G$-invariant complement (exists because every finite-dimensional representation of a semisimple Lie group is completely reducible). Denote by $\Pi$ the projection of $V_G$ onto $V_1$ with kernel $V_0$, and note that by Lemma 17 in \cite{Gorodnik2003LatticeAO} that $\Pi(\Ga.p_{U_i})$ is discrete. Since $p_{U_i}$ is not fixed by $G$ (this is because the action is through conjugation and $U_i$'s are not normal subgroups in the simple group $G$), $\Pi(\Ga.p_{U_i})$ does not contain $0$. Furthermore, $\cup_{i=1}^l \Ga.p_{U_i}$ is discrete. So it follows that 
\begin{equation}\label{inf of the projection is at least r}
    \inf_{x\in \cup_{i=1}^l \Ga.p_{U_i}} \|\Pi(x)\|:=r>0.
\end{equation}
The union $\cup_{i=1}^l
\Ga.p_{U_i}$ is what we are looking for in the first outcome of Theorem \ref{Shah dichotomy theorem}. The following lemma will play a crucial role in proving $\eta(\infty)=0$:

\begin{lemma}\label{our lemma expansion inequality for applying shah dichotomy} Let $U_1,...,U_l$ are as above. Fix $(a_1,a_2)\in \R^2$ and $\beta=\frac{1}{2}t^{0.4}$. Then for any $\de>0$, there exist $t_0>0$ such that for any $0<t<t_0$, 
\begin{equation}\label{expansion inequality for applying shah dichotomy}
    \sup_{v\in D(\beta)+(a_1,a_2)} \|a_t^{-1}u_v^{-1}g_1.x\| >\de,
\end{equation}
for any $x\in \cup_{i=1}^r \Ga.p_{U_i}$,
where $D(\beta)$ is the disk of radius $\beta$ in $\R^2$ centered at the origin.
\end{lemma}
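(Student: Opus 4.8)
The plan is to rule out alternative~(1) of Theorem~\ref{Shah dichotomy theorem} for the maps $q_t(v)=\ga a_t^{-1}u_v^{-1}g_1$ by a quantitative \emph{expansion} estimate for $a_t^{-1}u_v^{-1}g_1.x$ inside $V_G$; the left factor $\ga$ plays no role here and I drop it. First I would fix an $\SO(3,\R)$-invariant inner product on $V_G$ for which $\pi(a_t^{-1})$ is self-adjoint and positive, so that the weight decomposition $V_G=\bigoplus_w W_w$ (with $\pi(a_t^{-1})|_{W_w}=t^w$, the weights $w$ in a finite subset of $\tfrac12\Z\cap[-3,3]$) is orthogonal; note that the generators $E_{31},E_{32}$ of $\mathrm{Lie}(U)$ have weight $-\tfrac32$, so $\rho(E_{31}):=d\pi(E_{31})$ and $\rho(E_{32})$ each lower weights by $\tfrac32$. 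Next I would reduce: since the norm is $\SO(3,\R)$-invariant I may take $g_1$ block lower triangular, and since the projection $\Pi\colon V_G\to V_1$ commutes with $G$, writing $v=(a_1,a_2)+v'$ and $\tilde z:=\Pi\big(\pi(u_{-(a_1,a_2)}g_1)x\big)\in V_1$ gives $\|a_t^{-1}u_v^{-1}g_1.x\|\ge\|\pi(a_t^{-1})\pi(u_{-v'})\tilde z\|$, while $\|\tilde z\|\ge\rho_0>0$ \emph{uniformly} over $x\in\bigcup_i\Ga.p_{U_i}$ by \eqref{inf of the projection is at least r}. Thus it is enough to prove $\Phi(t):=\inf_{\hat z\in V_1,\ \|\hat z\|=1}\ \sup_{\|v'\|\le\frac12 t^{0.4}}\ \|\pi(a_t^{-1})\pi(u_{-v'})\hat z\|\to\infty$ as $t\to0^+$ (then $\|a_t^{-1}u_v^{-1}g_1.x\|\ge\rho_0\Phi(t)$ exceeds the given $\de$ once $t<t_0$).

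For a fixed unit vector $\hat z\in V_1$: since $\rho(E_{31}),\rho(E_{32})$ commute, $v'\mapsto\pi(u_{-v'})\hat z$ is a polynomial, and its component $Q_w$ in $W_w$ can contain the monomial ${x'}^k{y'}^l$ only if $\rho(E_{31})^k\rho(E_{32})^l\hat z$ has a nonzero weight-$w$ part, which forces $\tfrac32(k+l)\le3-w$, hence $\deg Q_w\le\tfrac23(3-w)$. Letting $N$ be the least weight with $Q_N\not\equiv0$, orthogonality of the $W_w$ together with the elementary lower bound for a nonzero polynomial on a ball of radius $\tfrac12 t^{0.4}$ give $\sup_{v'}\|\pi(a_t^{-1})\pi(u_{-v'})\hat z\|\gtrsim t^{\,N+0.4\deg Q_N}$ for $t<1$; using $\deg Q_N\le\tfrac23(3-N)$ the exponent $\tfrac{11N+12}{15}$ is negative as soon as $N\le-\tfrac32$. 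So everything comes down to the algebraic statement: \emph{for every nonzero $z\in V_1$, some $\rho(E_{31})^k\rho(E_{32})^l z$ has a nonzero component of weight $\le-\tfrac32$.}

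Proving this algebraic statement is where the real work lies. The $\langle\rho(E_{31}),\rho(E_{32})\rangle$-submodule generated by $z$ is finite dimensional and $\mathfrak u$-stable, so by Engel it contains a nonzero $\mathfrak u$-fixed vector in $V_1$, which (weights being shifted by a fixed amount) may be taken a single weight vector $z_0$; this $z_0$ lies in an irreducible constituent $V_\lambda$ of $V_G=\sum_{k\ge1}\bigwedge^k\mathfrak g$, and its weight equals the $\{a_t\}$-central character of the Levi module $(V_\lambda)^{\mathfrak u}$, which a standard computation (Kostant's theorem, or by hand) gives as $-\tfrac{2a+b}{2}$ for $\lambda=a\varpi_1+b\varpi_2$. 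For this to exceed $-\tfrac32$ one needs $2a+b\le2$, i.e.\ $\lambda\in\{0,\varpi_1,\varpi_2,2\varpi_2\}$; but $\varpi_1,\varpi_2,2\varpi_2$ lie outside the root lattice of $A_2$ whereas every weight of $\bigwedge^\bullet\mathfrak g$ lies inside it, so $\lambda=0$, forcing $z_0$ to be $G$-fixed — impossible since $z_0\in V_1$. Hence $z_0$, and with it the minimum reachable weight $N$, is $\le-\tfrac32$. (Only $\bigwedge^2\mathfrak g$ and $\bigwedge^3\mathfrak g$ actually occur, as $\dim U_i\in\{2,3\}$, and there the constituents and their $\mathfrak u$-invariants can simply be listed.)

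Finally I would upgrade pointwise divergence to $\Phi(t)\to\infty$ by a compactness argument on the unit sphere of $V_1$: if $t_n\to0$ and unit $\hat z_n\to\hat z_*$ kept $\|\pi(a_{t_n}^{-1})\pi(u_{-v'})\hat z_n\|$ bounded, then reading the coefficient estimate backwards forces $\rho(E_{31})^k\rho(E_{32})^l(\hat z_*)_m=0$ whenever $1.1(k+l)>m$; combined with the vector of weight $\le-\tfrac32$ produced above (which has $m\le3$) this yields $m\ge\tfrac{33}{8}$, a contradiction. The main obstacle is the algebraic input of the third paragraph: it is the only place where the arithmetic of $A_2$ (fundamental group $\Z/3$, so $\bigwedge^\bullet$ of the adjoint meets only the root lattice) enters and where the precise exponent $0.4$ in $\beta=\tfrac12 t^{0.4}$ is calibrated; everything else is routine bookkeeping with the polynomial and linearization estimates already standard in this circle of ideas, but these two features are exactly what must be re-examined in the higher-rank generalization the authors mention.
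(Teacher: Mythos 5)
Your argument is correct in outline and the numerology checks out (the threshold $N\le -\tfrac32$ beating $N<-\tfrac{12}{11}$, and the $m\ge\tfrac{33}{8}>3$ contradiction in the uniformity step), but it is a genuinely different route from the paper's. The paper never works with the full weight decomposition of $V_G$ under $A$: instead it factors $a_t^{-1}u_v^{-1}$ as a product of elements lying in two embedded copies $\SL^{(1)}(2,\R),\SL^{(2)}(2,\R)$ of $\SL(2,\R)$ (displays \eqref{one order}--\eqref{the other order}), observes that these two copies generate $G$ (Appendix A), so the fixed subspaces satisfy $V_0^{(1)}\cap V_0^{(2)}=\{0\}$ and by a compactness/scaling claim at least one non-fixed component of $w_1=g_1.x$ has norm $\gtrsim r$; it then runs the expansion through $\SL(2,\R)$ weight strings, quoting Lemma \ref{very important inequaity lemma of nimish-gorodnik} as a black box for the one-parameter unipotent inside the chosen copy, to get the rate $t^{-0.1n}$. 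You instead treat the two-parameter group $U$ at once, re-derive the polynomial-on-a-ball estimate by hand, and replace the ``two $\SL_2$'s generate $\SL_3$'' input by the sharper algebraic statement that every nonzero vector of $V_1$ reaches, under $U(\mathfrak u)$, a weight $\le-\tfrac32$ --- proved via Engel plus the Kostant-type computation that $(V_\lambda)^{\mathfrak u}$ is a single $\mathfrak z(\mathfrak l)$-eigenspace of eigenvalue $-\tfrac{2a+b}{2}$, combined with the observation that constituents of $\bigwedge^\bullet\mathfrak{sl}_3$ have highest weights in the root lattice. What your version buys is a transparent calibration of the exponent $0.4$ and a statement that should survive the higher-rank generalization the authors advertise; what the paper's version buys is elementarity (only $\SL(2,\R)$ representation theory and a quoted lemma, no Kostant). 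Two spots in your write-up are compressed and should be expanded if written out: the irreducibility of $(V_\lambda)^{\mathfrak u}$ as a Levi module (needed so that \emph{all} $\mathfrak u$-invariants, not just the lowest-weight vector, carry weight $-\tfrac{2a+b}{2}$), and the passage from the Engel fixed vector in $U(\mathfrak u)z$ to a fixed \emph{weight} vector (take weight components; each is separately annihilated by $\rho(E_{31}),\rho(E_{32})$ since these are weight-homogeneous). Neither is a gap, just detail to supply.
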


\subsection{Proof of Lemma \ref{our lemma expansion inequality for applying shah dichotomy}}

The idea of the proof is to decompose the elements in $AU$ into elements in two subgroups of $\SL(3,\R)$ isomorphic to $\SL(2,\R)$ and then use the representation theory of $\SL(2,\R)$ and Lemma \ref{very important inequaity lemma of nimish-gorodnik} below to show find the expansion property for each. Note that a similar approach was used in \cite{Kleinbock2006DIRICHLETSTO}.

Observe that
\begin{align}
    a_t^{-1}u_v^{-1}
=&
\begin{bmatrix}
\sqrt t & 0 & 0 \\
0 & \sqrt t & 0 \\
0 & 0 & \frac{1}{t} 
\end{bmatrix}
\begin{bmatrix}
1 & 0 & 0 \\
0 & 1 & 0 \\
-z & -y & 1 
\end{bmatrix} \nonumber\\ 
=&
\begin{bmatrix}
\sqrt t & 0 & 0 \\
0 & 1 & 0 \\
0 & 0 & \frac{1}{\sqrt t} 
\end{bmatrix}
\begin{bmatrix}
1 & 0 & 0 \\
0 & 1 & 0 \\
-\frac{y}{\sqrt{t}} & 0 & 1 
\end{bmatrix}
\begin{bmatrix}
1 & 0 & 0 \\
0 & \sqrt t & 0 \\
0 & 0 & \frac{1}{\sqrt{t}} 
\end{bmatrix}
\begin{bmatrix}
1 & 0 & 0 \\
0 & 1 & 0 \\
0 & -y & 1 
\end{bmatrix} \label{one order}\\
(alternatively)=&
\begin{bmatrix}
1 & 0 & 0 \\
0 & \sqrt t & 0 \\
0 & 0 & \frac{1}{\sqrt{t}} 
\end{bmatrix}
\begin{bmatrix}
1 & 0 & 0 \\
0 & 1 & 0 \\
0 & -\frac{y}{\sqrt{t}} & 1 
\end{bmatrix}
\begin{bmatrix}
\sqrt t & 0 & 0 \\
0 & 1 & 0 \\
0 & 0 & \frac{1}{\sqrt t} 
\end{bmatrix}
\begin{bmatrix}
1 & 0 & 0 \\
0 & 1 & 0 \\
-z & 0 & 1 
\end{bmatrix}\label{the other order}
\end{align}

It follows that the action of $a_t^{-1}u_v^{-1}$ on a vector through can be identified with two consecutive actions of elements in 

\begin{equation}
\SL^{(1)}(2,\R):=
\left \{\begin{bmatrix}
1 & 0 & 0 \\
0 & a & b \\
0 & c & d 
\end{bmatrix}:ad-bc=1 \right \}
\end{equation}
and
\begin{equation}
    \SL^{(2)}(2,\R):=
\left \{\begin{bmatrix}
a & 0 & b \\
0 & 1 & 0 \\
b & 0 & d 
\end{bmatrix}:ad-bc=1 \right \},
\end{equation}
each isomorphic to $\SL(2,\R)$. For example $\begin{bmatrix}
1 & 0 & 0 \\
0 & \sqrt t & 0 \\
0 & 0 & \frac{1}{\sqrt{t}} 
\end{bmatrix}$ can be identified with $\begin{bmatrix}
\sqrt t & 0 \\
0 & \frac{1}{\sqrt{t}} 
\end{bmatrix}$ and $\begin{bmatrix}
1 & 0 & 0 \\
0 & 1 & 0 \\
-z & 0 & 1 
\end{bmatrix}$
can be identified with
$\begin{bmatrix}
1  & 0 \\
-z & 1 
\end{bmatrix}$.

\vspace{5mm}

Now we return to the study of the study of $a_t^{-1}u_v^{-1}w$, where $w=g_1.x$ with $x\in \cup_{i=1}^l \Ga.p_{U_i}\subset{V_G}$ as above. We consider the two following decompositions of $V_1$ (recall that this is the invariant completement of $V_0$ of fixed vectors in $V$ by $G$),
\begin{align}
    V_1 &= V_{0}^{(1)} \oplus V_{1}^{(1)} \\
      &= V_{0}^{(2)} \oplus V_{1}^{(2)} \label{decompositive of vs w.r.t. SL2}
\end{align}
where $V_0^{(i)}$ is the subspace of fixed vectors of the action of $\SL^{(i)}(2,\R)$ for $i=1,2$ and $V_1^{(i)}$ is its invariant complement. Note that since $G$ is generated by $\SL^{(1)}(2,\R)$ and $\SL^{(2)}(2,\R)$, see Lemma \ref{sl2 generating sl3}, we must have\footnote{Alternatively, note that the group $UA$ is epimorphic, see e.g. \cite{Shah2000OnAO}.} $V_{0}^{(1)} \cap V_{0}^{(2)}=\{0\}$. Later on, we will also decompose $V_{1}^{(i)}$ into irreducible components with respect to the restriction of the representation $\pi$ to $\SL^{(i)}(2,\R)$, respectively.

\vspace{5mm}
For any $w_1\in V_1, \|w_1\|\ge r$ for some $r>0$, we write 
\begin{equation}    w_1=w_{0}^{(1)}+w_{1}^{(1)}=w_{0}^{(2)}+w_{1}^{(2)}
\end{equation}
where $w_{i}^{(j)} \in V_{i}^{(j)}$ from above. We expect the non-fixed components, $w_{1}^{(1)}$ and $w_{1}^{(2)}$ to be not simultaneously small. In fact we have:

\begin{claim}
$\inf_{w_1\in V_1, \|w_1\|\ge r} \left( \|w_{1}^{(1)}\|+\|w_{1}^{(2)}\| \right) = Lr$ for some $L>0$
\end{claim}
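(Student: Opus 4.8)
\subsection{Proof of the Claim}

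The plan is to exploit that the quantity in question is positively homogeneous of degree one and to reduce the estimate to a compactness statement on the unit sphere of $V_1$, where the already-established transversality $V_0^{(1)}\cap V_0^{(2)}=\{0\}$ does the work.

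First I would record that, since $\SL^{(i)}(2,\R)$ is semisimple, its finite-dimensional representation on $V_1$ is completely reducible, so $V_1=V_0^{(i)}\oplus V_1^{(i)}$ is a genuine direct-sum decomposition for $i=1,2$; let $\Pi^{(i)}\colon V_1\to V_1^{(i)}$ be the associated linear projection with kernel $V_0^{(i)}$, so $w_1^{(i)}=\Pi^{(i)}(w_1)$. Then $f\colon V_1\to\R_{\ge 0}$, $f(w_1):=\|\Pi^{(1)}(w_1)\|+\|\Pi^{(2)}(w_1)\|$, is continuous and satisfies $f(sw_1)=s\,f(w_1)$ for $s\ge 0$. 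Hence if $\|w_1\|\ge r$, writing $w_1=\|w_1\|\,u$ with $\|u\|=1$ gives $f(w_1)=\|w_1\|\,f(u)\ge r\,f(u)$, so
\[
\inf_{\|w_1\|\ge r} f(w_1)=r\cdot\inf_{\|u\|=1}f(u),
\]
the equality being realized by $w_1=r u_\ast$ for any $u_\ast$ attaining the infimum over the sphere. Thus it suffices to show $L:=\inf_{u\in V_1,\,\|u\|=1}f(u)>0$, and the claim then holds with this $L$.

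Next I would argue $L>0$ by compactness: the unit sphere of $V_1$ is compact and $f$ is continuous, so $L=f(u_\ast)$ for some $u_\ast$ with $\|u_\ast\|=1$. If $L=0$ then $\Pi^{(1)}(u_\ast)=\Pi^{(2)}(u_\ast)=0$, i.e.\ $u_\ast\in\ker\Pi^{(1)}\cap\ker\Pi^{(2)}=V_0^{(1)}\cap V_0^{(2)}$; but this intersection is $\{0\}$ (as noted above, because $G$ is generated by $\SL^{(1)}(2,\R)$ and $\SL^{(2)}(2,\R)$ and $V_1$ contains no nonzero $G$-fixed vector), forcing $u_\ast=0$, a contradiction. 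Hence $L>0$.

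I do not expect a genuine obstacle here: the only points needing a word of care are that the two decompositions of $V_1$ are honest direct sums — complete reducibility for $\SL(2,\R)$ — and that the infimum over $\{\|w_1\|\ge r\}$ is exactly $rL$ rather than merely bounded below by it, which is witnessed by the vector $r u_\ast$. The substantive input $V_0^{(1)}\cap V_0^{(2)}=\{0\}$ has already been secured in the text.
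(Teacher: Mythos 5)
Your proof is correct and follows essentially the same route as the paper's: positivity of $\inf_{\|u\|=1}\left(\|u^{(1)}\|+\|u^{(2)}\|\right)$ via compactness of the unit sphere together with $V_0^{(1)}\cap V_0^{(2)}=\{0\}$, then degree-one homogeneity to scale to radius $r$. If anything, you are slightly more careful than the paper in noting that the infimum over $\{\|w_1\|\ge r\}$ (not just over the sphere $\{\|w_1\|=r\}$) equals $rL$ exactly, witnessed by $ru_\ast$.
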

\begin{proof}[Proof of the claim]\renewcommand{\qedsymbol}{\ensuremath{\#}}
Indeed, using the fact that the sphere of radius $r$ is compact and that $V_{0}^{(1)} \cap V_{0}^{(2)}=\{0\}$, we have $$\inf_{w_1\in V_1, \|w_1\|= 1} \left( \|w_{1}^{(1)}\|+\|w_{1}^{(2)}\| \right) = L > 0.$$ 
By scaling, it is easy to see for $r \ge 1$, 
\begin{align*}
    \inf_{w_1\in V_1, \|w_1\|= r} \left( \|w_{1}^{(1)}\|+\|w_{1}^{(2)}\| \right) 
    = & r \inf_{w_1\in V_1, \|w_1\|= 1} \left( \|w_{1}^{(1)}\|+\|w_{1}^{(2)}\| \right) \tag{by linearity and scaling} \\
    =& Lr>0.
\end{align*}
\end{proof}

It follows from this claim that for any vector that cannot be fixed by $G$, then its non-$\SL^{(i)}(2,\R)$-fixed component must be bounded away from zero by at least $\frac{Lr}{2}>0$ for either $i=1$ or $i=2$. 

\vspace{5mm}
Therefore, the study of the expanding phenomenon on $a_t^{-1}u_v^{-1}w$ can be transformed into the study of representation theory of $\SL(2,\R)$. Recall that if $\pi$ is an $(n+1)$-dimensional irreducible representation of $\SL(2,\R)$ and $\pi'$ is the induced Lie algebra representation, then there exist a basis $v_0,v_1,...,v_n$ such that
\begin{align}
    &\pi'(H)(v_i)=(n-2i)v_i, i=0,1,...,n;\\
    &\pi'(X)(v_i)=i(n-i+1)v_{i-1}, i=0,1,...,n;\\
    &\pi'(Y)(v_i)=v_{i+1}, i=0,1,...,n~(v_{n+1}=0).
\end{align}
where
$H = \begin{bmatrix}
    1 & ~~0\\
    0 & -1
  \end{bmatrix},
  X = \begin{bmatrix}
    0 & 1\\
    0 & 0
  \end{bmatrix}$, and
  $Y = \begin{bmatrix}
    0 & 0\\
    1 & 0
  \end{bmatrix}$ form a generating set of $\frak{sl}(2,\R)$.

\vspace{5mm}
 Under the matrix Lie group-Lie algebra correspondence, 
\[\pi \left(\begin{bmatrix}
    \sqrt{t} & 0\\
    0 & \frac{1}{\sqrt{t}}
\end{bmatrix} \right)=\pi(\exp(\log(\sqrt{t})H))\] has the matrix representation 
\begin{equation}\label{matrix rep of a_t under the basis}
    \begin{bmatrix}
    \sqrt{t}^n & & &\\
    & \sqrt{t}^{n-2} & & \\
    & & \ddots &\\
    & & & \sqrt{t}^{-n}
  \end{bmatrix}
\end{equation}
and 
\[\pi \left(\begin{bmatrix}
    1 & 0\\
    -y & 1
\end{bmatrix} \right)=\pi(\exp(-yY))\]
has the matrix representation
\begin{equation}\label{matrix rep of u_y under the basis}
    \begin{bmatrix}
    1 & & &\\
    p_{21}(y) & 1 & & \\
    \vdots   & \ddots & \ddots &\\
    p_{nn}(y) & \cdots &  p_{n1}(y)  &  1
  \end{bmatrix}
\end{equation}
where $p_{kl}(y)$ is a monomial in $y$ of degree $l$.
Both matrices are under the basis $v_0,v_1,...,v_n$. Observe that $\R v_n$ is the fixed subspace of $\pi \left(\begin{bmatrix}
    1 & 0\\
    -y & 1
\end{bmatrix}\right)$.

\vspace{5mm}
The following important lemma allows to prove the expansion of vectors in the representation space by the group $UA$, see \eqref{expansion inequality}.

\begin{lemma}[\cite{Gorodnik2003LatticeAO} Lemma 13, \cite{Shah1996LimitDO} Lemma 5.1 ]\label{very important inequaity lemma of nimish-gorodnik}
Let $V$ be a finite dimensional vector space with a norm $\|\cdot\|$, $\frak{n}$ be a nilpotent Lie algebra of $\End(V)$ with a basis $\{b_i:i=1,...,m\}$, and $N=\exp(\frak{n})\subset \GL(V)$ be the Lie group of $\frak{n}$. Define a map $\Theta:\R^m \to N$:
\begin{equation}
    \Theta(t_1,...,t_m)=\exp \left( \sum_{i=1}^m t_ib_i \right), (t_1,....,t_m)\in \R^m.
\end{equation}
This is a polynomial map since $N$ is nilpotent.


For $\beta>0$, put 
$D_{\exp}(\beta)=\Theta([0,\beta]\times \cdots \times [0,\beta])$. 
Let
\begin{equation}
    W=\{v\in V:\frak{n}.v=0\}=\{v\in V:N.v=v\}.
\end{equation}
Denote by $\pr_W$ the orthogonal projection on $W$ with respect to some scalar product on $V$. Then there exists a constant $c_0>0$ such that for any $\beta \in (0,1)$ and $v\in V$,
\begin{equation}
    \sup_{n\in D_{\exp}(\beta)}\|\pr_W(n.v)\| \ge c_0 \beta^d\|v\|,
\end{equation}
where $d$ is the degree of the polynomial map $\Theta$.
\end{lemma}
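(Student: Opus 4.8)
\textbf{Proof proposal for Lemma \ref{very important inequaity lemma of nimish-gorodnik}.}

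The plan is to reduce to a one-variable statement along a well-chosen line and then exploit the nondegeneracy of the orbit map as a polynomial. First I would fix $v\in V$ and split $v=v_W+v^\perp$ with $v_W=\pr_W(v)$ and $v^\perp$ in the orthogonal complement. If $\|v_W\|\ge \tfrac12\|v\|$ then the conclusion is already immediate at $n=\Theta(0,\dots,0)=\mathrm{id}$, so the real content is the case $\|v^\perp\|\ge\tfrac12\|v\|$, i.e. $v$ has a large component transverse to the fixed space $W$. In that case I would consider the $\R^m$-indexed family of vectors $n.v$ for $n\in D_{\exp}(\beta)$ and look at the scalar-valued polynomial map
\[
P(t_1,\dots,t_m):=\pr_W\bigl(\Theta(t_1,\dots,t_m).v\bigr)\in W.
\]
This is a polynomial of degree at most $d$ in $(t_1,\dots,t_m)$ (the degree of $\Theta$), with values in the fixed-point space $W$.

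The key step is to show that $P$ is not the constant $v_W$; equivalently, that some coefficient of a nonconstant monomial of $P$ has norm bounded below by a constant times $\|v\|$. For this I would use that $W=\{v:\frak n.v=0\}$, so the non-fixed part $v^\perp$ is not annihilated by all of $\frak n$: there is some $b_i$ with $b_i.v\neq 0$, and more is true — since $\frak n$ is nilpotent and acts on $V$, one can iterate brackets/products of the $b_i$ to push $v^\perp$ into $W$ in finitely many steps (the lower central series of $\frak n$, or simply the fact that $\exp(\frak n)$ acting on the $N$-invariant complement of $W$ has unipotent image, forces the orbit of any $v\notin W$ to have nonzero $W$-component for some choice of parameters — in fact, averaging $\pr_W(n.v)$ against a suitable measure recovers a nonzero multiple of $v$ by an $N$-invariance/Reynolds-operator argument in the completely reducible case). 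Thus $P$ is a nonconstant $W$-valued polynomial of degree $\le d$, and by homogeneity (scaling $v$) its $\ell^\infty$-norm of coefficients on nonconstant monomials is $\ge c_1\|v\|$ for a constant $c_1$ depending only on the $b_i$ and the norm.

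Having that, I would invoke a compactness/equivalence-of-norms lemma on the finite-dimensional space of $W$-valued polynomials in $m$ variables of degree $\le d$: the two norms ``maximum of $\|P(t)\|$ over $t\in[0,\beta]^m$'' and ``$\beta^{\deg}\times$(sup norm of coefficients)'' are comparable, uniformly in $\beta\in(0,1)$, once one tracks the scaling $t\mapsto \beta t$. Concretely, writing $P(\beta s)=\sum_\alpha c_\alpha \beta^{|\alpha|}s^\alpha$ for $s\in[0,1]^m$, the sup over $s\in[0,1]^m$ of $\|\sum_\alpha c_\alpha \beta^{|\alpha|}s^\alpha\|$ is at least $c_2\beta^{d}\max_{|\alpha|\ge 1}\|c_\alpha\|$ minus the contribution of the constant term; separating the constant term (which is exactly $v_W$) and using $\|v^\perp\|\ge\tfrac12\|v\|$ together with $P(0)=v_W$ yields $\sup_{n\in D_{\exp}(\beta)}\|\pr_W(n.v)-v_W\|\ge c_0\beta^d\|v\|$, hence $\sup_{n\in D_{\exp}(\beta)}\|\pr_W(n.v)\|\ge \tfrac12 c_0\beta^d\|v\|$ after possibly shrinking $c_0$. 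Rolling the two cases together gives the claimed bound with a uniform $c_0>0$.

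The main obstacle I anticipate is the quantitative lower bound in the key step: showing that the nonconstant part of the $W$-valued polynomial $P$ is genuinely large (of size $\gtrsim\|v\|$), uniformly over all $v$ with $\|v^\perp\|\ge\tfrac12\|v\|$, rather than merely nonzero. The clean way to get uniformity is again compactness — restrict to $\|v\|=1$ and $\|v^\perp\|\ge\tfrac12$, observe the map $v\mapsto(\text{coefficients of }P)$ is continuous, the "$P$ has nonzero nonconstant part'' set is open and, by the nilpotency/invariant-complement argument above, contains the whole compact slice, so the infimum of the largest nonconstant coefficient norm is attained and positive. Everything else is linear algebra and the polynomial-norm comparison, which is routine.
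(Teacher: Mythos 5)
First, a remark on scope: the paper does not prove this lemma at all --- it is imported verbatim from \cite{Gorodnik2003LatticeAO} (Lemma 13) and \cite{Shah1996LimitDO} (Lemma 5.1) --- so your argument has to be judged on its own merits. Its architecture is the standard one and is sound: restrict to the compact slice $\|v\|=1$, $\|v^{\perp}\|\ge\tfrac12$; view $t\mapsto P_v(t)=\pr_W(\Theta(t).v)$ as a $W$-valued polynomial of degree at most $d$ whose coefficients depend linearly (hence continuously) on $v$; obtain a uniform lower bound for the nonconstant coefficients by compactness; and convert this into the $\beta^{d}$ bound via the rescaling $t\mapsto\beta t$ together with equivalence of norms on the finite-dimensional space of polynomials of degree $\le d$. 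The bookkeeping at the end (passing from $\sup_n\|\pr_W(n.v)-v_W\|\ge c_0\beta^d\|v\|$ to $\sup_n\|\pr_W(n.v)\|\ge \tfrac12 c_0\beta^d\|v\|$ via $\max(B,A-B)\ge A/2$) is also fine.

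The gap is precisely in the step you yourself flag as the crux: showing that $P_v$ is nonconstant whenever $v\notin W$, and neither justification you offer works as written. The averaging/Reynolds-operator route needs an $N$-invariant complement of $W$ and complete reducibility; for a unipotent action these fail in exactly the interesting cases (a single Jordan block has no invariant complement to its fixed line, and $N$ is non-compact, so there is no invariant probability measure to average against). The iterated-products remark is a true fact (some $b_{i_k}\cdots b_{i_1}v$ is a nonzero element of $W$), but the coefficients of $P_v$ are the $W$-projections of the \emph{symmetrized} sums $\tfrac{1}{j!}\sum_{\sigma}b_{i_{\sigma(1)}}\cdots b_{i_{\sigma(j)}}v$, and a nonzero individual product does not by itself force any of these projections to be nonzero. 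The correct argument is short: if $P_v$ is constant, then $\pr_W(n.v-v)=0$ for all $n\in N$ (a polynomial vanishing on a cube vanishes identically, and $\Theta$ is onto $N$), so $U:=\Span_{\R}\{n.v-v:\,n\in N\}\subseteq W^{\perp}$; the subspace $U$ is $N$-invariant because $n'(n.v-v)=(n'n.v-v)-(n'.v-v)$; if $U\neq\{0\}$ then, since $N$ acts by unipotent operators, Engel's theorem produces a nonzero $N$-fixed vector in $U$, which would lie in $W\cap W^{\perp}=\{0\}$, a contradiction; hence $U=\{0\}$, i.e.\ $v\in W$, contradicting $v^{\perp}\neq 0$. (This also exposes a hypothesis used only implicitly by both you and the paper's phrasing: the $b_i$ must be nilpotent endomorphisms, i.e.\ $N$ unipotent --- for $\mathfrak{n}=\R\,\mathrm{id}_V$ one has $W=\{0\}$ and the statement is false.) With this substitution for the nondegeneracy step, your proof closes.
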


In our case, $V=V_G$, $N=
\pi \left(\begin{bmatrix}
    1 & 0\\
    \R & 1
\end{bmatrix}\right)$ (identified with $\pi \left(\begin{bmatrix}
    1 & 0 & 0\\
    0 & 1 & 0\\
    0 &\R & 1
\end{bmatrix}\right))$ or $\pi \left(\begin{bmatrix}
    1 & 0 & 0\\
    0 & 1 & 0\\
    \R & 0 & 1
\end{bmatrix}\right))$, $m=1$, $d=n=\dim V$ and $W=\R v_n$. Note that (keep in mind that for convenience we omit the representation symbol $\pi$):

\begin{align}
& \left \|\begin{bmatrix}
\st & 0 \\
0 & \frac{1}{\st} 
\end{bmatrix}
\begin{bmatrix}
1 & 0 \\
-y & 1 
\end{bmatrix}.w_1 \right \| \nonumber \\
\ge & C \left \| \pr_W\left(\begin{bmatrix}
\st & 0 \\
0 & \frac{1}{\st} 
\end{bmatrix}
\begin{bmatrix}
1 & 0 \\
-y & 1 
\end{bmatrix}.w_1 \right) \right \| \tag{by the boundedness of the linear operator $\pr_W$}\\
= & C \left \| \begin{bmatrix}
\st & 0 \\
0 & \frac{1}{\st} 
\end{bmatrix}.
 \pr_W\left(
\begin{bmatrix}
1 & 0 \\
-y & 1 
\end{bmatrix}.w_1 \right) \right \| \tag{since the matrix \eqref{matrix rep of a_t under the basis} commutes with $\pr_W=\pr_{\R e_n}$}\\
=& C \frac{1}{\st^n} \left \|
 \pr_W\left(
\begin{bmatrix}
1 & 0 \\
-y & 1 
\end{bmatrix}.w_1 \right) \right \| \tag{see the last component of \eqref{matrix rep of a_t under the basis}}\\
=& C \frac{1}{\st^n} \left \|
 \pr_W\left(
\begin{bmatrix}
1 & 0 \\
-(y-a_1) & 1 
\end{bmatrix}\begin{bmatrix}
1 & 0 \\
-a_1 & 1 
\end{bmatrix}.w_1 \right) \right \|
\end{align}
By Lemma \ref{very important inequaity lemma of nimish-gorodnik}, for any $a_1\in \R$, we have that with $\beta=\frac{1}{2}t^{0.4}$, there exists $C'>0$ and $-y\in [a_1,a_1+\beta]$
\begin{align}
    \left \|
 \pr_W\left(
\begin{bmatrix}
1 & 0 \\
-y & 1 
\end{bmatrix}.w_1 \right) \right \| \ge C'\beta^n\|w_1\|.
\end{align}
Namely there exist $C'>0$ such that for some $-y\in [a_1,a_1+\beta]$
\begin{align}\label{expansion inequality}
    \left \|\begin{bmatrix}
\st & 0 \\
0 & \frac{1}{\st} 
\end{bmatrix}
\begin{bmatrix}
1 & 0 \\
-y & 1 
\end{bmatrix}.w_1 \right \| \ge C' \frac{1}{t^{0.1n}}\|w_1\|.
\end{align}

In general, if the $\SL^{(i)}(2,\R)$-representation is not irreducible, we may choose a basis for the non-fixed component $V_{1}^{(i)}$ and decompose it into irreducible representations. By taking the sup-norm with respect to the coefficients of a vector under this basis, we can again recover the inequality \eqref{expansion inequality}.

Now for $w=g_1.x$, under \eqref{inf of the projection is at least r} we have $\|w\|\ge \|g_1^{-1}\|^{-1}_{op}\|x\|\ge C''\|g_1^{-1}\|^{-1}_{op} \Pi(x) \ge C''\|g_1^{-1}\|^{-1}_{op} r,$ and the claim above guarantees that we have such an expansion for $a_t^{-1}u_v^{-1}.w$ by choosing either the order \eqref{one order} or the order \eqref{the other order} for the decomposition of $a_t^{-1}u_v^{-1}$. 

\subsection{Proof of $\eta{(\infty)}=0$}\label{section 4.2}

\vspace{5mm}
Recall
\begin{align}
    \phi_{\ga,\mu}(F,T):=& \frac{1}{\mu \left(V_{\ga,T}[g_1,g_2] \right)}\int_{V_{\ga,T}[g_1,g_2]} F(h^{-1}g_1\Gamma) d\mu(h)\\
    =& \frac{1}{\mu \left(V_{\ga,T}[g_1,g_2] \right)} \int_{a}^b \int_{D} F \left(\ga^{-1} a_t^{-1} u_v^{-1} g_1\Gamma \right) dv \frac{1}{t^4}dt
\end{align}
where 
\begin{equation}\label{definition of D}
    D=D_{\ga, t, T}:=\{v\in \R^2: \|G_3\ga H_1+G_4 v\ga H_1+G_4 H_3 t^{\frac{3}{2}}\|^2 \le -G_4^2 H_4^2 t^3+tT^2-\|G_1\ga H_1\|^2\},
\end{equation}
and $0<a<b$ are two positive roots of $-G_4^2 H_4^2 t^3+tT^2-\|G_1\ga H_1\|^2:=-Bt^3+tT^2-f_1(\ga)$.


\vspace{5mm}
 $D$ is the elliptic disk centered at $-G_4^{-1}G_3 -H_3H_1^{-1}\ga^{-1}t^{\frac{3}{2}} \in \R^2$ and it contains the disk with the same center whose radius is equal to the \textit{shorter radius} of the ellipse, which is $$C_{g_1,g_2,\ga}\sqrt{ -G_4^2 H_4^2 t^3+tT^2-\|G_1\ga H_1\|^2},$$
 for some constant $C_{g_1,g_2,\ga}>0$.
 
 The  displacement of the center from the origin has two parts: one part ($-G_4^{-1}G_3$) is constant and the other part ($-H_3H_1^{-1}\ga^{-1}t^{\frac{3}{2}}$) involves $t$ for which we have to handle separately. 

Our next step is to find a sub-interval of $ (a,b)$, in which we have $D \supset D(2\beta)-G_4^{-1}G_3=D(t^{0.4})-G_4^{-1}G_3$ for sufficiently large $T$ and in the complement of that sub-interval, the integral above will vanish as $T\to \infty$ (Here $D(2\beta)\subset \R^2$ is the disk centered at O with radius $2\beta$ which shouldn't be confused with $D_{\exp}$ from Lemma \ref{very important inequaity lemma of nimish-gorodnik}). Using the elementary geometry fact that  if there are two disks, then one disk contains the other if the difference of their radii is greater than the distance between their centers, and simplifying coefficients, we are actually interested in finding $t$ satisfying
\begin{equation}
    \sqrt{-Bt^3+tT^2-f_1(\ga)} \gtrsim
 c_1t^{\frac{3}{2}}+2\beta= c_1t^{\frac{3}{2}}+t^{0.4},
\end{equation}
for sufficiently large $T$.

Take $t=a+\de$ with $\de>0$. Since $a\asymp \frac{1}{T^2}$ is a root of $-Bt^3+tT^2-f_1(\ga)$, the equation above is equivalent to
\begin{equation}\label{important inequality determininig the range for t}
    \de T^2 \gtrsim
 B(3a^2\de+3a\de^2+\de^3)+(c_1(a+\de)^{\frac{3}{2}}+(a+\de)^{0.4})^2.
\end{equation}

Recall from \eqref{the truncation c} we used the truncation 
\begin{equation}
    c=c_{\ga,T}=\frac{\|G_1\ga H_1\|^{1.5}}{T^{1.5}}
\end{equation}
for the integral (this will serve as the upper bound below). Take
\begin{equation}\label{definition of delta T}
    \de_T=\frac{\|G_1\ga H_1\|^{2.5}}{T^{2.5}}.
\end{equation}
It is easy to see that for $t\in (a+\de_T, c)$, \eqref{important inequality determininig the range for t} holds as $T\to \infty$, since the left hand side is asymptotically $\frac{1}{T^{0.5}}$. Moreover,
\begin{align}
  & \lim_{T\to \infty}\left|\frac{1}{\mu \left(V_{\ga,T}[g_1,g_2] \right)} \int_{a}^{a+\de_T} \int_{D} F \left(\ga^{-1} a_t^{-1} u_v^{-1} g_1\Gamma \right) dv \frac{1}{t^4}dt \right| \nonumber \\
  \le & \lim_{T\to \infty} \frac{1}{\mu \left(V_{\ga,T}[g_1,g_2] \right)} \int_{a}^{a+\de_T} \pi \frac{-G_4^2 H_4^2 t^3+tT^2-\|G_1\ga H_1\|^2}{G_4^2 |\det(H_1)|}  \frac{1}{t^4}dt \nonumber \\
  =& 0 \tag{integrate three parts separately and recall $\mu \left(V_{\ga,T}[g_1,g_2] \right) \asymp_{\ga} T^6$}.
\end{align}

and by our computation in Section 3, we also have
\begin{align}
  \lim_{T\to \infty} \frac{1}{\mu \left(V_{\ga,T}[g_1,g_2] \right)} \int_{c}^{b} \int_{D} F \left(\ga^{-1} a_t^{-1} u_v^{-1} g_1\Gamma \right) dv \frac{1}{t^4}dt=0.
\end{align}

\vspace{5mm}
For the integration in the range between $a+\de_T$ and $c$, noticing that by Lemma \ref{our lemma expansion inequality for applying shah dichotomy} with $(a_1,a_2)=-G_4^{-1}G_3$, 
the first outcome in Theorem \ref{Shah dichotomy theorem} fails as $T\to \infty$ and therefore the second outcome holds. Here $D \supset D(\beta)-G_4^{-1}G_3$ is the open convex set involved. Let $\epsilon>0$, and let $C\subset G/\Ga$ be a compact set such that
\begin{equation*}
    m(v\in D:q_t(v)\Ga \notin C)< \e m(D),
\end{equation*}
whenever $t\in (a+\de_T,c)$.
Taking the complement,
\begin{equation}\label{the second outcome of Shah theorem}
    m(v\in D:q_t(v)\Ga \in C) > (1-\e) m(D).
\end{equation}
Now take $F\in C_c(G/\Ga)$ with $\mathbf{1}_C \le F \le 1$. As $T\to \infty$,
\begin{align*}
&\frac{1}{\mu \left(V_{\ga,T}[g_1,g_2] \right)}\int_{V_{\ga,T}[g_1,g_2]} F( h^{-1}g_1\Gamma) d\mu(h)\\
\ge& \frac{1}{\mu \left(V_{\ga,T}[g_1,g_2] \right)} \int_{a+\de_T}^{c} \int_{D} F \left(q_t(v)\Ga \right) dv \frac{1}{t^4}dt\\
\ge& \frac{1}{\mu \left(V_{\ga,T}[g_1,g_2] \right)} \int_{a+\de_T}^{c} \int_{D} \mathbf{1}_C \left(q_t(v)\Ga \right) dv \frac{1}{t^4}dt \\
\ge& \frac{1}{\mu \left(V_{\ga,T}[g_1,g_2] \right)} \int_{a+\de_T}^{c} \int_{D} (1-\e)m(D) \frac{1}{t^4}dt \tag{by \eqref{the second outcome of Shah theorem}}
\end{align*}

By the computation above we know
\begin{align*}
    &\lim_{T\to \infty}\frac{1}{\mu \left(V_{\ga,T}[g_1,g_2] \right)} \int_{a+\de_T}^{c} \int_{D} (1-\e)m(D) \frac{1}{t^4}dt \\
    =&  \lim_{T\to \infty}\frac{1}{\mu \left(V_{\ga,T}[g_1,g_2] \right)} \int_{a}^{b} \int_{D} (1-\e)m(D) \frac{1}{t^4}dt \\
    =& 1-\e
\end{align*}
Therefore,
\begin{equation}
    \liminf_{T\to \infty}\frac{1}{\mu \left(V_{\ga,T}[g_1,g_2] \right)}\int_{V_{\ga,T}[g_1,g_2]} F( h^{-1}g_1\Gamma) d\mu(h)\ge 1-\e
\end{equation}
and thus
\begin{equation}
    \limsup_{T\to \infty}\frac{1}{\mu \left(V_{\ga,T}[g_1,g_2] \right)}\int_{V_{\ga,T}[g_1,g_2]} \mathbf{1}_{\Supp (F)^c}( h^{-1}g_1\Gamma) d\mu(h)\le \e
\end{equation}

Recall that each coefficient $\frac{\mu \left(V_{\ga,T}[g_1,g_2] \right)}{\mu \left(H_T[g_1,g_2] \right)} \asymp \frac{1}{\|\ga\|^4}$ in the convex linear combination, and therefore by \eqref{head estimate for the sum over SL2} the sum is absolutely convergent. Choose $A_{\e}>0$ so that 
\begin{equation*}
    \sum_{\substack{\ga\in \SL(2,\Z),\\ \|G_1\ga H_1\|^2<M_T},\\
    \|\ga\|>A_{\e}}\frac{\mu \left(V_{\ga,T}[g_1,g_2] \right)}{\mu \left(H_T[g_1,g_2] \right)}\cdot \frac{1}{\mu \left(V_{\ga,T}[g_1,g_2] \right)}\int_{V_{\ga,T}[g_1,g_2]} \mathbf{1}_{\Supp (F)^c}(h^{-1}g_1\Gamma) d\mu(h)<\e
\end{equation*}
Hence,
\begin{align*}
     \eta(\infty) \le & \limsup_{T\to \infty}\phi_{\mu}(\mathbf{1}_{\Supp (F)^c},T) \\
\le & \limsup_{T\to \infty} \sum_{\substack{\ga\in \SL(2,\Z),\\ \|G_1\ga H_1\|^2<M_T\\ \|\ga\|\le A_{\e}}}\frac{\mu \left(V_{\ga,T}[g_1,g_2] \right)}{\mu \left(H_T[g_1,g_2] \right)}\cdot \frac{1}{\mu \left(V_{\ga,T}[g_1,g_2] \right)}\int_{V_{\ga,T}[g_1,g_2]} \mathbf{1}_{\Supp (F)^c}(h^{-1}g_1\Gamma) d\mu(h)+\e\\
\le & 2\e.
\end{align*}
Therefore $\eta(\infty)=0$.

\subsection{Proof of $G$-invariance}

Recall $U=\begin{bmatrix}
    I_2 & 0\\
    \R^2 & 1
\end{bmatrix}$. For a closed subgroup
$H$ of $G$, denote
\begin{align}
    &N(H,U):=\{g\in G:Ug\subset gH\},\\
    &S(H,U):=\cup_{H'\subsetneq H, H'\in \mathcal{H}_{\Ga}}N(H',U).
\end{align}
Consider,
\begin{equation}
    Y:=\bigcup_{H\in \mathcal{H}_{\Ga}}N(H,U)\Ga=\bigcup_{H\in \mathcal{H}_{\Ga}}[N(H,U)-S(H,U)]\Ga \subset G/\Ga,
\end{equation}
where $\mathcal{H}_
{\Ga}$ was defined above Theorem \ref{ratner's theorem on discreteness}.
The equality holds since for any $g\in N(H,U)$, if $g$ is also in $S(H,U)$, then $g$ must belong to $N(H',U)$ for some $H'\subsetneq H$ (note for Lie subgroups, this condition means $\dim H' < \dim H$) and $H'\in \mathcal{H}_{\Ga}$. Since $H'$ has strictly lower dimension, by repeating this argument we see eventually, $g$ will fall into some $N(\tilde{H}, U)-S(\tilde{H}, U)$ (with $S(\tilde{H}, U)$ possibly empty when $\tilde{H}$ has minimal dimension).

\begin{remark}
It follows from Dani’s generalization of Borel density theorem and Ratner’s topological
rigidity that $Y$ precisely form the set of points in $G/\Ga$ with non-dense orbit (Lemma 19.4 \cite{Starkov1996}).
    Specifically, $y \in Y$ if and only if the orbit $Uy$ is not dense in $G/\Ga$.
\end{remark}

A proof similar to that in the previous section shows that $Y$ is also a $\eta$-null set:
\begin{lemma}
    $\eta(Y)=0$.
\end{lemma}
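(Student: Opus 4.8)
The plan is to repeat, for each $H\in\mathcal{H}_\Ga$, the argument of Section \ref{section 4.2}, with the cusp of $G/\Ga$ replaced by a bounded tube around $N(H,U)\Ga$; the inputs are the discreteness statements of Theorem \ref{ratner's theorem on discreteness} and the expansion estimate of Lemma \ref{very important inequaity lemma of nimish-gorodnik}. Since $\mathcal{H}_\Ga$ is countable and $\dim H$ is bounded, I would induct on $\dim H$: in the inductive step one may assume $\eta\bigl(N(H',U)\Ga\bigr)=0$ whenever $H'\in\mathcal{H}_\Ga$ with $\dim H'<\dim H$, hence $\eta\bigl(S(H,U)\Ga\bigr)=0$, so it suffices to show $\eta\bigl([N(H,U)-S(H,U)]\Ga\bigr)=0$; summing over the countably many $H$ via $Y=\bigcup_H[N(H,U)-S(H,U)]\Ga$ then yields the lemma.

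Fix such an $H$ and linearise as in Section 4.1: let $V_H=\bigwedge^{\dim\frak h}\frak g$ carry the $\Ad$-induced representation, let $p_H\in V_H$ be a unit vector spanning $\bigwedge^{\dim\frak h}\frak h$, and set $\mathcal{A}_H=\{w\in V_H:u.w=w\text{ for all }u\in U\}$, the subspace of $U$-fixed vectors. By Theorem \ref{ratner's theorem on discreteness}, $\Ga.p_H$ is discrete in $V_H$; one has the standard identity $N(H,U)=\{g\in G:g.p_H\in\mathcal{A}_H\}$; and, $H$ being proper hence non-normal in the almost simple group $G$, the vector $p_H$ is not $G$-fixed, so $\Ga.p_H$ projects to a set bounded away from $0$ in the non-trivial part $V_1$, exactly as in \eqref{inf of the projection is at least r}. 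For $\rho,R>0$ set $\mathcal{O}_{\rho,R}:=\{g\Ga:\exists\,\ga'\in\Ga,\ \operatorname{dist}(g\ga'.p_H,\mathcal{A}_H)<\rho\text{ and }\|g\ga'.p_H\|\le R\}$. Using discreteness of $\Ga.p_H$ one checks that $N(H,U)\Ga=\bigcup_{R>0}\bigcap_{\rho>0}\mathcal{O}_{\rho,R}$, so it is enough to prove that $\lim_{\rho\to0}\limsup_{T\to\infty}\phi_\mu(\mathbf 1_{\mathcal{O}_{\rho,R}},T)=0$ for each fixed $R$ (with $\mathbf 1_{\mathcal{O}_{\rho,R}}$ replaced by a majorising continuous cutoff as in Section \ref{section 4.2}).

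The core estimate then mirrors Section \ref{section 4.2}. Decomposing $\phi_\mu(\mathbf 1_{\mathcal{O}_{\rho,R}},T)$ over $\ga\in\SL(2,\Z)$ into the convex combination with weights $\mu(V_{\ga,T}[g_1,g_2])/\mu(H_T[g_1,g_2])\asymp\|\ga\|^{-4}$, it suffices, by the elementary summation lemma used there together with \eqref{head estimate for the sum over SL2}, to bound $\limsup_T\phi_{\ga,\mu}(\mathbf 1_{\mathcal{O}_{\rho,R}},T)$ for each fixed $\ga$. Writing the integrand point as $\ga^{-1}a_t^{-1}u_v^{-1}g_1\Ga$ for $h=u_va_t\ga\in V_{\ga,T}[g_1,g_2]$, its membership in $\mathcal{O}_{\rho,R}$ forces $a_t^{-1}u_v^{-1}\bigl(g_1\ga'.p_H\bigr)$, for some $\ga'$, to lie within $O_\ga(\rho)$ of $\ga.\mathcal{A}_H$ and within norm $O_{\ga,R}(1)$, and by discreteness of $\Ga.p_H$ only a summable family of such $\ga'$ contributes. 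For a fixed admissible $w=g_1\ga'.p_H$ with non-zero component $w_\perp$ transverse to $\mathcal{A}_H$ — the case $w_\perp=0$ contributing only points of $S(H,U)\Ga$, already null by the inductive hypothesis — the map $v\mapsto a_t^{-1}u_v^{-1}w$ is polynomial, and, decomposing $a_t^{-1}u_v^{-1}$ through $\SL^{(1)}(2,\R)$ and $\SL^{(2)}(2,\R)$ and applying Lemma \ref{very important inequaity lemma of nimish-gorodnik} inside $V_H$ to $w_\perp$ exactly as in the proof of Lemma \ref{our lemma expansion inequality for applying shah dichotomy}, the set of $v$ in the range of integration for which $a_t^{-1}u_v^{-1}w$ stays within $\rho$ of $\ga.\mathcal{A}_H$ and bounded has measure $\le\kappa_\ga(\rho)\,m(D)$ with $\kappa_\ga(\rho)\to0$ as $\rho\to0$, uniformly in $t$ over the integration range and in $T$. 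This plays here the role that the failure of the first alternative of Theorem \ref{Shah dichotomy theorem} played for $\eta(\infty)=0$. Normalising by $\mu(V_{\ga,T}[g_1,g_2])\asymp_\ga T^6$ and separating the $t$-ranges $(a,a+\de_T)$, $(a+\de_T,c)$, $(c,b)$ as in Section \ref{section 4.2} gives $\limsup_T\phi_{\ga,\mu}(\mathbf 1_{\mathcal{O}_{\rho,R}},T)\le\kappa_\ga(\rho)$; summing over $\ga$ yields $\eta(\mathcal{O}_{\rho,R})\le\kappa(\rho)\to0$, hence $\eta\bigl(N(H,U)\Ga\bigr)=0$.

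I expect the main obstacle to be the quantitative non-concentration estimate $\kappa_\ga(\rho)\to0$: transferring the ``escape to infinity'' mechanism of Section \ref{section 4.2} to ``escape from a fixed bounded tube around the proper subspace $\mathcal{A}_H$'' requires, besides the expansion of $w_\perp$ provided by Lemma \ref{very important inequaity lemma of nimish-gorodnik}, a standard bound on the measure of the set where a polynomial of controlled degree and leading size is small, together with control of the sum over the infinitely many discrete translates $\ga'.p_H$ meeting a bounded set, which is afforded by the decay $\asymp\|\ga\|^{-4}$ of the weights. Alternatively, one may invoke the general form of Shah's Theorems 2.1--2.2, whose exceptional subgroups are precisely the members of $\mathcal{H}_\Ga$, in place of the special case recorded in Theorem \ref{Shah dichotomy theorem}.
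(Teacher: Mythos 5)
Your outline follows the paper's strategy at the top level (linearise via $p_H$, rule out concentration near the $U$-fixed subspace using the expansion mechanism of Lemma \ref{our lemma expansion inequality for applying shah dichotomy}, and sum the convex combination over $\ga\in\SL(2,\Z)$), but the core of your argument replaces the paper's citation of Shah's avoidance theorem (Theorem \ref{second dichotomy theorem of Shah}, i.e.\ Proposition 5.4 of \cite{Shah1994LimitDO}) with an attempted direct proof, and that direct proof has a genuine gap. The problem is the passage from the non-concentration estimate for a \emph{single} translate $w=g_1\ga'.p_H$ to the estimate for the tube $\mathcal{O}_{\rho,R}$, which is a \emph{union} over all $\ga'\in\Ga$. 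The condition defining membership in $\mathcal{O}_{\rho,R}$ is that $\|a_t^{-1}u_v^{-1}g_1\ga'.p_H\|\le R$ for \emph{some} $\ga'$, and since $a_t^{-1}u_v^{-1}$ contracts certain directions strongly over the integration range, the number of $\ga'$ contributing at a given $(t,v)$ is not controlled by the discreteness of $\Ga.p_H$ alone and can grow with $T$. Your appeal to ``the decay $\asymp\|\ga\|^{-4}$ of the weights'' does not address this: that decay concerns the decomposition of $H_T[g_1,g_2]$ into the pieces $V_{\ga,T}[g_1,g_2]$ indexed by $\ga\in\SL(2,\Z)$, which is unrelated to the sum over the translates $\ga'.p_H$ inside a single piece. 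Controlling the overlap of these tubes --- showing that where two or more translates are simultaneously close to $\mathcal{A}_H$ the point must lie near $S(H,U)\Ga$, and then excising a neighbourhood of $S(H,U)\Ga$ with the correct order of quantifiers --- is exactly the technical content of the linearisation method, and it is what forces the statement to be made for compact subsets of $[N(H,U)-S(H,U)]\Ga$ with the nested neighbourhoods $\Phi\supset F$ and $\Psi\supset C$.

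The paper sidesteps all of this: it writes $[N(H,U)-S(H,U)]\Ga$ as a countable union of compact sets (Proposition 3.1 of \cite{Mozes1995OnTS}), fixes such a compact $C$, invokes Theorem \ref{second dichotomy theorem of Shah} verbatim, excludes the first alternative using Lemma \ref{our lemma expansion inequality for applying shah dichotomy} (a trajectory confined to the compact set $\Phi$ would contradict the expansion), and reads off $m(v\in D:q_t(v)\Ga\in\Psi)<\e m(D)$ from the second alternative; the truncations $(a,a+\de_T)$, $(a+\de_T,c)$, $(c,b)$ and the summation over $\ga$ are then exactly as in Section \ref{section 4.2}. Your induction on $\dim H$ to dispose of $S(H,U)$ is a legitimate variant of the paper's remark that any $g\in N(H,U)\cap S(H,U)$ eventually lands in some $N(\tilde H,U)-S(\tilde H,U)$, and your closing sentence correctly identifies the fix --- invoke Shah's general avoidance statement rather than reprove it --- but as written the quantitative estimate $\kappa_\ga(\rho)\to 0$ uniformly over the contributing $\ga'$ is asserted, not proved, and it is the heart of the matter.
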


\begin{proof}
    By the discreteness of $\mathcal{H}_{\Ga}$ (Theorem \ref{ratner's theorem on discreteness}), it suffices to show
    \begin{equation}
        \eta([N(H,U)-S(H,U)]\Ga)=0,
    \end{equation}
    for all $H\in \mathcal{H}_{\Ga}$. Since $[N(H,U)-S(H,U)]\Ga$ is a countable union of compact subsets in $G/\Ga$ (See \cite{Mozes1995OnTS} Proposition 3.1), it suffices to show $\eta(C)=0$ for any compact subset $C$ of $[N(H,U)-S(H,U)]\Ga$.

The main tool in the proof is the following consequence of Prososition 5.4 in \cite{Shah1994LimitDO}:

\begin{theorem}\label{second dichotomy theorem of Shah}
    Let $d,n \in \N, \e>0, H\in \mathcal{H}_{\Ga}$. For any compact set $C \subset [N(H, U)-S(H,U)]\Ga$, there exists a compact set $F \subset V_G$ such that for any neighborhood $\Phi$ of $F$ in $V_G$, there exists a
neighborhood $\Psi$ of $C$ in $G/\Ga$ such that for any $q \in \mathcal{P}_{d,n}(G)$ and a bounded open convex set
$D \subset \R^n$, one of the following holds:
\begin{enumerate}
    \item There exist $\ga\in \Ga$ such that $q(D)\ga.p_H \subset \Phi$.
    \item $m(t\in D: q(t)\Ga \in \Psi)<\e m(D)$, where $m$ is the Lebesgue measure on $\R^n$.
\end{enumerate}
\end{theorem}
Fix $\e>0$. Now apply Theorem \ref{second dichotomy theorem of Shah} to $q_t(v)$, which gives a compact set $F\subset V_G$. Let $\Phi$ be a compact neighborhood of $F$ in $V_G$ and $\Psi \supset C$ be the neighborhood of $C$ given by the theorem. By the same argument as in the Section 4.3 (with Lemma \ref{our lemma expansion inequality for applying shah dichotomy} applied), we have that for $t\in (a+\de_T, c)$,
\begin{equation*}
    m(v\in D:q_t(v)\Ga \in \Psi)<\e m(D),
\end{equation*}
whenever $t\in (a+\de_T,c)$. Here $D$ is the elliptic disc defined in \eqref{definition of D}. $c$ and $\delta_T$ are defined in \eqref{the truncation c} and \eqref{definition of delta T}, respectively.

Now let $f\in C_c(G/\Ga)$ be such that $\mathbf{1}_C \le f \le 1$ and $\Supp(f)\subset \Psi$, it follows that
\begin{align*}
& \limsup_{T\to \infty}\frac{1}{\mu \left(V_{\ga,T}[g_1,g_2] \right)}\int_{V_{\ga,T}[g_1,g_2]} f( h^{-1}g_1\Gamma) d\mu(h)\\
=& \limsup_{T\to \infty} \frac{1}{\mu \left(V_{\ga,T}[g_1,g_2] \right)} \int_{a}^{b} \int_{D} f \left(q_t(v)\Ga \right) dv \\
=& \limsup_{T\to \infty} \frac{1}{\mu \left(V_{\ga,T}[g_1,g_2] \right)} \int_{a+\de_T}^{c} \int_{D} f \left(q_t(v)\Ga \right) dv \frac{1}{t^4}dt\\
\le& \limsup_{T\to \infty} \frac{1}{\mu \left(V_{\ga,T}[g_1,g_2] \right)} \int_{a+\de_T}^{c} \int_{D} \mathbf{1}_{\Psi} \left(q_t(v)\Ga \right) dv \frac{1}{t^4}dt \\
\le& \limsup_{T\to \infty} \frac{1}{\mu \left(V_{\ga,T}[g_1,g_2] \right)} \int_{a+\de_T}^{c} \int_{D} \e m(D) \frac{1}{t^4}dt \\
\le & \e,
\end{align*}
and by the argument of truncating the convex linear combination as we did in the end of subsection \ref{section 4.2}.


\begin{align*}
\eta(C)
\le & \limsup_{T\to \infty} \phi_{\mu}(\mathbf{1}_{C},T) \\
\le & \limsup_{T\to \infty} \sum_{\substack{\ga\in \SL(2,\Z),\\ \|G_1\ga H_1\|^2<M_T\\ \|\ga\|\le A_{\e}}}\frac{\mu \left(V_{\ga,T}[g_1,g_2] \right)}{\mu \left(H_T[g_1,g_2] \right)}\cdot \frac{1}{\mu \left(V_{\ga,T}[g_1,g_2] \right)}\int_{V_{\ga,T}[g_1,g_2]} \mathbf{1}_{C}(h^{-1}g_1\Gamma) d\mu(h)+\e\\
\le & 2\e.
\end{align*}
Hence $\eta(C)=0$ and thus $\eta(Y)=0$.
\end{proof}

Now we perform the ergodic decomposition of the $U$-invariant measure $\eta$. By Theorem 2.2 of \cite{Mozes1995OnTS}, each ergodic component of $\eta$ is either $G$-invariant or supported on $Y\cup \{\infty\}$. But we have just prove that $\eta(Y\cup \{\infty\})=0$. Therefore $\eta$ is $G$-invariant.

\section{Proof of Theorem \ref{equidistribution result on  G mod H}: The limiting measure on $H\backslash G$.}

We follow Section 2.5 of \cite{Gorodnik2004DistributionOL}.
First, we provide an explicit measurable section $$\sigma:H\backslash G \to Y\subset G,$$and then we define a measure $\nu_Y$ on $Y$ such that \begin{equation} \label{unfolding haar measure on G using section}
    dm=d\mu d\nu_Y,
\end{equation} where $\mu$ is the left Haar measure on $H$ and $m$ is a Haar measure on $G$.

To define the section,  let $\mathcal{F}_2 \subset \begin{bmatrix}
\SL(2,\R) & 0\\
0 & 1
\end{bmatrix}$ denote a measurable fundamental domain of 
\begin{equation}
   \begin{bmatrix}
    \SL(2,\Z) & 0\\
    0 & 1
    \end{bmatrix} \bigg\backslash 
   \begin{bmatrix}
    \SL(2,\R) & 0\\
    0 & 1
    \end{bmatrix} \bigg\slash  \begin{bmatrix}
    \SO(2,\R) & 0\\
    0 & 1
    \end{bmatrix} 
    \cong
    \SL(2,\Z)\backslash \SL(2,\R) /\SO(2,\R),
\end{equation}
and consider $$Y:=\mathcal{F}_2\cdot \SO(3,\R).$$

Then, we claim that the product map $H \times Y \to G$ is a Borel isomorphism, which defines a section $\sigma$ identifying $H\backslash G$ with $Y$. The surjectivity is clear from the block-wise Iwasawa decomposition. We only verify the injectivity here:

Suppose $h_1g_1s_1=h_2g_2s_2$ where $h_i\in H, g_i\in \mathcal{F}_2$ and $s_i\in \SO(3,\R)$. Then 
$$\begin{bmatrix}
* & 0 \\
* & *
\end{bmatrix}\ni g_2^{-1} h_2^{-1}h_1 g_1=s_2 s_1^{-1} \in \SO(3,\R),$$
and it follows from the definition of $\SO(3,\R)$ that both side must lie in $\begin{bmatrix}\SO(2,\R) & 0\\0 & 1 \end{bmatrix}$. So $h_2^{-1}h_1=g_2s_2s_1^{-1}g_1^{-1}\in \begin{bmatrix}\SL(2,\R) & 0\\0 & 1 \end{bmatrix}$. But $\begin{bmatrix}\SL(2,\R) & 0\\0 & 1 \end{bmatrix} \cap H= \begin{bmatrix}\SL(2,\Z) & 0\\0 & 1 \end{bmatrix}$. Hence $\ga g_1s_1=g_2s_2$ for some $\ga \in \begin{bmatrix}
\SL(2,\Z) & 0 \\0 & 1
\end{bmatrix}$. It follows that $g_2^{-1}\ga g_1=s_2 s_1^{-1}\in \begin{bmatrix}\SL(2,\R) & 0 \\0 & 1 \end{bmatrix} \cap \SO(3,\R)=\begin{bmatrix}\SO(2,\R) & 0 \\0 & 1 \end{bmatrix}$. Hence $g_1=g_2, s_1=s_2$ by the definition of fundamental domain. and $h_1=h_2$.


Next, let $T:=
\left\{ \begin{bmatrix}
t^{-\frac{1}{2}}\SL(2,\R) & 0\\
\R^2 & t
\end{bmatrix}:t>0 \right\}$ and $S:=\SO(3,\R)$. On $T$ we define the Haar measure in a similar way as the left Haar measure on $H$ (cf. Proposition \ref{Haar measure on $H$}). Below $d\tau=dv \frac{dt}{t^4} dg$, where $dg$ is the Haar measure on $\SL(2,\R)$ defined through standard Iwasawa decomposition, under which $\SO(2,\R)$ has volume $2\pi$. $d\rho$ is the standard Haar measure on $\SO(3,\R)$ that gives the total volume $8\pi^2$. Then, by using Theorem 8.32 of \cite{KN02}, we unfold a Haar measure on $G$ as follows
\begin{align}
    &\int_G f(g)dm(g)\\
   =&\int_S \int_T f(\tau \rho) d\tau d\rho \\
   =&\int_S\int_{\SL(2,\R)} \int_0^{\infty}\int_{\R^2} f(u_v a_t g \rho)    dv \frac{dt}{t^4}dg d\rho\\
   =&\int_S\sum_{\ga \in \SL(2,\Z)}\int_{\mathcal{F}_2}\int_{\SO(2,\R)}  \int_0^{\infty}\int_{\R^2} f(u_v a_t \ga g\rho'  \rho)    dv \frac{dt}{t^4}dg  d\rho'd\rho\\
   =&2\pi \int_S\sum_{\ga \in \SL(2,\Z)}\int_{\mathcal{F}_2} \int_0^{\infty}\int_{\R^2}  f(u_v a_t \ga g\rho)    dv \frac{dt}{t^4}dg d\rho\\
   =&2\pi \int_S\sum_{\ga \in \SL(2,\Z)} \int_0^{\infty}\int_{\R^2} \int_{\mathcal{F}_2}  f(u_v a_t \ga g\rho)  dg  dv \frac{dt}{t^4}d\rho\\
   =& 2\pi\int_H \int_S\int_{\mathcal{F}_2}  f(h g\rho)  dg d\rho  d\mu(h)\\
   =& \int_H \int_{\mathcal{F}_2}\int_S  f(h g\rho) 2\pi d\rho dg  d\mu(h) 
\end{align}

\vspace{3mm}
Then it follows that the measure $\nu_Y$ on $Y$ by  $$d\nu_Y:=2\pi d\rho dg,$$
satisfies \eqref{unfolding haar measure on G using section}.


Fix $Hg_0\in H\backslash G$. By identifying $H \backslash G$ with $Y$, we define a measure on $H \backslash G$ via
\begin{equation}
    d\nu_{Hg_0}(Hg):=\frac{\alpha(\sigma(Hg_0),\sigma(Hg))}{m(G/\Ga)}d\nu_{Y}(\sigma(Hg)).
\end{equation}
where $\alpha(\cdot,\cdot)$ is given in \eqref{definition and computation of alpha}. 

\vspace{3mm}
In view of Theorem 2.2 and Corollary 2.4 in \cite{Gorodnik2004DistributionOL} (duality principle), an immediate consequence of Theorem \ref{The $G$-invariance of limiting measure} is the following

\begin{corollary}
Fix $g_0\in G$. For any compactly supported $\varphi \in C_c(H\backslash G)$,
\begin{equation}
    \lim_{T\to \infty}\frac{1}{\mu(H_T)}\sum_{\ga \in \Ga_T}\varphi(Hg_0.\ga)= \int_{H\backslash G}\varphi(Hg)d\nu_{Hg_0}(Hg).
\end{equation}
\end{corollary}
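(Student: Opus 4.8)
The plan is to obtain the corollary as a direct application of the duality principle of Gorodnik and Weiss (Theorem 2.2 through Corollary 2.4 of \cite{Gorodnik2004DistributionOL}), all of whose hypotheses have been assembled in the preceding sections. Recall that this principle reduces the lattice-counting problem for the orbit $Hg_0.\Ga$ inside the norm balls $G_T=\{g\in G:\|g\|<T\}$ to three inputs: the uniform volume-growth property D1 for the skewed balls $H_T[g_1,g_2]$, the limit-volume-ratio property D2 furnishing the density $\alpha(g_1,g_2)$, and an equidistribution theorem for the averages $\phi_\mu(F,T,g_1,g_2)$ of $F\in C_c(G/\Ga)$ along those skewed balls, with the limit independent of $g_1,g_2$. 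Property D1 is the content of the Proposition on uniform volume growth, property D2 of the Proposition on limit volume ratios (which also gives the explicit formula for $\alpha$), and the required equidistribution is exactly Theorem \ref{The $G$-invariance of limiting measure}, whose limit is $\int_X F\,d\mu_X$ regardless of $g_1,g_2$.

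Concretely, I would proceed as follows. First, recall the measurable section $\sigma:H\backslash G\to Y\subset G$ and the decomposition $dm=d\mu\,d\nu_Y$ of Haar measure on $G$ constructed above, so that $H\backslash G$ is identified with $(Y,\nu_Y)$. Next, check the mild regularity (well-roundedness/admissibility) of the family $\{G_T\}$ required by \cite{Gorodnik2004DistributionOL}: since the Hilbert--Schmidt norm is bi-$\SO(3,\R)$-invariant, in the Cartan $KAK$ coordinates $G_T$ is $K$ times a star-shaped region in the positive chamber with real-analytic boundary, whence $m(G_{(1+\e)T})/m(G_T)\to 1$ as $\e\to 0^+$ uniformly in $T$; this is the only hypothesis not already verified in Sections 2--4. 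With D1, D2, the equidistribution theorem, and admissibility in hand, \cite{Gorodnik2004DistributionOL} yields that $\frac{1}{\mu(H_T)}\sum_{\ga\in\Ga_T}\varphi(Hg_0.\ga)$ converges, and that the limit is the integral of $\varphi$ against the measure on $Y\cong H\backslash G$ whose density with respect to $\nu_Y$ at $\sigma(Hg)$ equals $\alpha(\sigma(Hg_0),\sigma(Hg))/m(G/\Ga)$ --- that is, against $\nu_{Hg_0}$ as defined just above the statement. This is precisely the assertion of the corollary.

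The only genuinely delicate point --- and the place where the novelty that $H$ has infinitely many non-compact connected components enters --- is that D1, D2 and the equidistribution statement are required for the full group $H=\bigsqcup_{\ga\in\SL_{2,1}(\Z)}U A\ga$, not for a single connected component. This has already been arranged: in Proposition \ref{computation for H and V} the volume asymptotics are summed over $\SL(2,\Z)$ with convergence guaranteed by the Lemma on $\sum_\ga\|A\ga B\|^{-\alpha}$, and in Theorem \ref{The $G$-invariance of limiting measure} the functional $\phi_\mu(F,T)$ was treated as a convex combination over the components with weights $\mu(V_{\ga,T}[g_1,g_2])/\mu(H_T[g_1,g_2])=\Theta(\|\ga\|^{-4})$ that are absolutely summable uniformly in $T$. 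Consequently no new estimates are needed; the proof amounts to matching the hypotheses of \cite{Gorodnik2004DistributionOL}, Theorem 2.2 and Corollary 2.4, with the statements of Sections 2--4 and reading off the conclusion. I expect the main (and still very mild) obstacle to be the careful verification of the admissibility of the Hilbert--Schmidt norm balls $G_T$, which however follows from standard Cartan-decomposition volume estimates.
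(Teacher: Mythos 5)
Your proposal is correct and follows essentially the same route as the paper: the corollary is obtained by feeding the volume properties D1 and D2 from Section 2, the equidistribution statement of Theorem \ref{The $G$-invariance of limiting measure}, and the section $\sigma$ with $dm=d\mu\,d\nu_Y$ into Theorem 2.2 and Corollary 2.4 of \cite{Gorodnik2004DistributionOL}, exactly as the paper does. Your additional remark on verifying well-roundedness of the Hilbert--Schmidt norm balls $G_T$ (via bi-$\SO(3,\R)$-invariance and the Cartan decomposition) is a reasonable and correct observation about a hypothesis the paper leaves implicit.
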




Now we would like to replace the normalization factor $\mu(H_T)$ by $\# \Ga_T$.

By Theorem 1.7 of \cite{GorodnikNevo2012},
\begin{equation}
    \lim_{T\to \infty}\frac{m(G_T)}{\# \Ga_T}=m(G/\Ga).
\end{equation}

On the other-hand, by Lemma 6 of \cite{Gorodnik2003LatticeAO} (we choose the standard parameterization so that the volumes of $\SO(2,\R)$ and $\SO(3,\R)$ are $2\pi$ and $8\pi^2$ respectively),
\begin{equation}
    m(G_T)\sim 8\pi^2\cdot \frac{\pi^2}{24}T^6=\frac{\pi^4}{3}T^6.
\end{equation}
Hence, by Proposition \ref{computation for H and V},
\begin{equation}
    c_{\Ga}:=\lim_{T\to \infty}\frac{\mu(H_{T})}{\#\Ga_T}=\lim_{T\to \infty}\frac{\mu(H_{T})}{m(G_T)}\frac{m(G_T)}{\#\Ga_T}=
    \frac{m(G/\Ga)}{2\pi^3}\sum_{\ga \in \SL(2,\Z)}\frac{1}{\|\ga \|^4},
\end{equation}
which shows that $$\lim_{T\to \infty}\frac{1}{\#\Ga_T}\sum_{\ga \in \Ga_T}\varphi(Hg_0.\ga)= c_\Ga\int_{H\backslash G}\varphi(Hg)d\nu_{Hg_0}(Hg).$$

Now we compute $\nu_{Hg_0}(H\backslash G)$.  For $g_0,y \in Y$, we consider the decompositions 
\begin{equation} \label{decompositions of g_0 and y in Y}
    g_0:=\begin{bmatrix} G_1 & 0 \\0 & 1 \end{bmatrix}\rho_0, y:=\begin{bmatrix} H_1 & 0 \\0 & 1 \end{bmatrix} \rho,
\end{equation}
where $G_1,G_2\in\mathcal{F}_2$ and $\rho_0,\rho\in\SO(3,\R).$ Hence $\alpha$ in \eqref{definition and computation of alpha} takes a more simpler form (note $g_0^{-1}:=\rho_0^{-1}\begin{bmatrix} G_1^{-1} & 0 \\0 & 1 \end{bmatrix}$)
\begin{align*}
    \alpha(g_0,y)
    =& \frac{\sum_{\ga\in \SL(2,\Z)}\frac{1}{\|G_1^{-1}\ga H_1\|^4}}{\sum_{\ga\in \SL(2,\Z)}\frac{1}{\|\ga \|^4}}.
  \end{align*}

Using the invariance of the $\SO(3,\R)$ invariance of the Hilbert-Schmidt norm, we compute 
\begin{align*}
    &m(G/\Ga)\nu_{Hg_0}(H\backslash G)\\
    =&\int_{Y}\alpha(g_0,y)d\nu_{Y}\\
    =&\frac{8\pi^2}{\sum_{\ga\in \SL(2,\Z)}\frac{1}{\|\ga \|^4}} \sum_{\ga\in \SL(2,\Z)}\int_{\SL(2,\Z)\backslash \SL(2,\R)/\SO(2,\R)}\frac{1}{\|G_1^{-1}\ga H_1\|^4} dH_1 \\
    =&\frac{4\pi}{\sum_{\ga\in \SL(2,\Z)}\frac{1}{\|\ga \|^4}} \int_{\SL(2,\R)}\frac{1}{\|G_1^{-1}g\|^4}  dg\\
    =&\frac{4\pi}{\sum_{\ga\in \SL(2,\Z)}\frac{1}{\|\ga \|^4}} \int_{\SL(2,\R)}\frac{1}{\|g\|^4}  dg.
\end{align*}

To compute the above integral, we consider the $\SL(2,\R)$ Iwasawa decomposition,
\begin{equation*}
    g=\begin{bmatrix}
    1 & x\\
    0 & 1
    \end{bmatrix}\begin{bmatrix}
    a & 0\\
    0 & \frac{1}{a}
    \end{bmatrix}k.
\end{equation*}

Noticing that  $k$ preserves the Euclidean $2$-norm, the integral above is equal to

\begin{align*}
    \int_{\SL(2,\R)}\frac{1}{\|g\|^4}  dg=&2\pi\int_{\R_+}\int_{\R} \frac{1}{\left( a^2+\frac{1}{ a^2}+\frac{x^2}{a^2} \right)^2} \frac{dxda}{a^3}\\
    =&\pi\int_{\R_+}\int_{\R} \frac{1}{\left( y+\frac{1}{ y}+\frac{x^2}{y} \right)^2} \frac{dxdy}{y^2}\tag{let $a=\sqrt{y}$}\\
    =&\pi\int_{\R_+}\int_{\R} \frac{1}{\left(1+x^2+y^2 \right)^2} dxdy\\
    =&\frac{\pi^2}{2}
\end{align*}

Therefore, 
\begin{equation}
    \nu_{Hg_0}(H\backslash G)=\frac{2\pi^3}{m(G/\Ga)\sum_{\ga\in \SL(2,\Z)}\frac{1}{\|\ga\|^4}}
\end{equation}

and thus 
$$c_{\Gamma}\nu_{Hg_0}(H\backslash G)=1.$$
Therefore, $\tilde\nu_{Hg_0}:=c_{\Ga}\nu_{Hg_0}$ is a probability measure, and we conclude 
\begin{equation}
    \lim_{T\to \infty}\frac{1}{\# \Ga_T}\sum_{\ga \in \Ga_T}\varphi(Hg_0.\ga)= \int_{H\backslash G}\varphi(Hg)d\tilde \nu_{Hg_0}(g).
\end{equation}

\vspace{5mm}
We express $\tilde\nu_{Hg_0}$ as $$d\tilde\nu_{Hg_0}=\Phi_{Hg_0}(y)dg d\rho,$$ where $$\Phi_{Hg_0}(y):=\sum_{\ga\in \GL(2,\Z)}\frac{1}{\|G_1^{-1}\ga H_1\|^4},$$  $\Phi_{Hg_0}(y)dg$ is a probability measure on $\mathcal{F}_2$, and $d\rho$ is a probability measure on $\SO(3,\R)$. Now we interpret the coefficient $\Phi_{Hg_0}(y)$
in terms of rank-two discrete subgroups in $\R^3$. Let $(\La_0,w_0)$ and $(\La,w)$ denote the oriented rank-two discrete subgroups of $\R^3$ corresponding to $g_0$ and $y$, respectively. Let $e_1,e_2$ be the canonical basis of $\R^2$, written in terms of row vectors:

Then by \eqref{decompositions of g_0 and y in Y} it follows that  $$\mathscr{B}_0:=\{(e_1G_1,0)\rho_0,(e_2G_1,0)\rho_0 \},$$ and $$\mathscr{B}:=\{(e_1\ga H_1,0)\rho,(e_2\ga H_1,0)\rho\}$$ form  $\Z$-bases of $\La_0$ and $\La$, respectively.

Consider the following linear maps between two-dimensional subspaces of $\R^3$:
\begin{equation}
    T_{\mathscr{B}_0}: \Span_{\R}\{(e_1,0),(e_2,0)\} \to \Span_{\R}\{\mathscr{B}_0\}, (e_i,0)\mapsto (e_iG_1,0)\rho_0, i=1,2
\end{equation}
and
\begin{equation}
    T_{\mathscr{B}}: \Span_{\R}\{(e_1,0),(e_2,0)\} \to \Span_{\R}\{\mathscr{B}\}, (e_i,0)\mapsto (e_i\ga H_1,0)\rho, i=1,2.
\end{equation}

Recall from the introduction (see \eqref{defining hilbert-schmits for op. from hyp. to hyp.}) the Hilbert-Schmidt norm   $\|T\|_{\text{HS}}$ of an operator $T$ from the two-dimensional subspace $U\subset \R^3$ to the two-dimensional subspace $V\subset \R^3$, which is computed by choosing an orthonormal basis $\{u_1,u_2\}$ of $U$ and evaluating 
\begin{equation}
    \|T\|^2_{\text{HS}}:=\|Tu_1\|^2+\|Tu_2\|^2,
\end{equation}
where the norm on the right hand side is the usual Euclidean norm on $\R^3$. Using the orthonormal basis $\{(e_1,0)\rho_0 , (e_2,0)\rho_0\}$ of $\Span_\R\{\mathcal{B}_0\}$, we get $$\|T_{\mathscr{B}}\circ T_{\mathscr{B}_0}^{-1}\|_{\text{HS}}^2=\|G_1^{-1}\ga H_1\|^2,$$
which shows
\begin{equation}\label{intrinsic interpretation of the sum}
    \Phi_{g_0}(y)=\sum_{\ga\in \GL(2,\Z)}\frac{1}{\|G_1^{-1}\ga H_1\|^4}=\sum_{\Span_{\Z}\mathscr{B}=\La}\frac{1}{\|T_{\mathscr{B}}\circ T_{\mathscr{B}_0}^{-1}\|_{\text{HS}}^4}=\Psi_{\La_0}(\La),
\end{equation}

where $\Psi_{\La_0}(\La)$ defined in \eqref{defining psi_La_0}.
It's now straight forward to verify that the measure $\tilde\nu_{x_0}$ for $x_0=(\Span_\Z\{e_1,e_2\},e_3)\cdot g_0$ defined before Theorem \ref{equidistribution result on  G mod H} is identified with $\tilde\nu_{H_{g_0}}$:


Let $X_2=\SL(2,\Z)\backslash\SL(2,\R)$. For any $f\in C_c(X_{2,3})$, we have
\begin{align}
    \tilde\nu_{x_0}(f)
    :=&\int_{\mathbb S^2}\int_{\pi_{\perp}^{-1}(w)}f((\La,w),\rho_w) \Phi_{\La_0}(\La)d((\rho_w)_*\mu_{e_3})(\La)d\mu_{\mathbb S^2}(w)\\
    =&\int_{\mathbb S^2}\int_{\pi_{\perp}^{-1}(e_3)}f((\La,e_3),\rho_w) \Psi_{\La_0}(\La)d\mu_{e_3}(\La)d\mu_{\mathbb S^2}(w)\\
    =&\int_{\mathbb S^2}\int_{X_2}f(\Z^2\times 0,\eta\rho_w) \Psi_{\La_0}(\Z^2\times 0)d\eta d\mu_{\mathbb S^2}(w)\\
    =&\frac{1}{\vol(\SO(2,\R))}\int_{\mathbb S^2}\int_{\SO(2,\R)}\int_{X_2}f(\Z^2\times 0,\eta k \rho_w) \Psi_{\La_0}(\Z^2\times 0)d\eta dk d\mu_{\mathbb S^2}(w)\\
    =&\frac{1}{\vol(\SO(2,\R))}\int_{ \SO(3,\R)}\int_{X_2}f(\Z^2\times 0,\eta\rho) \Psi_{\La_0}(\Z^2\times 0)d\eta d\rho\\ 
    =&\frac{1}{\vol(\SO(2,\R))}\int_{ \SO(3,\R)}\int_{\mathcal F_2}\int_{\SO(2,\R)}f(\Z^2\times 0,\eta\rho k) \Psi_{\La_0}(\Z^2\times 0)d\eta d\rho dk\\
    =&\int_{ \SO(3,\R)}\int_{\mathcal F_2}f(\Z^2\times 0,\eta\rho) \Psi_{\La_0}(\Z^2\times 0)d\eta d\rho
\end{align}

The last line is the same as $\int_{H\backslash G}\varphi(Hg)d\tilde\nu_{Hg_0}(g)=\int_{H\backslash G}\varphi(Hg)d\tilde c_{\Ga}\nu_{Hg_0}(g)$ (note that both $\tilde \nu_{x_0}$ and $\tilde \nu_{Hg_0}$ are probability measures).
This proves Theorem \ref{equidistribution result on  G mod H}. 


\begin{appendices}

\section{Two copies of $\SL(2,\R)$ generates $\SL(3,\R)$}

\begin{lemma}\label{sl2 generating sl3}
Let 
\[
G_{1}:=\left\{\begin{bmatrix}
a & 0 & b\\
0 & 1 & 0\\
c & 0 & d
\end{bmatrix} : ad-bc=1\right\} ,
\]
and 
\[
G_{2}:=\left\{ \begin{bmatrix}
1 & 0 & 0\\
0 & a & b\\
0 & c & d
\end{bmatrix}: ad-bc=1\right\} .
\]
Let $G$ be a closed subgroup of $\SL(3,\R)$ which includes
$G_{1}$ and $G_{2}$. Then we claim that $G=\SL(3,\R)$.
\end{lemma}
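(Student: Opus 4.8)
The plan is to reduce everything to a Lie-algebra computation and then invoke connectedness of $\SL(3,\R)$. Since $G$ is closed in $\SL(3,\R)$, Cartan's closed-subgroup theorem makes it an embedded Lie subgroup, so it has a Lie algebra $\mathfrak g\subseteq\mathfrak{sl}(3,\R)$, and $\mathfrak g$ is closed under the Lie bracket. The containments $G_1,G_2\subseteq G$ give $\operatorname{Lie}(G_1),\operatorname{Lie}(G_2)\subseteq\mathfrak g$. Writing $E_{ij}$ for the elementary matrix with a $1$ in position $(i,j)$, one has $\operatorname{Lie}(G_1)=\Span_\R\{E_{13},E_{31},E_{11}-E_{33}\}$ and $\operatorname{Lie}(G_2)=\Span_\R\{E_{23},E_{32},E_{22}-E_{33}\}$, so $\mathfrak g$ already contains these six linearly independent matrices.

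The only real step is a two-line bracket computation. Using the rule $E_{ij}E_{kl}=\delta_{jk}E_{il}$ one finds $[E_{13},E_{32}]=E_{12}$ and $[E_{23},E_{31}]=E_{21}$, whence $E_{12},E_{21}\in\mathfrak g$. Adjoining these to the six matrices above produces the six off-diagonal generators $E_{12},E_{13},E_{21},E_{23},E_{31},E_{32}$ together with the two diagonal elements $E_{11}-E_{33}$ and $E_{22}-E_{33}$ spanning a Cartan subalgebra; these eight matrices form a basis of $\mathfrak{sl}(3,\R)$. Hence $\mathfrak g=\mathfrak{sl}(3,\R)$.

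To conclude I would argue that the identity component $G^{0}$ is the connected Lie subgroup of $\SL(3,\R)$ with Lie algebra $\mathfrak g=\mathfrak{sl}(3,\R)$, which is $\SL(3,\R)$ itself since $\SL(3,\R)$ is connected; as $G^{0}\subseteq G\subseteq\SL(3,\R)$, this forces $G=\SL(3,\R)$.

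I do not anticipate a genuine obstacle; the content is the bracket computation, which is routine. If one prefers to avoid Lie theory, there is a parallel group-theoretic argument: $G_1$ contains the root subgroups $\{I+tE_{13}:t\in\R\}$ and $\{I+tE_{31}:t\in\R\}$, $G_2$ contains $\{I+tE_{23}:t\in\R\}$ and $\{I+tE_{32}:t\in\R\}$, and group commutators of these produce $\{I+tE_{12}:t\in\R\}$ and $\{I+tE_{21}:t\in\R\}$; since $\SL(3,\R)$ is generated by its elementary matrices (Gaussian elimination over the field $\R$), we again get $G=\SL(3,\R)$, and this version does not even need the closedness hypothesis. The single point requiring a word of care in the Lie-algebra route is the passage from equality of Lie algebras to equality of groups, which is exactly where closedness (via Cartan's theorem) and connectedness of $\SL(3,\R)$ enter.
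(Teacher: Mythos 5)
Your proposal is correct, and it takes a genuinely different route from the paper. The paper works at the group level throughout: it computes the explicit commutator of the unipotents $I+s_1E_{13}\in G_1$ and $I+s_2E_{32}\in G_2$ to produce $I+s_1s_2E_{12}$, obtains $I+rE_{21}$ by transposing, recovers the diagonal torus of the upper-left block as a limit of products $n_r\,u_\epsilon\, n_s$ with $\epsilon\to 0$ (this is where the paper uses closedness of $G$), concludes that the third copy $G_3$ of $\SL(2,\R)$ lies in $G$, and then finishes with a dimension-matching argument showing that the product of the diagonal and the two unipotent triangular subgroups fills out a neighborhood of the identity. Your main argument instead passes immediately to Lie algebras via Cartan's closed-subgroup theorem: the bracket identities $[E_{13},E_{32}]=E_{12}$ and $[E_{23},E_{31}]=E_{21}$, which are the infinitesimal shadows of the paper's commutator computation, give all eight basis elements of $\mathfrak{sl}(3,\R)$ at once, and connectedness of $\SL(3,\R)$ finishes the job. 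This is shorter and avoids the paper's limiting argument for the torus, at the cost of invoking the Lie subgroup--subalgebra correspondence. Your group-theoretic alternative (root subgroups, Chevalley-type commutators, and the fact that elementary matrices generate $\SL(3,\R)$ over a field) is closest in spirit to the paper's first half but is actually a sharper statement, since it shows the abstract subgroup generated by $G_1$ and $G_2$ is already all of $\SL(3,\R)$, with no closedness hypothesis needed. All three arguments are sound.
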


\begin{proof}

Let us first show that 
\[
G\supseteq G_{3}:=\left\{ \begin{bmatrix}
a & b & 0\\
c & d & 0\\
0 & 0 & 1
\end{bmatrix}: ad-bc=1\right\}. 
\]

Observe that
\begin{align*}
&\begin{bmatrix}
1 & 0 & 0\\
0 & 1 & 0\\
0 & -s_{2} & 1
\end{bmatrix}\begin{bmatrix}
1 & 0 & s_{1}\\
0 & 1 & 0\\
0 & 0 & 1
\end{bmatrix}  \begin{bmatrix}
1 & 0 & 0\\
0 & 1 & 0\\
0 & s_{2} & 1
\end{bmatrix}\begin{bmatrix}
1 & 0 & -s_{1}\\
0 & 1 & 0\\
0 & 0 & 1
\end{bmatrix}\\
=&\begin{bmatrix}
1 & 0 & s_{1}\\
0 & 1 & 0\\
0 & -s_{2} & 1
\end{bmatrix}  \begin{bmatrix}
1 & 0 & 0\\
0 & 1 & 0\\
0 & s_{2} & 1
\end{bmatrix}\begin{bmatrix}
1 & 0 & -s_{1}\\
0 & 1 & 0\\
0 & 0 & 1
\end{bmatrix}\\
=&\begin{bmatrix}
1 & s_{1}s_{2} & s_{1}\\
0 & 1 & 0\\
0 & 0 & 1
\end{bmatrix}  \begin{bmatrix}
1 & 0 & -s_{1}\\
0 & 1 & 0\\
0 & 0 & 1
\end{bmatrix}\\
= & \begin{bmatrix}
1 & s_{1}s_{2} & 0\\
0 & 1 & 0\\
0 & 0 & 1
\end{bmatrix}.
\end{align*}
So for all $s\in\R$, we have $\begin{bmatrix}
1 & s & 0\\
0 & 1 & 0\\
0 & 0 & 1
\end{bmatrix}\in G$.
 By applying the matrix transpose to the above, we get that for all
$r\in\R$, $\begin{bmatrix}
1 & 0 & 0\\
r & 1 & 0\\
0 & 0 & 1
\end{bmatrix}\in G$.
 Observe that 
Let $\epsilon$ be non-zero close to zero, and note that for $r=\frac{e^{t}-1}{\epsilon}$
and $s=\frac{-r}{1+r\epsilon}$ we get
\begin{align*}
\begin{bmatrix}
1 & r\\
0 & 1
\end{bmatrix}\begin{bmatrix}
1 & 0\\
\epsilon & 1
\end{bmatrix}\begin{bmatrix}
1 & s\\
0 & 1
\end{bmatrix} & =\begin{bmatrix}
1+r\epsilon & r+s+rs\epsilon\\
\epsilon & 1+s\epsilon
\end{bmatrix}\\
 & =\begin{bmatrix}
e^{t} & 0\\
\epsilon & e^{-t}
\end{bmatrix}.
\end{align*}
So, for all $\epsilon>0$ and $t\in\R$ we have 
\[
\begin{bmatrix}
e^{t} & 0 & 0\\
\epsilon & e^{-t} & 0\\
0 & 0 & 1
\end{bmatrix}\in G
\]
and since $G$ is closed, we get by taking $\epsilon\to0$ that 
\[
\begin{bmatrix}
e^{t} & 0 & 0\\
0 & e^{-t} & 0\\
0 & 0 & 1
\end{bmatrix}\in G.
\]

The above show that in fact $G_{3}\leq G$. It follows that
that the full diagonal
group and the unipotent upper and lower triangular subgroups are all contained in $G$. By using the dimension matching argument on Lie algebras, we see their product form a neighborhood of identity in $\SL(3,\R)$.

Therefore, $G=\SL(3,\R)$ follows from the fact that a neighborhood of identity of a Lie group generates the whole group.
\end{proof}

\vspace{1cm}
\section{Estimates on the number of $\SL(2,\Z)$ points in skewed balls}

Consider
\[
B_{\tau}=\left\{ g\in\SL(2,\R):\left|\left|g\right|\right|\leq\tau\right\} .
\]
From \cite{GorodnikNevo2012} Example 4.3, we have the following estimate 
\[
N(\tau):=\left|\SL(2,\Z)\cap B_{\tau}\right|=\frac{\text{vol}(B_{\tau})}{\text{vol}(\SL(2,\R)/\SL(2,\Z))}+O_{\eta}(\text{vol}(B_{\tau})^{5/6+\eta}).
\]

{We now compute $\text{vol}(B_{\tau})$. First, recall that
the Haar measure on $\SL(2,\R)$ is expressed by 
\[
\nu(f)=\int f(n_{x}a_{t}k_{\theta})\frac{1}{e^{t}}d\theta\ dt\ dx,
\]
where $n_{x}=\begin{bmatrix}
1 & x\\
 & 1
\end{bmatrix},$ $a_{t}=\begin{bmatrix}
e^{t/2} & 0\\
 & e^{-t/2}
\end{bmatrix},\ k_{\theta}=\begin{bmatrix}
\cos\theta & -\sin\theta\\
\sin\theta & \cos\theta
\end{bmatrix}$. 

\vspace{3mm}
It follows from the bi-invariance of the Haar measure that 
\[
\text{vol}(B_{\tau})=\text{vol}\left\{ g\in\SL(2,\R):\left|\left|g\right|\right|\leq\tau\right\} .
\]
and since the norm is $k_{\theta}$ invariant, we get
\[
\text{vol}\left\{ g\in\SL(2,\R):\left|\left|g\right|\right|\leq\tau\right\} =2\pi\text{vol}\left\{ \begin{bmatrix}
e^{t/2} & xe^{-t/2}\\
 & e^{-t/2}
\end{bmatrix}:\left|\left|\begin{bmatrix}
e^{t/2} & xe^{-t/2}\\
 & e^{-t/2}
\end{bmatrix}\right|\right|\leq\tau\right\} .
\]
We have 
\[
e^{t}+\left(x^{2}+1\right)e^{-t}\leq\tau^{2}\iff x^{2}\leq e^{t}\left(\tau^{2}-2\cosh t\right).
\]
By Fubini's theorem, we see that
\[
\text{vol}\left\{ \begin{bmatrix}
e^{t/2} & xe^{-t/2}\\
 & e^{-t/2}
\end{bmatrix}:\left|\left|\begin{bmatrix}
e^{t/2} & xe^{-t/2}\\
 & e^{-t/2}
\end{bmatrix}\right|\right|\leq\tau\right\}
=\int_{\cosh t\leq\tau^{2}/2}\underbrace{\frac{2\sqrt{\tau^{2}-2\cos ht}}{e^{t/2}}}_{=f(t)}dt=I_{+}+I_{-},
\]
where $I_{+}=\int_{t\geq0,\ \cosh t\leq\tau^{2}/2}f(t)dt$, $I_{-}=\int_{t\leq0,\ \cosh t\leq\tau^{2}/2}f(t)dt$. }
\begin{enumerate}
\item {Computation of $I_{+}$: The function $\cosh t$ is injective
on $\R_{\geq0}$. and its inverse $\text{\ensuremath{\cosh}}^{-1}:[1,\infty)\to\R_{\geq0}$
has the following expression
\[
\text{\ensuremath{\cosh}}^{-1}(\alpha)=\ln(\alpha+\sqrt{\alpha^{2}-1}).
\]
 We have the following identities,
\[
\sinh(\text{cosh}^{-1}(\alpha))=\sqrt{\alpha^{2}-1},
\]
We put $\alpha=\cosh t,$ then $d\alpha=\sinh t\ dt$, which gives
\[
I_{+}=\int_{\alpha\leq\tau^{2}/2}\frac{2\sqrt{\tau^{2}-2\alpha}}{\sqrt{\alpha+\sqrt{\alpha^{2}-1}}}\frac{1}{\sqrt{\alpha^{2}-1}}d\alpha.
\]
Put $\beta=\frac{\alpha}{\tau^{2}}$, then 
\[
I_{+}=\tau^{2}\int_{\beta\leq1/2}\frac{2\sqrt{\tau^{2}-2\tau^{2}\beta}}{\sqrt{\tau^{2}\beta+\sqrt{\tau^{4}\beta^{2}-1}}}\frac{1}{\sqrt{\tau^{4}\beta^{2}-1}}d\beta
\]
}
\item {Computation of $I_{-}$: The function $\cosh t$ is injective
on $\R_{\leq0}$ and its inverse $\text{\ensuremath{\cosh}}^{-1}:[1,\infty)\to\R_{\leq0}$
has the following expression
\[
\text{\ensuremath{\cosh}}^{-1}(\alpha)=-\ln(\alpha+\sqrt{\alpha^{2}-1}).
\]
 We have the following identities,
\[
\sinh(\text{cosh}^{-1}(\alpha))=-\sqrt{\alpha^{2}-1},
\]
We put $\alpha=\cosh t,$ then $d\alpha=\sinh t\ dt$, which gives,
\[
I_{+}=\int_{\alpha\leq\tau^{2}/2}2\sqrt{\tau^{2}-2\alpha}\sqrt{\alpha+\sqrt{\alpha^{2}-1}}\frac{1}{\sqrt{\alpha^{2}-1}}d\alpha.
\]
Put $\beta=\frac{\alpha}{\tau^{2}}$, then 
\[
I_{+}=\tau^{2}\int_{\beta\leq1/2}2\sqrt{\tau^{2}-2\tau^{2}\beta}\sqrt{\tau^{2}\beta+\sqrt{\tau^{4}\beta^{2}-1}}\frac{1}{\sqrt{\tau^{4}\beta^{2}-1}}d\beta.
\]
}
\end{enumerate}
{Define 
\[
I_{1}=\int_{\beta\leq1/2}\frac{2\sqrt{1-2\beta}}{\sqrt{2\beta}}\frac{1}{\sqrt{\beta^{2}-1}}d\beta
\]
and
\[
I_{2}=\int_{\beta\leq1/2}2\sqrt{2\beta}\sqrt{1-2\beta}\frac{1}{\sqrt{\beta^{2}-1}}d\beta.
\]
Then as $\tau\to\infty$, we get that 
\[
I_{\tau}\asymp\tau^{2}I_{1}+I_{2}.
\]
}

To summarize, the above shows that $\text{vol}(B_{\tau})=c'\tau^{2}+O(\frac{1}{\tau}),$
whence 
\begin{equation}
N(\tau)=c\tau^{2}+O_{\eta}(\tau^{5/3+\eta}).\label{eq:lattice points in SL_2}
\end{equation}

\end{appendices}

\printbibliography[
heading=bibintoc,
title={Bibliography}
]
\end{document}